\documentclass{article}

\usepackage{graphicx}

\usepackage{authblk}

\usepackage{tikz}
\usetikzlibrary{arrows,shapes,matrix,
                decorations.pathmorphing,
                shapes.geometric,calc,
                positioning}

\tikzstyle{vertex}=[circle, draw, minimum size=5pt, line width=0.75pt,
					inner sep=0pt, outer sep=0pt]

\tikzstyle{blackV}=[circle, fill=black, minimum size=6pt,
					inner sep=0pt, outer sep=0pt]

\tikzstyle{edge}=[shorten <=1pt, shorten >=1pt, >=stealth, line width=1.1pt]

\tikzstyle{bentE}=[shorten <=1pt, shorten >=1pt, >=stealth, bend right=30,
          bend left=30, line width=1.1pt]

\usepackage{float}

\usepackage{xcolor}

\usepackage{amsmath}
\usepackage{amssymb}

\usepackage{amsthm}

\usepackage[colorlinks=true, linkcolor=blue, citecolor=blue, urlcolor= blue]{hyperref} 

\usepackage[capitalize]{cleveref}

\newtheorem{theorem}{Theorem}
\newtheorem{lemma}[theorem]{Lemma}
\newtheorem{corollary}[theorem]{Corollary}

\newtheorem{remark}[theorem]{Remark}
\newtheorem{conjecture}[theorem]{Conjecture}
\newtheorem{problem}[theorem]{Problem}

\newcommand{\set}[1]{\{ #1 \}}
\newcommand{\Deg}[1]{\ensuremath{\textnormal{d}( #1 )}}

\title{Polarity on $H$-split graphs%
\thanks{The authors gratefully acknowledge support from grant DGAPA-PAPIIT
IA101423}}

\author[1]{F.~Esteban~Contreras-Mendoza\thanks{esteban.contreras@ciencias.unam.mx}}
\author[1]{C\'esar~Hern\'andez-Cruz\thanks{chc@ciencias.unam.mx}}

\affil[1]{Facultad de Ciencias, Universidad Nacional Aut\'onoma de M\'exico, Av.
  Universidad 3000, Circuito Exterior S/N, C.P. 04510, Ciudad Universitaria,
  CDMX, M\'exico}

\begin{document}
\date{}

\maketitle
\begin{abstract}
  Given nonnegative integers, $s$ and $k$, an $(s,k)$-polar partition of a graph
  $G$ is a partition $(A,B)$ of $V_G$ such that $G[A]$ and $\overline{G[B]}$ are
  complete multipartite graphs with at most $s$ and $k$ parts, respectively. If
  $s$ or $k$ is replaced by $\infty$, it means that there is no restriction on
  the number of parts of $G[A]$ or $\overline{G[B]}$, respectively. A graph
  admitting a $(1,1)$-polar partition is usually called a split graph.

  In this work, we present some results related to $(s,k)$-polar partitions on
  two graph classes generalizing split graphs. Our main results include
  efficient algorithms to decide whether a graph on these classes admits an
  $(s,k)$-polar partition, as well as upper bounds for the order of minimal
  $(s,k)$-polar obstructions on such graph families for any $s$ and $k$ (even if
  $s$ or $k$ is $\infty$).
\end{abstract}

\section{Introduction}

All graphs in this work are finite and simple. In general we follow
\cite{bondySpringer2008}, although some notations can differ a little bit. We
use $G + H$ and $G \oplus H$ to denote the disjoint union and the join of the
graphs $G$ and $H$, respectively. Congruently, we use $nG$ to denote the
disjoint union of $n$ copies of a graph $G$. Two subsets $V$ and $W$ of the
vertex set of a graph $G$ are said to be \textit{completely adjacent} if $uw \in
E_G$ for every $u \in U$ and each $w \in W$, and they are \textit{completely
nonadjacent} if $uw \notin E_G$ for every $u \in U$ and each $w \in W$. For a
family $\mathcal{F}$ of graphs, we say that the graph $G$ is
\textit{$\mathcal{F}$-free} if $G$ does not have any graph in $\mathcal{F}$ as
an induced subgraph; if $\mathcal{F} = \{F\}$ we sat that $G$ is $F$-free
instead of $\{F\}$-free. A property of graphs is said to be \textit{hereditary}
if it is closed under induced subgraphs. A \textit{minimal
$\mathcal{P}$-obstruction} for a hereditary property $\mathcal{P}$ of graphs is
a graph that does not have the property $\mathcal{P}$, but such that every
vertex-deleted subgraph does.

A \textit{$(k, \ell)$-coloring} of a graph $G$ is a partition of $V_G$ in $k$
independent sets and $\ell$ cliques. A graph is said to be \textit{$(k,
\ell)$-colorable} if its vertex set admits a $(k,\ell)$-coloring. A
$(k,0)$-coloring of $G$ is called a (proper) \textit{$k$-coloring} of $G$, and
$G$ is said to be \textit{$k$-colorable} if it admits a $k$-coloring. The
minimum integer $k$ such that $G$ admits a $k$-coloring is the \textit{chromatic
number} of $G$, and it is denoted by $\chi(G)$. The minimum integer $\ell$ for
which $G$ has a $(0, \ell)$-coloring is denoted by $\theta(G)$, and it is called
the \textit{clique covering number} of $G$. A \textit{$z$-cocoloring} of $G$ is
any $(k, \ell)$-coloring of $G$ such that $k + \ell = z$. We use $\chi^c(G)$ to
denote the \textit{cochromatic number} of $G$, which is the minimum integer $z$
for which $G$ admits a $z$-cocoloring. A graph $G$ is said to be
\textit{$z$-bicolorable} if, for any integers $k$ and $\ell$ such that $k + \ell
= z$, $G$ is $(k, \ell)$-colorable. The \textit{bichromatic number} of $G$,
denoted $\chi^b(G)$, is the minimum integer $z$ such that $G$ is
$z$-bicolorable. Notice that, for any graph $G$,
\(
  \chi^c(G) \le \min\set{\chi(G), \theta(G)} \le \max\set{\chi(G), \theta(G)}
  \le \chi^b(G).
\)

Given two nonnegative integers $s$ and $k$, a partition $(A, B)$ of the vertex
set of a graph $G$ is called an \textit{$(s,k)$-polar partition} of $G$ if $A$
induces a complete $s$-partite graph and $B$ induces the disjoint union of at
most $k$ complete graphs. An \textit{$(s,k)$-polar graph} is a graph admitting
an $(s,k)$-polar partition. If $s$, $k$, or both of them, are replaced by
$\infty$, it means that there is no restriction on the number of parts of
$G[A]$, $\overline{G[B]}$, or both, respectively. A \textit{polar partition} is
an $(\infty, \infty)$-polar partition. A \textit{monopolar partition}
(respectively, a \textit{unipolar partition}) is a polar partition $(A,B)$ such
that $A$ is an independent set (respectively, a clique). Naturally, graphs
admitting polar, monopolar, or unipolar partitions are called polar, monopolar
and unipolar graphs, respectively.

The problems of deciding whether an arbitrary graph is polar or monopolar are
known to be NP-complete problems
\cite{chernyak1986recognizing,farrugia2004vertex}. In contrast, unipolar graphs
have been shown to be recognizable in $O(n^3)$-time \cite{churchley2014solving,
eschen2014algorithms}, and it was proven in \cite{feder2003list} that
$(s,k)$-polar graphs can be recognized in $O(|V|^{4+2\max\{s,k\}})$-time.

It is known that, for any pair of nonnegative integers $s$ and $k$, there is
just a finite number of minimal $(s,k)$-polar obstructions
\cite{feder2007matrix}. In spite of this, complete list of minimal $(s,k)$-polar
obstructions are known only for the cases $\min\{s,k\}=0$ and $s = k = 1$. A
graph is $(0,1)$-polar if and only if it is complete, so the only minimal
$(0,1)$-polar obstruction is $\overline{K_2}$. For an integer $k$, $k \ge 2$,
the $(0,k)$-polar graphs are called a \textit{$k$-clusters}, and they coincide
with the $\{P_3, (k+1)K_1\}$-free graphs. The $(0,\infty)$-polar graphs are
simply called \textit{clusters}, and they are the $P_3$-free graphs. Analogous
forbidden subgraph characterizations can be given for the case $k=0$ by
considering that the complement of an $(s,k)$-polar graph is a $(k,s)$-polar
graph.

The $(1,1)$-polar graphs are usually called \textit{split graphs}. The following
marvelous characterizations of split graphs were provided by Foldes, Hammer, and
Simeone.

\begin{theorem}[\cite{foldesSECGTC,hammerC1}] \label{theo:recognitionSplit}
    Let $G$ be a graph with vertex set $\set{v_1 ,v_2, \dots, v_n}$ and degree
    sequence $d_1 \ge d_2 \ge \dots \ge d_n$, where $d_i$ is the degree of
    vertex $v_i$. Set $p = \max\set{i : d_i \ge i-1}$. The following conditions
    are equivalent:
    \begin{enumerate}
        \item $G$ is a split graph;

        \item $G$ is a $\set{2K_2, C_4, C_5}$-free graph;

        \item \( \sum_{i=1}^p d_i= p(p-1) + \sum_{i=p+1}^n d_i. \)
    \end{enumerate}
    Additionally, if $G$ is a split graph, then $(\set{v_1, \dots, v_p},
    \set{v_{p+1}, \dots, v_n})$ is a split partition of $G$, $\omega(G) =
    \chi(G) = p$, and $\alpha(G) = \theta(G) = n-\min\set{p,d_p}$.
\end{theorem}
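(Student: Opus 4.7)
My plan is to establish $(1) \Rightarrow (2) \Rightarrow (1)$ and $(1) \Leftrightarrow (3)$, then extract the additional facts from the proof of $(1) \Rightarrow (3)$.

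For $(1) \Rightarrow (2)$, I fix a split partition $(K, I)$ of $G$ and note that each of $2K_2$, $C_4$, $C_5$ satisfies $\omega = \alpha = 2$, so an induced copy places at most two vertices in $K$ and two in $I$. This rules out $C_5$ immediately, and a short case check on the four-vertex distributions contradicts the specific edge/non-edge patterns of $2K_2$ and $C_4$. For $(2) \Rightarrow (1)$, I take $K$ to be a maximum clique of $G$, suppose for contradiction that there exist $u, v \in V_G \setminus K$ with $uv \in E_G$, and use maximality of $K$ to pick $a \in K \setminus N(u)$ and $b \in K \setminus N(v)$. I enumerate cases on whether $a = b$ and on the four truth values of $av, bu \in E_G$: when $a \ne b$ and both of $av, bu$ are absent, $G[\set{a,b,u,v}] = 2K_2$; when both are present, $G[\set{a,b,u,v}] = C_4$; the mixed subcases and the case $a = b$ are resolved by incorporating a third clique vertex to force an induced $C_5$. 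Each case contradicts (2), so $V_G \setminus K$ is independent.

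For $(1) \Rightarrow (3)$, let $\omega = \omega(G)$ and $K$ be a maximum clique. Every $w \in K$ has $\Deg{w} \ge \omega - 1$, while every $w \in V_G \setminus K$ has $\Deg{w} \le \omega - 1$ (the value $\omega$ would enlarge $K$), so after ordering by non-increasing degree we may take $\set{v_1, \dots, v_\omega} = K$ and conclude $p = \omega$. The double count $\sum_{v \in K} \Deg{v} = \omega(\omega - 1) + e(K, V_G \setminus K)$ together with $\sum_{v \notin K} \Deg{v} = e(K, V_G \setminus K)$ then gives (3). For $(3) \Rightarrow (1)$, I invoke the Erd\H{o}s--Gallai-type inequality
\[
\sum_{i=1}^{k} d_i \le k(k-1) + \sum_{i=k+1}^{n} \min\set{k, d_i},
\]
valid for every $k$, whose equality case forces $\set{v_1, \dots, v_k}$ to be a clique with every edge incident to $\set{v_{k+1}, \dots, v_n}$ going into it. Taking $k = p$ and using $\min\set{p, d_i} \le d_i$ together with the hypothesis forces equality in both steps, yielding the split partition $(\set{v_1, \dots, v_p}, \set{v_{p+1}, \dots, v_n})$.

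The extra parameters follow quickly: $\omega(G) = p$ is immediate from the previous paragraph; $\chi(G) = p$ by colouring $K$ with $p$ distinct colours and extending greedily to $V_G \setminus K$ (each outside vertex has at most $p - 1$ neighbours in $K$ by maximality); and $\alpha(G) = n - \min\set{p, d_p}$ by comparing the independent set $V_G \setminus K$ of size $n - p$ with the independent sets $\set{v} \cup ((V_G \setminus K) \setminus N(v))$ for $v \in K$, whose maximum size is $n - d_p$. The equality $\theta(G) = \alpha(G)$ then follows from perfection of split graphs, or directly by exhibiting a matching clique cover. The main obstacle I anticipate is the case analysis in $(2) \Rightarrow (1)$, especially the mixed subcases and $a = b$, where no forbidden subgraph appears on $\set{a, b, u, v}$ alone and one must carefully select a third clique vertex to exhibit $C_5$; a secondary delicate point is verifying that equality in the Erd\H{o}s--Gallai bound at $k = p$ truly extracts the split structure.
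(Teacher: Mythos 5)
Your outline of $(1)\Rightarrow(2)$, $(1)\Leftrightarrow(3)$ and the parameter computations is essentially sound, but the step $(2)\Rightarrow(1)$ contains a genuine gap: it is simply not true that an arbitrary maximum clique $K$ together with an edge $uv$ outside $K$ yields a forbidden induced subgraph, so no amount of ``incorporating a third clique vertex to force an induced $C_5$'' can close the mixed subcases. Concretely, take the split graph on $\set{a,b,c,u,v}$ with triangle $abc$ and additional edges $ua, ub, va, vu$. Here $\set{a,b,c}$ is a maximum clique, $uv$ is an edge outside it, your sets are $K\setminus N(u)=\set{c}$ and $K\setminus N(v)=\set{b,c}$ (a mixed subcase), and the graph contains no induced $2K_2$, $C_4$ or $C_5$ whatsoever --- it is split via the partition $(\set{a,b,u},\set{c,v})$. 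So in the subcase where some $a\in K$ is non-adjacent to both $u$ and $v$ while some $b\in K$ is adjacent to exactly one of them, the sought contradiction does not exist for that choice of $K$. The standard repair is an extremal choice: take $K$ to be a maximum clique that, among all maximum cliques, minimizes the number of edges induced by $V_G\setminus K$. One first disposes of the symmetric subcases as you do ($2K_2$ when two clique vertices miss both $u$ and $v$, or when $K\setminus N(u)=K\setminus N(v)$ has size at least $2$; $C_4$ when there are $a,b$ with $a$ missing only $u$ and $b$ missing only $v$; a larger clique when $K\setminus N(u)=K\setminus N(v)$ is a single vertex). In the remaining subcase $K\setminus N(u)=\set{a}\subsetneq K\setminus N(v)$ one shows that every neighbour of $a$ outside $K$ is also a neighbour of $u$ (otherwise a witness $w$ produces an induced $2K_2$, $C_5$ or $C_4$ depending on its adjacencies to $v$ and $b$), whence swapping $a$ for $u$ gives another maximum clique whose complement induces strictly fewer edges, contradicting the choice of $K$. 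Without this extremal choice and swap, the implication does not go through.

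A secondary, smaller issue is your appeal to the Erd\H{o}s--Gallai equality case in $(3)\Rightarrow(1)$: equality in $\sum_{i=1}^{k}d_i\le k(k-1)+\sum_{i=k+1}^{n}\min\set{k,d_i}$ does \emph{not} in general force $\set{v_1,\dots,v_k}$ to be a clique with $\set{v_{k+1},\dots,v_n}$ independent (e.g.\ $K_n$ attains equality for every $k$), so the claim must be proved specifically for $k=p$, using that $d_{p+1}\le p-1$; this is exactly the content of the Hammer--Simeone argument and needs to be carried out rather than cited as a generic equality characterization. Relatedly, in $(1)\Rightarrow(3)$ one should address the tie-breaking at degree $p-1$ (a vertex of the independent side with degree $p-1$ may precede a clique vertex of the same degree in the ordering), which is why the theorem's final claim that $(\set{v_1,\dots,v_p},\set{v_{p+1},\dots,v_n})$ is itself a split partition requires a short additional exchange argument.
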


Note that, once the degree sequence of a graph $G$ is known, computing the value
of $p$, as well as verifying the condition in item 3, can be done in
$O(|V|)$-time, so split graphs are recognizable and a split partition can be
found in linear time from their degree sequences.

Maffray and Preissmann \cite{maffrayDAM52} introduced the following
generalization of split graphs. Given a fixed graph $H$, a graph $G$ is said to
be \textit{$H$-split} if $V_G$ admits a partition $(C, S, I)$ such that $C$ is a
clique, $I$ is an independent set, either $S = \varnothing$ or $G[S] \cong H$,
$C$ is completely adjacent to $S$, and $I$ is completely nonadjacent to $S$. A
partition $(C, S, I)$ as described above is called an \textit{$H$-split
partition} of $G$. If $G = (C, S, I)$ is an $H$-split graph with $S \ne
\varnothing$, we say that $G$ is an \textit{strict $H$-split graph}. Given a
family of graphs $\mathcal{H}$, we say that $G$ is \textit{$\mathcal{H}$-split}
if it is $H$-split for some $H \in \mathcal{H}$. The next theorem implies that,
for a graph $H$ whose degree sequence is uniquely realizable, the class of
$H$-split graphs is recognizable and an $H$-split partition can be found in
$O(|V|)$-time from their degree sequences.

\begin{theorem}[\cite{maffrayDAM52}] \label{theo:charHsplit}%
    Let $d_1^\ast \ge \dots \ge d_h^\ast$ be a realizable degree sequence and
    let $\mathcal{H}$ be the class of all realizations of this sequence. Let $G$
    be a graph with degree sequence $d_1 \ge \dots \ge d_n$. If $q = \max \set{i
    : d_i \ge i-1 + h } \cup \set{0}$, then $G$ is an $\mathcal{H}$-split graph
    if and only if $G$ is split or
    \[ \sum_{i=1}^q d_i = q(q-1) +qh + \sum_{j=q+h+1}^n d_i \]
    and $d_{q+i} = q + d_i^\ast$ for each $i \in \set{1, \dots, h}$.
    Additionally, if the condition on the degrees holds, then the sets $C =
    \set{v_1, \dots, v_q}$, $S = \set{v_{q+1}, \dots, v_{q+h}}$ and $I =
    \set{v_{q+h+1}, \dots, v_n}$ conform an $\mathcal{H}$-partition of $G$ , the
    subgraph induced by $S$ being isomorphic to some graph $H \in \mathcal{H}$.
    Moreover, if $d^\ast$ is a uniquely realizable degree sequence, then
    $\omega(G) = q + \omega(H)$, $\chi(G) = q+\chi(H)$, $\alpha(G) = \alpha(H) +
    n-q-h$ and $\theta(G) = \theta(H) + n-q-h$.
\end{theorem}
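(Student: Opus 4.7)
The plan is to adapt the degree-sequence/double-counting proof of the Foldes-Hammer-Simeone characterisation (Theorem \ref{theo:recognitionSplit}), allowing the ``middle'' of a split partition to be replaced by $h$ vertices inducing a fixed subgraph. The split case ($S = \varnothing$) is handled by Theorem \ref{theo:recognitionSplit} itself, so throughout I focus on non-split $G$ and strict $\mathcal{H}$-split partitions. I write $e(X)$ for the number of edges inside $X$ and $e(X, Y)$ for the number of edges between disjoint sets $X, Y$.

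For sufficiency, assume the displayed degree identity and $d_{q+i} = q + d^*_i$ for $1 \le i \le h$, and set $C = \{v_1, \dots, v_q\}$, $S = \{v_{q+1}, \dots, v_{q+h}\}$, $I = \{v_{q+h+1}, \dots, v_n\}$. The obvious identity $\sum_{i=1}^q d_i = 2\,e(C) + e(C,S) + e(C,I)$, together with the trivial bounds $e(C) \le \binom{q}{2}$, $e(C,S) \le qh$, and $e(C,I) \le \sum_{j=q+h+1}^n d_j$ (since each $v_j \in I$ contributes at most $d_j$ edges, with equality iff every neighbour of $v_j$ lies in $C$), combines to the reverse of the hypothesised identity, so all three bounds are tight. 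This is exactly the statement that $C$ is a clique completely adjacent to $S$, and $I$ is independent and completely nonadjacent to $S$. Each $v_{q+i} \in S$ then satisfies $\deg_{G[S]}(v_{q+i}) = d_{q+i} - q = d^*_i$, so $G[S]$ realises $d^*$ and lies in $\mathcal{H}$.

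For necessity, take an $\mathcal{H}$-split partition $(C', S', I')$ with $S' \ne \varnothing$ and $|C'| = q'$. The elementary bounds $\deg(v) \ge q' - 1 + h$ on $C'$, $\deg(v) \le q' + h - 1$ on $S'$, and $\deg(v) \le q'$ on $I'$ imply that the $q'$ largest entries of the degree sequence are realised by $C'$ and force $q' = q$ (if $q > q'$, then $d_q \ge q - 1 + h$ contradicts the bound applicable to whichever of $S'$ or $I'$ contains $v_q$). After ordering vertices by non-increasing degree with ties broken in the order $C' \prec S' \prec I'$, we identify $C', S', I'$ with $\{v_1, \dots, v_q\}, \{v_{q+1}, \dots, v_{q+h}\}, \{v_{q+h+1}, \dots, v_n\}$, with the $S$-vertices listed by decreasing $\deg_H$ so that $d_{q+i} = q + d^*_i$; the displayed identity is the same edge count, now read from left to right. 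For the parameter formulas, uniqueness of $d^*$ gives $G[S] \cong H$ so that $G[C \cup S] = K_q \oplus H$; then the values of $\omega, \chi, \alpha, \theta$ follow from the observations that any clique of $G$ meets $I$ in at most one vertex and cannot meet $S$ and $I$ simultaneously, and that optimal colourings and clique covers of $H$ extend to $C$ and $I$ essentially for free.

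I expect the one subtle point to be the tie-breaking in the degree ordering: when a vertex of $H$ has degree $h - 1$ a vertex of $C'$ and a vertex of $S'$ attain the same degree value, and when $H$ has an isolated vertex a vertex of $S'$ and a vertex of $I'$ can coincide. Both ambiguities must be resolved in favour of the earlier block so that indices $1, \dots, q$ land in $C'$ and $q+1, \dots, q+h$ in $S'$; with this convention in place the rest of the argument is a routine translation of the classical split-graph proof.
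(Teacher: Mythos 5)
The paper does not prove this statement—it is quoted from Maffray and Preissmann \cite{maffrayDAM52}—so there is no in-paper proof to compare against. Your argument is correct and is the natural generalization of the Hammer--Simeone edge-counting proof of \Cref{theo:recognitionSplit}: the inequality $\sum_{i=1}^q d_i = 2e(C)+e(C,S)+e(C,I)\le q(q-1)+qh+\sum_{j=q+h+1}^n d_j$ holds for any graph, tightness forces exactly the $\mathcal{H}$-split structure, and for necessity the degree bounds $\deg\ge q'-1+h$ on $C'$, $\deg\le q'+h-1$ on $S'$, $\deg\le q'$ on $I'$ pin down $q'=q$ and make the three blocks realize the top $q$, middle $h$, and bottom $n-q-h$ entries of the sorted sequence as multisets (so the tie-breaking you flag affects only the labelling, not the numerical identities). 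Your treatment of the parameter formulas via ``a clique meets $I$ in at most one vertex and cannot meet both $S$ and $I$'' is also sound.
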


Notice that $H$-split graphs conform a hereditary class of graphs if and only if
either $H$ is a split graph, in which case $H$-split graphs coincide with split
graphs, or $H$ is one of the three minimal split obstructions mentioned in
\Cref{theo:recognitionSplit}, i.e., if $H \in \set{2K_2, C_4, C_5}$. The
following observation will be frequently used in this work without any explicit
mention.

\begin{remark}
    Let $H$ be some of $2K_2, C_4$, or $C_5$, and let $G = (C, S, I)$ be a
    strict $H$-split graph. Then, the only induced copy of $H$ in $G$ is $G[S]$
    and the $H$-split partition of $G$ is unique.
\end{remark}

The class of $H$-split graphs is self-complementary if and only if $H$ is. From
the above observations, it is not strange that the most studied $H$-split graphs
are the $C_5$-split graphs which were named \textit{pseudo-split graphs} in
\cite{maffrayDAM52}. Naturally, a $C_5$-split partition of a graph is also
called a \textit{pseudo-split partition} and, since pseudo-split graphs are
strict if and only if they are perfect, strict pseudo-split graphs are called
\textit{imperfect}. Additionally to the characterization of pseudo-split graphs
by their degree sequences provided by \Cref{theo:charHsplit}, Maffray and
Preissmann \cite{maffrayDAM52} gave the complete list of minimal pseudo-split
obstructions, namely $\set{2K_2, C_4}$. The following proposition summarize such
characterizations to facilitate future references.

\begin{theorem}[\cite{maffrayDAM52}] \label{theo:recognitionIPS}
    Let $G$ be a graph of order at least five with vertex set $\set{v_1, v_2,
    \dots, v_n}$ and degree sequence $d_1 \ge d_2 \ge \dots \ge d_n$, where
    $d_i$ is the degree of vertex $v_i$. Set $q = \max \set{i : d_i \ge i+4}
    \cup \set{0}$. The following conditions are equivalent:
    \begin{enumerate}
        \item $G$ is an imperfect pseudo-split graph;

        \item $G$ is a $\set{2K_2, C_4}$-free graph that has an induced $C_5$;

        \item \( \sum_{i=1}^q d_i= q(q+4) + \sum_{i=q+6}^n d_i\), and
        $d_j = q+2$ whenever $q+1 \le j \le q+5$.
    \end{enumerate}
    Additionally, if $G$ is an imperfect pseudo-split graph, then
    \[ (\set{v_1, v_2, \dots, v_q}, \set{v_{q+1},v_{q+2},v_{q+3},v_{q+4},v_{q+5}},
    \set{v_{q+6}, v_{q+7}, \dots, v_n}) \]
    is the pseudo-split partition of $G$, $\omega(G) = q+2$, $\chi(G) =
    q+3$, $\alpha(G) = n-q-3$ and $\theta(G) = n-q-2$.
\end{theorem}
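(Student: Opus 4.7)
The plan is to prove the theorem by specializing the two results already quoted in the excerpt: the minimal pseudo-split obstructions are $\set{2K_2, C_4\}$ (attributed to Maffray and Preissmann), and the degree-sequence characterization of $\mathcal{H}$-split graphs in \Cref{theo:charHsplit}. The whole argument should reduce to bookkeeping, with no new combinatorics required.

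For the equivalence (1)$\Leftrightarrow$(2), I would argue as follows. By definition, (1) means $G$ is a strict $C_5$-split graph, so $G[S]\cong C_5$ provides an induced $C_5$, and being pseudo-split means $G$ is $\set{2K_2, C_4}$-free by the obstruction list. Conversely, if $G$ is $\set{2K_2, C_4}$-free and has an induced $C_5$, then $G$ is pseudo-split by the same obstruction list; moreover, by \Cref{theo:recognitionSplit}, a split graph is $C_5$-free, so $G$ cannot be split, and hence its pseudo-split partition $(C,S,I)$ must have $S\neq\varnothing$, i.e.\ $G$ is imperfect.

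For (1)$\Leftrightarrow$(3) and the additional part, the plan is to specialize \Cref{theo:charHsplit} to the uniquely-realizable degree sequence $d^{\ast}_1=\cdots=d^{\ast}_5=2$, whose unique realization is $H=C_5$ with $h=5$. Substituting $h=5$ in that theorem gives $q=\max\set{i:d_i\ge i+4}\cup\set{0}$, the term $q(q-1)+qh$ collapses to $q(q+4)$, and the conditions $d_{q+i}=q+d^{\ast}_i$ become $d_{q+1}=\cdots=d_{q+5}=q+2$; these are exactly the equations in (3). The ``split or \dots'' disjunction in \Cref{theo:charHsplit} is resolved using the already-established (1)$\Leftrightarrow$(2): any $G$ satisfying (3) must contain an induced $C_5$, so it is not split, and hence (3) characterizes imperfect pseudo-split graphs exactly. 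The partition description and the values $\omega(G)=q+2$, $\chi(G)=q+3$, $\alpha(G)=n-q-3$, $\theta(G)=n-q-2$ follow directly from the ``moreover'' clause of \Cref{theo:charHsplit} by plugging in $\omega(C_5)=2$, $\chi(C_5)=3$, $\alpha(C_5)=2$, and $\theta(C_5)=3$.

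The main obstacle I anticipate is the degree-ordering bookkeeping hidden in \Cref{theo:charHsplit}: I need to verify that for a strict $C_5$-split graph $G$ with partition $(C,S,I)$ and $|C|=c$, every vertex of $C$ has degree at least $c+4$, every vertex of $S$ has degree exactly $c+2$, and every vertex of $I$ has degree at most $c$, so that a nonincreasing reordering of the degree sequence places the vertices of $C$, $S$, and $I$ in consecutive blocks and the definition of $q$ in (3) recovers $c$. This is a short computation using the complete adjacency of $C$ and $S$ and the complete nonadjacency of $I$ and $S$, but it is the only place where one must be careful, since the rest of the argument is a direct substitution into results quoted above.
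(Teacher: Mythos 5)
Your derivation is correct, but there is nothing in the paper to compare it against: \Cref{theo:recognitionIPS} is stated as a quoted result from Maffray and Preissmann \cite{maffrayDAM52} and the paper gives no proof of it. What you propose is a legitimate reconstruction from the other two results the paper quotes. The specialization of \Cref{theo:charHsplit} to the degree sequence $(2,2,2,2,2)$ is sound (a $2$-regular graph on five vertices must be a single cycle, so the sequence is uniquely realizable as $C_5$, and the substitutions $h=5$, $d_i^\ast=2$ give exactly the formulas in item 3, including $q(q-1)+5q=q(q+4)$ and the parameter values from $\omega(C_5)=2$, $\chi(C_5)=3$, $\alpha(C_5)=2$, $\theta(C_5)=3$). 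The resolution of the ``split or \dots'' disjunction is also handled correctly: an imperfect pseudo-split graph contains an induced $C_5$ and so is not split by \Cref{theo:recognitionSplit}, which forces the degree conditions in the forward direction, while in the converse direction the ``additionally'' clause of \Cref{theo:charHsplit} exhibits $G[S]\cong C_5$ and hence strictness. The bookkeeping step you flag --- that in a strict $C_5$-split partition $(C,S,I)$ with $|C|=c$ every vertex of $C$ has degree at least $c+4$, every vertex of $S$ has degree exactly $c+2$, and every vertex of $I$ has degree at most $c$, so the nonincreasing ordering blocks the parts consecutively and $q=c$ --- is indeed the only computation needed, and it follows immediately from the complete adjacency of $C$ to $S$ and the complete nonadjacency of $I$ to $S$ (the paper records the same facts in \Cref{rem:degrees}, there deduced from the theorem you are proving, so your independent verification avoids any circularity). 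No gaps.
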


In this work, we study $(s,k)$-polar partitions on pseudo-, $2K_2$- and
$C_4$-split graphs. Polarity on pseudo-split graphs is treated in \Cref{sec:PS},
where we provide finite lists of minimal obstructions for the main polar
properties, and give linear-time algorithms to recognize such properties on
pseudo-split graphs from their degree sequences; at the end of the section we
study $(k, \ell)$-colorings of pseudo-split graphs. Results about polarity in
$2K_2$-split graphs that are analogous to those given in \Cref{sec:PS} for
pseudo-split graphs are developed in \Cref{chap:2K2split}. We show that, among
other differences, $2K_2$-split graphs that are $(s,k)$-polar cannot be
recognized from their degree sequence as pseudo-split $(s,k)$-polar graphs do,
but they are still efficiently recognizable. Since $C_4$-split graphs are the
complements of $2K_2$-split graphs, analogous results are deduced for these
graphs. Finally, in \Cref{sec:concl} we pose some open problems and conjectures.

\section{Polarity on pseudo-split graphs} \label{sec:PS}

As we have observed before, split graphs are precisely the $(1,1)$-polar graphs,
so they trivially are polar, monopolar, unipolar, and $(s, k)$-polar for any
positive integers $s$ and $k$. In this section we study polarity on pseudo-split
graphs. As our main results we give complete lists of pseudo-split minimal $(s,
k)$-polar obstructions for the cases $\min \set{s, k} \le 2$, $s = \infty$, and
$k = \infty$, we prove tight upper bounds for the order of pseudo-split minimal
$(s, k)$-polar obstruction, and provide $O(|V|)$-time algorithms to decide
whether a pseudo-split graph is $(s, k)$-polar from its degree sequence.

The next observation is basic to obtain $O(|V|)$-time recognition algorithms for
$(s, k)$-polarity on pseudo-split graphs; it follows directly from
\Cref{theo:recognitionIPS}.

\begin{remark} \label{rem:degrees}
    Let $G = (C, S, I)$ be an imperfect pseudo-split graph, and let $c=|C|$ and
    $i=|I|$. If $u \in V_G$, then $u \in C$ if and only if $\Deg{u} \ge c+4$, $u
    \in S$ if and only if $\Deg{u} = c+2$, and $u \in I$ if and only if $\Deg{u}
    \le c$. Moreover, $\Deg{u} = c+4$ if and only if $u$ is a vertex in $C$ that
    is completely nonadjacent to $I$, and $\Deg{u} = c$ if and only if $u$ is a
    vertex in $I$ that is completely adjacent to $C$. 
\end{remark}

If $G = (C, S, I)$ is a pseudo-split graph and $uv$ is an edge in $G[S]$, then
$(C \cup S \setminus \set{u,v}, I \cup \set{u,v})$ is a polar partition of $G$.
Hence, pseudo-split graphs are polar. In addition, since split graphs are
precisely the $(1,1)$-polar graphs, for every pair of positive integers $s$ and $k$,
any pseudo-split minimal $(s, k)$-polar obstruction is necessarily imperfect.
Moreover, since split graphs are monopolar and unipolar, every pseudo-split
minimal monopolar (unipolar) obstruction is also imperfect. Considering above
observations, it seems natural to ask about the polar partitions of $C_5$.
Notice that a $5$-cycle admits only two essentially different polar partitions,
which are depicted in \Cref{fig:polarPartitionsOfC5}.

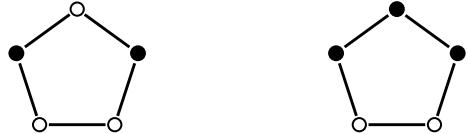
\begin{figure}[!ht]
\centering
\begin{tikzpicture}

    \begin{scope}[scale=0.85]
    
        \begin{scope}[scale=1]
            \foreach \i in {0,2,3}
                \node [vertex] (\i) at ($ (0,2) + ({90+(\i*72)}:1) $)[]{};

            \foreach \i in {1,4}
                \node [blackV] (\i) at ($ (0,2) + ({90+(\i*72)}:1) $)[]{};

            \foreach \i in {0,...,4}
                \draw let \n1={int(mod(\i+1,5))} in [edge] (\i) to node [above] {} (\n1);
        \end{scope}
        
        \begin{scope}[scale=1, xshift=5cm]
            \foreach \i in {2,3}
                \node [vertex] (\i) at ($ (0,2) + ({90+(\i*72)}:1) $)[]{};

            \foreach \i in {0,1,4}
                \node [blackV] (\i) at ($ (0,2) + ({90+(\i*72)}:1) $)[]{};

            \foreach \i in {0,...,4}
                \draw let \n1={int(mod(\i+1,5))} in [edge] (\i) to node [above] {} (\n1);
        \end{scope}
        
    \end{scope}
    
\end{tikzpicture}

\caption{The only two polar partitions of a $5$-cycle. Shaded vertices
induce complete multipartite graphs, while white vertices induce clusters.}
\label{fig:polarPartitionsOfC5}
\end{figure}

Observe that, if an imperfect pseudo-split graph $G = (C, S, I)$ has a polar
partition $(A, B)$, then $(A, B)$ must inherit either a $(1,2)$- or a
$(2,1)$-polar partition to $G[S]$. In the first case, since $G[B]$ is a
$P_3$-free graph and $C$ is completely adjacent to $S$, we have that $C \cap B$
must be an empty set, so $C \subseteq A$. Analogously, when $G[S]$ inherits a
$(2,1)$-polar partition from $(A, B)$, we have that $I \subseteq B$, because $I$
is completely nonadjacent to $S$ and $A$ induces a $\overline{P_3}$-free graph.
These observations are going to be used without any explicit mention in most of
the proofs of this section.

\subsection{Algorithms for polarity on pseudo-split graphs}

In the following theorem we give a necessary and sufficient condition for a
pseudo-split graph to be $(s, \infty)$-polar. Notice that such a condition can
be verified in $O(|V|)$-time from the degree sequence of a graph. Additionally,
since the class of pseudo-split graphs is self-complementary, and a graph is
$(s, \infty)$-polar if and only if its complement is $(\infty, s)$-polar, we
have that by a simple argument of complements an analogous characterization can
be given for $(\infty, k)$-polarity on pseudo-split graphs.

\begin{theorem} \label{thm:psSinfty}
    Let $s$ be a nonnegative integer, and let $G$ be an imperfect pseudo-split
    graph with pseudo-split partition $(C, S, I)$. Then, $G$ is an $(s,
    \infty)$-polar graph if and only either $s > |C|$, or $|C| \ge s \ge 2$ and
    there are at least $|C|-s+2$ vertices of $G$ with degree exactly $|C|+4$.
\end{theorem}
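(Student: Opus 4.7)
My plan is to split into the two cases corresponding to the two essentially different polar partitions of the induced $C_5$ on $S$, shown in \Cref{fig:polarPartitionsOfC5}. By the observations preceding the theorem, any polar partition $(A,B)$ of $G$ restricts on $S$ either to a $(1,2)$-polar partition of $C_5$ (so $C \subseteq A$ and $A \cap S$ is an independent pair), or to a $(2,1)$-polar partition (so $I \subseteq B$ and $A \cap S$ is a $P_3$ in $S$). I prove the two disjuncts of the statement by characterizing realizability in each case.

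In the first case, a part-counting argument handles everything. Since $C$ is a clique completely adjacent to $A \cap S$, each vertex of $C$ must occupy its own part of $G[A]$, and none of these parts can contain a vertex of $A \cap S$. Therefore $G[A]$ has at least $|C|+1$ parts, forcing $s \ge |C|+1$. Conversely, when $s > |C|$ the explicit partition $A := C \cup \{u_1,u_2\}$ with $\{u_1,u_2\}$ an independent pair in $S$, and $B := V_G \setminus A$, works: $G[A]$ is complete $(|C|+1)$-partite, while $G[B]$ is $K_2 + K_1$ (from $S$) together with isolated vertices of $I$, which is a cluster. This yields the first disjunct.

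The second case contains the technical heart of the proof. The key observation is that every $c \in B \cap C$ must be completely nonadjacent to $I$. Indeed, $(B \cap C) \cup (B \cap S)$ is a clique in $G$ that already contains the $K_2$ from $B \cap S$, so it sits inside a single cluster component of $G[B]$; a neighbor of $c$ in $I \subseteq B$ would then land in that same component and hence be adjacent to $B \cap S$, contradicting the complete nonadjacency of $I$ and $S$. By \Cref{rem:degrees} these are exactly the vertices of $G$ of degree $|C|+4$. Counting parts, $G[A]$ has $|C|-|B \cap C|$ singletons from $C \cap A$ plus the $2$ parts of $A \cap S = K_{1,2}$, so realizability reduces to $|C|-|B \cap C|+2 \le s$, i.e., $|B \cap C| \ge |C|-s+2$ together with $s \ge 2$. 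Conversely, assuming at least $|C|-s+2$ degree-$(|C|+4)$ vertices exist and $s \ge 2$, any choice of $|C|-s+2$ of them can serve as $B \cap C$, and the construction is reversed to produce a valid partition. Combining the two cases gives the equivalence; the regime $s > |C|$ of Case 2 is already covered by the first disjunct, which is why the theorem may restrict the second disjunct to $|C| \ge s$. The $P_3$-free / cluster-component argument identifying which vertices of $C$ are admissible in $B$ is the only nontrivial step; everything else is counting parts and unpacking \Cref{rem:degrees}.
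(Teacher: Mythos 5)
Your proposal is correct and follows essentially the same route as the paper: the same two-case split according to which polar partition of $C_5$ is inherited by $S$, the same key observation that every vertex of $C \cap B$ must be completely nonadjacent to $I$ (your cluster-component phrasing is equivalent to the paper's ``$G[B]$ would contain an induced $P_3$'' argument) combined with \Cref{rem:degrees}, and the same explicit partitions for the converse. The only cosmetic difference is organizational: the paper first assumes $s \le |C|$ to rule out the $(1,2)$ case in the forward direction, whereas you characterize realizability of each case by an equivalence and then take the disjunction.
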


\begin{proof}
    Let us denote $|C|$ by $c$. Suppose that $G$ is an $(s, \infty)$-polar
    graph, with polar partition $(A, B)$, such that $s \le c$. Observe that, if
    the restriction of $(A, B)$ to $S$ is a $(1,2)$-polar partition, then $C
    \cap B = \varnothing$, or $G[B]$ has $P_3$ as an induced subgraph, but then
    $C \subseteq A$, which is impossible since $G[A]$ would have $K_{c+1}$ as an
    induced subgraph and $s \le c$. Thus, $G[S]$ is covered by a $(2,1)$-polar
    partition, so $s \ge 2$. Notice that $I \subseteq B$, otherwise
    $\overline{P_3}$ would be an induced subgraph of $G[A]$, which cannot occur.

    Then, since $A$ induces a complete $s$-partite graph, at most $s-2$ vertices
    of $C$ belong to $A$. It implies that there is a subset $C'$ of $C \cap B$
    with at least $c-s+2$ vertices. Moreover, if there exist adjacent vertices
    $c \in C'$ and $i \in I$, then $G[B]$ would have $P_3$ as an induced
    subgraph, which is impossible, so $C'$ is completely nonadjacent to $I$.
    Hence, by \Cref{rem:degrees}, $G$ has at least $c-s+2$ vertices of degree
    $c+4$.
    
    For the converse implication, let $A$ and $B$ be a maximum independent set
    and a maximum clique in $G[S]$, respectively. If $s > c$, then $(C \cup A, I
    \cup S \setminus A)$ is an $(s, \infty)$-polar partition of $G$. Otherwise,
    we have that $c \ge s$ and there are at least $c-s+2$ vertices of $G$ with
    degree exactly $c+4$. But then, if $C'$ is the subset of $C$ consisting of
    the vertices of degree $c+4$, we have by \Cref{rem:degrees} that $((S
    \setminus B) \cup (C \setminus C'), I \cup C' \cup S \setminus B)$ is an
    $(s, \infty)$-polar partition of $G$.
\end{proof}

Now, we present a necessary and sufficient condition for a pseudo-split graph to
be $(s, k)$-polar. Once again, this condition can be verified in $O(|V|)$-time
from the degree sequence of a graph, so it implies that $(s, k)$-polarity can be
efficiently decided on pseudo-split graphs.

\begin{theorem} \label{theo:DegreeChar(sk)polIPS}
    Let $G$ be an imperfect pseudo-split graph with pseudo-split partition $(C,
    S, I)$, and let $c$ and $i$ be the cardinalities of $C$ and $I$,
    respectively. Let $M_C$ be the number of vertices of $G$ whose degree is
    exactly $c+4$, and $M_I$ be the number of vertices of $G$ whose degree is
    exactly $c$. Let $s$ and $k$ be nonnegative integers such that $s+k \ge 1$.
    Then, $G$ is an $(s, k)$-polar graph if and only if either
    \begin{enumerate}
        \item $k \ge i+1$ and $s \ge c-M_C+2$, or
        \item $s \ge c+1$ and $k \ge i-M_I+2$.
    \end{enumerate}
\end{theorem}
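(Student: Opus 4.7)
My plan is to use the fact, established in the paragraph preceding the theorem, that any polar partition $(A,B)$ of $G$ restricts on $S\cong C_5$ to exactly one of the two partitions depicted in \Cref{fig:polarPartitionsOfC5}: either the $(1,2)$-polar partition (with $A\cap S$ a non-adjacent pair of $C_5$ and $B\cap S$ inducing $K_1+K_2$) or the $(2,1)$-polar partition (with $A\cap S$ inducing $P_3$ and $B\cap S\cong K_2$). I would show that the first type of restriction forces condition~2 of the statement and the second forces condition~1, and then reverse each argument for sufficiency.

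Suppose the restriction to $S$ is of the first type. The preliminary remarks force $C\subseteq A$. Any $v\in A\cap I$ is non-adjacent to both vertices of $A\cap S$ (because $I$ is completely non-adjacent to $S$) and so must share their part in the complete multipartite graph $G[A]$; consequently $v$ is adjacent to every vertex of $C$, which by \Cref{rem:degrees} forces $\deg(v)=c$. Therefore $G[A]$ uses exactly $c+1$ parts (one per vertex of $C$, plus the part containing $A\cap S$), giving $s\ge c+1$, and $A\cap I$ is a subset of the $M_I$ degree-$c$ vertices of $I$. On the $B$ side, $G[B]$ is the disjoint union of the two cliques from $B\cap S$ and the singletons in $B\cap I$ (non-adjacent to $B\cap S$ and pairwise non-adjacent), so it has $2+|B\cap I|$ components. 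The bound $k\ge 2+|B\cap I|\ge 2+(i-M_I)$ then yields condition~2. The other case is handled by an entirely dual argument: it forces $I\subseteq B$ and constrains $C\cap B$ to lie among the $M_C$ degree-$(c+4)$ vertices of $C$ (otherwise a vertex of $I$ adjacent to some $u\in C\cap B$ would form an induced $P_3$ with $B\cap S$ in $G[B]$), which produces condition~1.

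For the converse, condition~2 yields an explicit $(s,k)$-polar partition by taking $A=C\cup\{u,w\}\cup J$ and $B=(S\setminus\{u,w\})\cup(I\setminus J)$, where $\{u,w\}$ is a non-adjacent pair in $S$ and $J$ is any subset of the degree-$c$ vertices of $I$ with $|J|\ge i-k+2$ (such a $J$ exists because $M_I\ge i-k+2$). A routine verification shows that $G[A]$ is complete multipartite with $c+1\le s$ parts and $G[B]$ is a cluster with $2+|I\setminus J|\le k$ components. Condition~1 is handled by the dual construction. The delicate step throughout is extracting the degree conditions on $A\cap I$ and $C\cap B$ from the multipartite and cluster structure, respectively, since that is exactly what brings $M_I$ and $M_C$ into play; once these are pinned down, the rest of the argument is bookkeeping about parts and cliques.
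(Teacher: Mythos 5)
Your proposal is correct and follows essentially the same route as the paper: both arguments case-split on whether the $(s,k)$-polar partition restricts to $S$ as the $(1,2)$- or the $(2,1)$-polar partition of $C_5$, use \Cref{rem:degrees} to translate ``completely adjacent to $C$'' and ``completely nonadjacent to $I$'' into the degree conditions defining $M_I$ and $M_C$, and prove sufficiency via the same explicit partitions. Your write-up is in fact slightly more detailed than the paper's in the $(1,2)$ case, which the paper dispatches by symmetry.
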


\begin{proof}
    First, let us suppose that $G$ admits an $(s, k)$-polar partition $(A, B)$.
    There are two possible cases, either $G[S]$ inherits a $(2, 1)$-polar
    partition from $(A,B)$, or it inherits a $(1, 2)$-polar partition (see
    \Cref{fig:polarPartitionsOfC5}). We will show that in the first case, $k \ge
    i+1$ and $s \ge c-M_C-2$, while in the latter case $s \ge c+1$ and $k \ge
    i-M_I+2$.

    Thus, suppose that $G[S]$ inherits a $(2,1)$-polar partition from $(A, B)$.
    Note that in such a case $I \subseteq B$, otherwise $G[A]$ would have
    $\overline{P_3}$ as an induced subgraph, which is impossible. Moreover,
    since $G[B]$ is a $P_3$-free graph, we have that every vertex $v \in C \cap
    B$ is completely nonadjacent to $I$, and then, by \Cref{rem:degrees},
    $|C\cap B| \le M_C$. Thus, it occurs that
    \[ |C\cap A| = |C|-|C\cap B| \ge c-M_C, \]
    where we conclude that $s \ge c-M_C+2$. Furthermore, in this case $G[B]$ is
    a cluster with exactly $I + 1$ components, so $k \ge i+1$. Hence, we have
    proved that, if $G[S]$ inherits a $(2, 1)$-polar partition from $(A, B)$,
    then $k \ge i+1$ and $s \ge c-M_C+2$. It can be proved in a similar way that
    $s \ge c+1$ and $k \ge i-M_I+2$ whenever $G[S]$ inherits a $(1, 2)$-polar
    partition from $(A, B)$.

    Conversely, let assume that $k \ge i+1$ and $s\ge c-M_C+2$. By definition of
    $M_C$ and \Cref{rem:degrees}, there exists a subset $C'$ of $C$ of
    cardinality $M_C$ that is completely nonadjacent to $I$. Let $B_1$ be a set
    of two adjacent vertices of $G[S]$, and let $A_1 = S \setminus B_1$. Then,
    we have that $(A_1 \cup C \setminus C', B_1 \cup I \cup C')$ is a $(c-M_C+2,
    i+1)$-polar partition of $G$, so $G$ is an $(s, k)$-polar graph, as we had
    to prove. The result follows analogously if we assume that $s \ge c+1$ and
    $k \ge i-M_I+2$, only taking a $(1, 2)$-polar partition $(A_1, B_1)$ of
    $G[S]$ instead of a $(2, 1)$-polar partition.
\end{proof}

Monopolarity and unipolarity also can be decided in $O(|V|)$-time from the
degree sequence of a pseudo-split graph, but that will be deduced as an
immediate consequence of the forbidden subgraph characterizations that we
present next.

Now, we present some results about pseudo-split minimal $(s, k)$-polar
obstructions, which include tight upper bounds for the order of such graphs, as
well as complete lists of minimal forbidden induced subgraphs for some values of
$s$ and $k$.

\subsection{Pseudo-split minimal $(s, k)$-polar obstructions}

Monopolar an unipolar pseudo-split graphs admit a very simple characterization
by forbidden induced subgraphs which we summarize in the following proposition.

\begin{theorem} \label{theo:charIPSobminMonopUnip}
    Let $G$ be a pseudo-split graph. Then,
    \begin{enumerate}
        \item $G$ is a minimal monopolar obstruction if and only if $G \cong K_1
        \oplus C_5$, and

        \item $G$ is a minimal unipolar obstruction if and only if $G \cong
        C_5$.
    \end{enumerate}
    In consequence, the problems of deciding whether a pseudo-split graph is
    monopolar or unipolar are solvable in $O(|V|)$-time from its degree
    sequence.
\end{theorem}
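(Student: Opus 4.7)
The plan is to handle each of the two items by the same two-step scheme. First, verify directly that the named graph is indeed a minimal obstruction to the relevant property (both non-monopolar/non-unipolar and with every vertex-deleted subgraph having the property). Second, use the fact that monopolarity and unipolarity are hereditary properties, so any other pseudo-split obstruction would have to contain the named graph as an induced subgraph, and then a minimality count bounds the number of vertices. Throughout I invoke \Cref{theo:recognitionIPS} to reduce to the imperfect case: split graphs are $(1,1)$-polar and hence both monopolar and unipolar, so any pseudo-split minimal obstruction admits an imperfect pseudo-split partition $(C,S,I)$ with $G[S]\cong C_5$.

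For unipolarity, I first argue $C_5$ is not unipolar: since $\omega(C_5)=2$, any clique part $A$ has $|A|\le 2$, and a short inspection of the cases $|A|\in\{0,1,2\}$ shows the complementary set always contains an induced $P_3$. Since $C_5-v\cong P_4$ splits into a middle-edge clique and a two-vertex cluster, $C_5$ is a minimal unipolar obstruction. Conversely, a pseudo-split minimal unipolar obstruction $G$ is imperfect, hence contains an induced $C_5$; if $|V_G|>5$ then deleting any vertex outside that $C_5$ leaves an induced $C_5$ in $G-v$, contradicting unipolarity of $G-v$. Thus $G\cong C_5$.

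For monopolarity the candidate is $K_1\oplus C_5$, and I would verify non-monopolarity by a case split on the universal vertex $w$: if $w$ lies in the independent part, the cluster part is $C_5$, which fails to be a cluster; if $w$ lies in the cluster part, then the $B$-component of $w$ is a clique of $G$ containing $w$, and $\omega(C_5)=2$ forces $|B|\le 3$, leaving at least three vertices of $C_5$ to be placed independently in $A$, contradicting $\alpha(C_5)=2$. Minimality follows from two checks: $(K_1\oplus C_5)-w\cong C_5$ is monopolar via $A=\{v_1,v_3\}$ and $B=\{v_2,v_4,v_5\}\cong K_1+K_2$; and $(K_1\oplus C_5)-v_i\cong K_1\oplus P_4$ is monopolar by putting the two $P_4$-endpoints in $A$ and the middle $P_4$-edge together with $w$ in $B$, which induces a triangle.

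The converse for monopolarity is where the pseudo-split structure does real work. Given a pseudo-split minimal monopolar obstruction $G=(C,S,I)$, the case $C=\varnothing$ gives $G\cong C_5+\overline{K_{|I|}}$ (as $I$ is completely nonadjacent to $S$), and then $A=I\cup\{v_1,v_3\}$ against $B=\{v_2,v_4,v_5\}$ is a monopolar partition, a contradiction. Hence $C\neq\varnothing$, and for any $c\in C$ the set $\{c\}\cup S$ induces $K_1\oplus C_5$ in $G$; heredity of non-monopolarity together with minimality forces $V_G=\{c\}\cup S$, so $G\cong K_1\oplus C_5$. The algorithmic statement is then immediate: \Cref{theo:recognitionSplit} and \Cref{theo:recognitionIPS} let us read off from the degree sequence in $O(|V|)$ time whether $G$ is split (in which case $G$ is monopolar and unipolar) or imperfect pseudo-split together with the sizes of $C$, $S$, $I$; in the imperfect case $G$ is never unipolar, and it is monopolar precisely when $|C|=0$. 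The one step demanding care is the case split verifying non-monopolarity of $K_1\oplus C_5$, but it is compact.
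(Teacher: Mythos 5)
Your proposal is correct and follows essentially the same route as the paper: verify directly that $K_1\oplus C_5$ (resp.\ $C_5$) is a minimal monopolar (resp.\ unipolar) obstruction, use the pseudo-split structure to show that any other minimal obstruction would be imperfect with $C\neq\varnothing$ and hence properly contain $K_1\oplus C_5$ (resp.\ properly contain $C_5$), and then read off the $O(|V|)$-time tests from \Cref{theo:recognitionSplit,theo:recognitionIPS}. The only difference is that you spell out the case analyses the paper dismisses as routine.
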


\begin{proof}
    It is a routine to show that $K_1 \oplus C_5$ is a minimal monopolar
    obstruction. Moreover, if $G$ has pseudo-split partition $(C, S, I)$, and
    $G$ does not have $K_1 \oplus C_5$ as an induced subgraph, then either $G$
    is a split graph, or $G$ is an imperfect pseudo-split graph with $C =
    \varnothing$. In the first case, $G$ trivially is a monopolar graph, while
    in the second case $G$ is isomorphic to $nK_1+C_5$ for some nonnegative
    integer $n$, and therefore, it is a monopolar graph. The proof of item 2 is
    similar and even simpler.

    By item 1, a pseudo-split graph $G = (C, S, I)$ is monopolar if and only if
    either $S = \varnothing$ or $C = \varnothing$. Thus, it follows from
    \Cref{theo:recognitionSplit,theo:recognitionIPS} that deciding whether a
    pseudo-split graph is monopolar can be done in $O(|V|)$-time from its degree
    sequence. Analogously, by item 2, a pseudo-split graph is unipolar if and
    only if it is split, so in this case the result follows from
    \Cref{theo:recognitionSplit}.
\end{proof}

As we mentioned before, minimal $(s, k)$-polar obstructions on general graphs
are known only for the cases $\min\set{s, k} = 0$, and $s = k =1$, which
correspond to clusters, complete multipartite graphs, and split graphs.  In the
following proposition we give complete lists of pseudo-split minimal $(s,
k)$-polar obstructions for the case $s \in \set{1, 2}$, which can be
extrapolated to case $k \in \set{1,2}$ by simple arguments of complements.
Before presenting such results, we introduce notation for some particular
graphs.

For each positive integer $s$, let us denote by $G_s^0$ the imperfect
pseudo-split graph whose $(C, S, I)$-partition satisfies that $|C| = s$, $I =
1$, and $C$ is completely adjacent to $I$. We will also use $G_s^1$ to denote
the graph obtained from $G_s^0$ by deleting one edge incident with the only
vertex of $I$. Notice that, by \Cref{theo:DegreeChar(sk)polIPS}, for any
integers $s, k \ge 2$, the graphs $G_s^0$ and $G_s^1$ are minimal $(s, k)$-polar
obstructions.

For positive integers $s$ and $k$, with $k \ge s$, let $H_s^k = (C, S, I)$ be
the imperfect pseudo-split graph such that $|C| = s-1$, $|I| = k-1$ and, for an
injection $f \colon C \to I$, a vertex $v \in C$ is adjacent to a vertex $u \in
I$ if and only if $u = f(v)$. It follows from \Cref{theo:DegreeChar(sk)polIPS}
that $H_s^k$ is a minimal $(s, k)$-polar obstruction provided $k \ge s \ge 2$.

\begin{theorem} \label{theo:char1kand2kIPSobsmin}
    Let $k$ be an integer, $k \ge 2$. If $G$ is a pseudo-split graph, then
    \begin{enumerate}
        \item $G$ is a minimal $(1, k)$-polar obstruction if and only if $G
        \cong K_1 \oplus C_5$.

        \item $G$ is a minimal $(2, k)$-polar obstruction if and only if $G$ is
        isomorphic to some of $G_2^0 ,G_2^1, \overline{G_k^0}$ or $H_1^k$.
    \end{enumerate}
\end{theorem}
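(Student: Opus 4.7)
The plan is to prove both items by combining the structural observations about polar partitions of $C_5$ (already laid out before \Cref{thm:psSinfty}) with the degree-sequence characterisation \Cref{theo:DegreeChar(sk)polIPS}.

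For item~1, I would first verify that $K_1\oplus C_5$ is a minimal $(1,k)$-polar obstruction. It is a minimal monopolar obstruction by \Cref{theo:charIPSobminMonopUnip}(1), so it cannot be $(1,k)$-polar for any $k$. Its vertex-deletions are, up to isomorphism, either $C_5$ (which carries the $(1,2)$-polar partition $(\{v_1,v_3\},\{v_2,v_4,v_5\})$ whose $B$-part splits into the cliques $\{v_2\}$ and $\{v_4,v_5\}$) or $K_1\oplus P_4$ (a split graph, hence $(1,1)$-polar by \Cref{theo:recognitionSplit}); either way the result is $(1,k)$-polar for every $k\ge 2$. For the converse, suppose $G$ is a pseudo-split minimal $(1,k)$-polar obstruction. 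If $G$ contained no induced $K_1\oplus C_5$, then by \Cref{theo:charIPSobminMonopUnip}(1) it would be monopolar and therefore either split or of the form $nK_1+C_5$; split graphs are $(1,1)$-polar, and $(\{v_1,v_3\}\cup nK_1,\,\{v_2,v_4,v_5\})$ shows that $nK_1+C_5$ is $(1,2)$-polar, contradicting that $G$ is not $(1,k)$-polar. Hence $G$ contains $K_1\oplus C_5$ and, by minimality, is isomorphic to it.

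For item~2, the principal tool in both directions is \Cref{theo:DegreeChar(sk)polIPS}. In the ``if'' direction I would treat each of the four candidates uniformly: read off the parameters $c,i,M_C,M_I$ from its $(C,S,I)$-structure, verify that both conditions of the theorem fail with $s=2$, and then inspect the vertex-deletion orbits. Deletion of an $S$-vertex always yields a split graph, hence $(2,k)$-polar by \Cref{theo:recognitionSplit}; deletion of a $C$- or $I$-vertex strictly improves one of the quantities $c-M_C$ or $i-M_I$, so one of the two conditions of \Cref{theo:DegreeChar(sk)polIPS} becomes satisfied for the reduced graph and minimality is confirmed. In the ``only if'' direction, let $G=(C,S,I)$ be a pseudo-split minimal $(2,k)$-polar obstruction; $G$ is imperfect because split graphs are $(1,1)$-polar. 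Negating both conditions of \Cref{theo:DegreeChar(sk)polIPS} gives
\[ (\,k\le i\ \text{or}\ M_C\le c-1\,)\ \text{and}\ (\,c\ge 2\ \text{or}\ k\le i-M_I+1\,). \]
I would then split on $c$. The case $c=0$ is excluded because it forces $M_I=i$, reducing the second conjunct to $k\le 1$, contradicting $k\ge 2$. For $c=1$, the sub-case $M_C=1$ (no $C$--$I$ edge) together with $k\le i$ forces, after minimising $i$, $G\cong\overline{G_k^0}$; the sub-case $M_C=0$ (at least one $C$--$I$ edge) forces, after trimming any redundant vertices of~$I$, the graph $H_1^k$ of the statement. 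For $c=2$ I would argue $i=1$ (otherwise an $I$-vertex can be deleted while both displayed inequalities remain in force) and then split on the two adjacency patterns between $C$ and the unique $I$-vertex to recover $G_2^0$ and $G_2^1$. Finally $c\ge 3$ is ruled out by exhibiting, in every configuration satisfying the display, a $C$-vertex whose deletion keeps both conjuncts true, contradicting minimality.

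The hardest part is this bookkeeping in the converse of item~2: after every tentative vertex deletion one must recompute $c,i,M_C,M_I$ and verify that neither condition of \Cref{theo:DegreeChar(sk)polIPS} is resurrected. The $c\ge 3$ step in particular relies on choosing the deleted $C$-vertex so that $M_C$ drops along with $c$, preserving $M_C\le c-1$; carefully listing the pathological configurations in which no such vertex exists, and checking that each of them is already one of the four listed obstructions or else fails to be imperfect, is where the real work lies.
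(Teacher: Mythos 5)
Your plan is correct in outline and is essentially the paper's own argument: both hinge on \Cref{theo:DegreeChar(sk)polIPS} together with \Cref{rem:degrees}, verify the candidates directly, and obtain the converse by a case split on $|C|$ (the paper phrases the elimination of extra cases via induced copies of the candidates, you phrase it via re-checking the degree conditions after vertex deletions; these are the same argument). One caveat worth flagging: your uniform ``read off $c,i,M_C,M_I$ and check that both conditions fail'' step does \emph{not} go through for $H_1^k$ as literally defined in the paper ($|C|=s-1=0$, $|I|=k-1$, i.e.\ $C_5+(k-1)K_1$, which is even $(1,2)$-polar), nor for the $H_2^k$ the paper's proof mentions ($c=1$, $i=k-1$, one $C$--$I$ edge, which satisfies condition~2 of \Cref{theo:DegreeChar(sk)polIPS} since $M_I=1$); the graph your converse actually produces in the sub-case $c=1$, $M_C=0$ --- namely $c=1$, $i=k$, exactly one $C$--$I$ edge, which is $\overline{G_k^1}$ --- is the correct fourth obstruction, so carrying out your verification would expose an indexing slip in the statement rather than a flaw in your plan. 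Also, in the case $c=2$ with $C$ completely nonadjacent to $I$, no $I$-vertex deletion preserves non-polarity when $i=k$; there you must instead delete a $C$-vertex (or observe the proper induced $\overline{G_k^0}$), so the parenthetical justification for forcing $i=1$ needs that extra sub-case.
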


\begin{proof}
    It is a routine to verify that $K_1 \oplus C_5$ is a minimal $(1, k)$-polar
    obstruction. In addition, $G$ is $K_1 \oplus C_5$-free if and only if $S =
    \varnothing$ or $C = \varnothing$, but in both cases $G$ is a $(1,2)$-polar
    graph, hence a $(1, k)$-polar graph.

    Previously, we observed that the graphs $G_2^0 ,G_2^1$ and $H_2^k$ are all
    of them $(2, k)$-polar obstructions. We also observed that $G_k^0$ is a
    minimal $(k, 2)$-polar obstruction, so $\overline{G_k^0}$ is a minimal $(2,
    k)$-polar obstruction.
    
    Now, assume for having a contradiction that $G$ is a minimal $(s, k)$-polar
    obstruction different to $G_2^0 ,G_2^1, \overline{G_k^0}$ and $H_1^k$. Let
    $(C, S, I)$ be the pseudo-split partition of $G$, and let us denote by $c$
    and $i$ the cardinalities of $C$ and $I$, respectively. Notice that $G$ is
    imperfect, otherwise it would be a $(1,1)$-polar graph, and hence a
    $(2,k)$-polar graph. Also observe that, if either $i = 0$, or both $c \le 1$
    and $i \le k-1$, then $G$ would admit a $(2, k)$-polar partition, which is
    impossible. From the previous observation we have that either $c \ge 2$ and
    $i \ge 1$, or $c \le 1$ and $i \ge k$.

    Suppose that $c \ge 2$ and $i \ge 1$. Since $G$ is a $\set{G_2^0 ,G_2^1}$-free
    graph, we have that $C$ is completely nonadjacent to $I$. Notice that $i \ge
    k$, otherwise $G$ would be a $(2, k)$-polar graph. But then, $G$ has
    $\overline{G_k^0}$ as an induced subgraph, which is impossible. Thus, it
    must be the case that $c \le 1$ and $i \ge k$. Since $G$ is not a $(2,
    k)$-polar graph, $c \ge 1$, so $c = 1$. Let $v$ be the only vertex in $C$.
    If $|N(v) \cap I| \le k-2$, then $G$ is a $(2, k)$-polar graph, which is not
    possible, so that $|N(v) \cap I| \ge k-1$, but then $G$ contains an induced
    subgraph isomorphic to either $\overline{G_k^0}$ or $H_1^k$, contradicting
    that $G$ is not isomorphic to these graphs. The contradiction arose from
    supposing the existence of a pseudo-split minimal $(2,k)$-polar obstruction
    different to $G_2^0 ,G_2^1, \overline{G_k^0}$ and $H_1^k$, so it does not
    exist.
\end{proof}

It is a simple observation that, for any nonnegative integer $s$, a graph $G$ is
a minimal $(s, \infty)$-polar obstruction if and only if there is a nonnegative
integer $k_0$ such that, for any integer $k \ge k_0$, $G$ is a minimal $(s,
k)$-polar obstruction.
Since monopolar graphs are by definition the $(1, \infty)$-polar graphs, then
item 1 of \Cref{theo:charIPSobminMonopUnip} can be deduced as a consequence of
the previous observation and item 1 of \Cref{theo:char1kand2kIPSobsmin}.
Similarly, the next corollary follows directly from the previous observation and
item 2 of \Cref{theo:char1kand2kIPSobsmin}.

\begin{corollary} \label{cor:ps2inftyObsmin}
    There are only two pseudo-split minimal $(2, \infty)$-polar obstructions,
    namely $G_2^0$ and $G_2^1$.
\end{corollary}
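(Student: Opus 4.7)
The plan is to leverage the observation stated just before the corollary: a graph $G$ is a minimal $(2,\infty)$-polar obstruction if and only if there exists a nonnegative integer $k_0$ such that $G$ is a minimal $(2,k)$-polar obstruction for every $k \ge k_0$. This reduces the problem to understanding the eventual behavior, as $k$ grows, of the family of pseudo-split minimal $(2,k)$-polar obstructions already classified in item 2 of \Cref{theo:char1kand2kIPSobsmin}.

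First I would verify that $G_2^0$ and $G_2^1$ are minimal $(2,\infty)$-polar obstructions. By \Cref{theo:char1kand2kIPSobsmin}, both graphs are minimal $(2,k)$-polar obstructions for every integer $k \ge 2$; taking $k_0 = 2$, the observation immediately yields that each is a minimal $(2,\infty)$-polar obstruction.

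For the converse, suppose that $G$ is a pseudo-split minimal $(2,\infty)$-polar obstruction, and fix $k_0$ as supplied by the observation. For every $k \ge \max\{k_0, 2\}$, item 2 of \Cref{theo:char1kand2kIPSobsmin} forces $G$ to be isomorphic to one of the four graphs $G_2^0$, $G_2^1$, $\overline{G_k^0}$, or $H_1^k$. The key point is now to examine the orders of the graphs in this list: $G_2^0$ and $G_2^1$ have fixed order $8$, whereas $\overline{G_k^0}$ has $k+6$ vertices and $H_1^k$ has $k+4$ vertices. Since $G$ is a fixed graph of finite order $n$, for every $k > \max\{n, k_0, 2\}$ the graphs $\overline{G_k^0}$ and $H_1^k$ are too large to be isomorphic to $G$, leaving only the possibilities $G \cong G_2^0$ or $G \cong G_2^1$.

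This comparison of orders is really the only nontrivial step, and it is purely bookkeeping from the definitions of $G_s^0$, $G_s^1$, and $H_s^k$. No genuine obstacle arises, so the entire argument should fit in a few lines.
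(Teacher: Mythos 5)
Your proposal is correct and follows essentially the same route as the paper, which derives the corollary directly from the observation about minimal $(s,\infty)$-polar obstructions being eventually minimal $(s,k)$-polar obstructions together with item 2 of \Cref{theo:char1kand2kIPSobsmin}; you merely make explicit the order-comparison step (that $\overline{G_k^0}$ and $H_1^k$ grow with $k$) that the paper leaves implicit.
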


It seems that there is not an easy way to describe the complete lists of
pseudo-split minimal $(s, k)$-polar obstructions when $s$ and $k$ are arbitrary
nonnegative integers, but, as we observed before, we known there is just a
finite number of them, so it becomes natural to ask about the best possible
upper bounds for their order. In the following propositions, we use
\Cref{theo:DegreeChar(sk)polIPS} to give tight upper bounds for the order of
pseudo-split minimal $(s, k)$- and $(s, \infty)$-polar obstructions. We start
with the following technical observation.

\begin{lemma} \label{lem:SomeNoMin(sk)obs}
    Let $G = (C, S, I)$ be an imperfect pseudo-split graph, and let $c$ and $i$
    be the cardinalities of $C$ and $I$, respectively. Let $s$ and $k$ be
    nonnegative integers such that $s+k \ge 1$. The following assertions hold
    true.
    \begin{enumerate}
        \item If $c > s$, then $G$ is not a minimal $(s, k)$-polar obstruction.

        \item If $i > k$, then $G$ is not a minimal $(s, k)$-polar obstruction.
    \end{enumerate}
\end{lemma}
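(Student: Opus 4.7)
The plan is to use Theorem~\ref{theo:DegreeChar(sk)polIPS} as a black box throughout. If $G$ itself is $(s,k)$-polar, it is trivially not a minimal $(s,k)$-polar obstruction, so I may assume $G$ is not $(s,k)$-polar. For item~1, I will exhibit a vertex $v \in C$ such that $G - v$, which is again an imperfect pseudo-split graph with partition $(C \setminus \set{v}, S, I)$, is still not $(s,k)$-polar; this witnesses non-minimality. Item~2 will then follow by complementation: the class of imperfect pseudo-split graphs is self-complementary via $(C, S, I) \mapsto (I, S, C)$ (using that $C_5$ is self-complementary), and a graph is $(s,k)$-polar iff its complement is $(k,s)$-polar, so being a minimal $(s,k)$-polar obstruction is equivalent to being a minimal $(k,s)$-polar obstruction for the complement. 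Thus item~2 reduces to item~1 applied to $\overline{G}$, in which the role of $c$ is played by $i$.

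The key observation for item~1 is that the hypothesis $c > s$ rules out the second clause of Theorem~\ref{theo:DegreeChar(sk)polIPS}, since $s \le c - 1 < c + 1$. Hence the failure of $(s,k)$-polarity of $G$ must come from the first clause, i.e., either $k \le i$ or $s \le c - M_C + 1$. The natural strategy is to choose $v \in C$ so that the quantities controlling the first clause do not improve in $G - v$: the value $i$ is automatically preserved because we delete from $C$, and $c - M_C$ can be preserved by taking $v$ with $\Deg{v} = c + 4$ whenever $M_C \ge 1$, since such a $v$ lies in the set counted by $M_C$ and so both $c$ and $M_C$ drop by exactly one. When $M_C = 0$ any $v \in C$ works, because then $c' - M_C' + 2 = c + 1 > s$, and the first clause gets strictly harder to satisfy. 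Since $c > s \ge 0$ guarantees $C \ne \varnothing$, such a $v$ always exists.

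The bulk of the argument is then a short bookkeeping check comparing the parameters of $G$ and $G - v$, using that every vertex of $C \setminus \set{v}$ is adjacent to $v$ while vertices of $S$ and $I$ lose at most one unit of degree; this yields $c' = c - 1$, $i' = i$, and $M_C' \in \set{M_C - 1, M_C}$ according to whether $v \in M_C$ or not. I expect the main obstacle to be exactly this case analysis of how $M_C$ reacts to deleting a single vertex of $C$, which is why the choice $v \in M_C$ (when available) is essential: picking a vertex of $C \setminus M_C$ while $M_C \ge 1$ would decrease $c - M_C + 2$ by one and could turn $G - v$ into an $(s,k)$-polar graph. Once this single delicate point is handled, both clauses of Theorem~\ref{theo:DegreeChar(sk)polIPS} fail for $G - v$, proving item~1; item~2 follows at once by the complementation reduction described above.
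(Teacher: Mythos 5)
Your argument is correct, and it rests on the same engine as the paper's own proof --- \Cref{theo:DegreeChar(sk)polIPS} together with \Cref{rem:degrees} --- but the execution is genuinely different. The paper splits item~1 into two cases: when $i \ge k$ it observes that for \emph{every} $v \in C$ both clauses of the characterization fail for $G-v$, so $G$ cannot be minimal; when $i < k$ it argues by contradiction, extracting from the minimality assumption, for each $v\in C$, a set of at least $c-s+1$ vertices of $C\setminus\set{v}$ completely nonadjacent to $I$, and then exhibiting a non-$(s,k)$-polar proper induced subgraph $H$ obtained by deleting $c-s$ vertices of $C$. Your proof replaces all of this with a single well-chosen vertex deletion: picking $v$ of degree $c+4$ when $M_C \ge 1$ keeps both $i$ and $c-M_C$ invariant, so the first clause fails for $G-v$ exactly when it fails for $G$, while the second clause is killed outright by $c'+1 = c > s$; when $M_C = 0$ any $v \in C$ works since then $c'-M_C'+2 = c+1 > s$. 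This is more direct and uniform --- no case split on $i$ versus $k$, no proof by contradiction, no multi-vertex deletion --- at the cost of the one delicate bookkeeping point you correctly isolate, namely that $M_C$ drops by exactly one when the deleted vertex is one of the $M_C$ vertices of degree $c+4$ and is unchanged otherwise. Your treatment of item~2 by complementation ($(C,S,I) \mapsto (I,S,C)$ with the roles of $s$ and $k$ swapped, using that minimal $(s,k)$-polar obstructions correspond to minimal $(k,s)$-polar obstructions of the complement) is also sound, and it makes precise the paper's unelaborated claim that the proof of item~2 is ``analogous.''
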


\begin{proof}
    We only prove item 1 because the proof of item 2 is analogous. Notice that,
    if $c > s$ and $i \ge k$, it follows from \Cref{theo:DegreeChar(sk)polIPS}
    that, for every vertex $v \in C$, $G-v$ is not an $(s, k)$-polar graph, which
    clearly implies that $G$ is not a minimal $(s, k)$-polar obstruction. Thus,
    we can assume that $i < k$.
    
    Assume for obtaining a contradiction that $G$ is a minimal $(s, k)$-polar
    obstruction. Observe that for every vertex $v$ of $C$, $G-v$ has
    pseudo-split partition $(C\setminus\set{v},S,I)$, and $|C\setminus\set{v}| =
    c-1 \ge s$. Since $G$ is a minimal $(s, k)$-polar obstruction, we have that,
    for every vertex $v$ of $G$, $G-v$ admits an $(s, k)$-polar partition. That
    is true in particular if $v\in C$. Then, we have from \Cref{rem:degrees},
    \Cref{theo:DegreeChar(sk)polIPS}, and our previous observations that, for
    any vertex $v$ of $C$, there are at least $c-s+1$ vertices of $C \setminus
    \set{v}$ that are completely nonadjacent to $I$; let $C'_v$ be the set of
    these vertices.

    Notice that no vertex $v$ of $C$ is completely nonadjacent to $I$, otherwise
    $C'_v \cup \set{v}$ would be a subset of $C$ of cardinality at least $c-s+2$
    that is completely nonadjacent to $I$, but then, by
    \Cref{rem:degrees,theo:DegreeChar(sk)polIPS}, $G$ would be an $(s, k)$-polar
    graph, and we are assuming it is not. Thus, we conclude that each vertex of
    $C$ is adjacent to at least one vertex of $I$.

    Here is the desired contradiction. Let $H$ be a graph obtained from $G$ by
    removing any $c-s$ vertices of $C$. Thus, $H$ is a proper induced subgraph
    of $G$ with a pseudo-split partition $(C^\ast, S,I)$ such that any vertex of
    $C^\ast$ is adjacent to at least one vertex of $I$. But then, we have from
    \Cref{theo:DegreeChar(sk)polIPS} that $H$ is not an $(s, k)$-polar graph,
    contradicting the minimality of $G$.
\end{proof}

By itself, \Cref{lem:SomeNoMin(sk)obs} implies that, for any nonnegative
integers $s$ and $k$, a pseudo-split minimal $(s, k)$-polar obstruction has
order at most $s + k + 5$. Nevertheless, as we can corroborate in
\Cref{theo:char1kand2kIPSobsmin} and the observations that precede it, if $\min
\set{s, k} \le 2$, each minimal $(s, k)$-polar obstruction has order strictly
lower than $s + k +5$. In
\Cref{lem:fin(sk)obsminIPSmin2,theo:fin(sk)obsminIPSmin3} we will prove that
this is true for general values of $s$ and $k$, and not only when $\min \set{s,
k} \le 2$.

\begin{lemma} \label{lem:fin(sk)obsminIPSmin2}
    Let $s$ and $k$ be integers, $s, k \ge 2$, and let $G = (C, S, I)$ be a
    pseudo-split minimal $(s, k)$-polar obstruction. Then, $G$ is imperfect,
    $|C| \le s$, $|I| \le k$ and $|C|+|I| \le s+k -1$.
    
    Consequently, any pseudo-split minimal $(s, k)$-polar obstruction has order
    at most $s + k + 4$, and this bound is tight when $\min \set{s, k} = 2$.
\end{lemma}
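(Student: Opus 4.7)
The first three items are essentially immediate, and my plan is to dispose of them quickly before addressing the fourth. First, a perfect pseudo-split graph is split and hence $(1,1)$-polar, so it is $(s,k)$-polar for $s,k\ge 1$; this forces $G$ to be imperfect. The bounds $|C|\le s$ and $|I|\le k$ follow directly from \Cref{lem:SomeNoMin(sk)obs}.

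For the key inequality $|C|+|I|\le s+k-1$, I would argue by contradiction, assuming $|C|=s$ and $|I|=k$. Plugging $c=s$ and $i=k$ into \Cref{theo:DegreeChar(sk)polIPS} makes both disjunctive clauses fail, confirming $G$ is not $(s,k)$-polar. For minimality to hold, every $G-v$ must be $(s,k)$-polar. Deleting $v\in S$ is harmless: $G[S]$ is the unique induced $C_5$ of $G$, so $G-v$ is split. Reapplying \Cref{theo:DegreeChar(sk)polIPS} to $G-v$ for $v\in C$ (with $c'=s-1$ and $i'=k$), polarity requires the second clause with $M_I'\ge 2$, i.e., at least two vertices of $I$ are completely adjacent to $C\setminus\set{v}$; call this condition (A). Symmetrically, for $v\in I$ polarity of $G-v$ requires at least two vertices of $C$ to be completely nonadjacent to $I\setminus\set{v}$; call this (B). Minimality therefore demands (A) for every $v\in C$ and (B) for every $v\in I$.

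The core step, where I expect the main work, is to rule out the simultaneous validity of (A) and (B). Let $R\subseteq I$ be the set of vertices completely adjacent to $C$ and $T\subseteq C$ those completely nonadjacent to $I$; they cannot both be nonempty. If $R=T=\varnothing$, then (A) provides, for each $v\in C$, at least two vertices of $I$ whose unique non-neighbor in $C$ is $v$; disjointness across $v\in C$ forces $k\ge 2s$, while (B) symmetrically forces $s\ge 2k$, and these collapse under $s,k\ge 2$. If $R\ne\varnothing$ (so $T=\varnothing$), fix $u^\ast\in R$ and any $u\in I\setminus\set{u^\ast}$ (possible since $k\ge 2$); condition (B) supplies $v\in C$ whose only neighbor in $I$ is $u$, so $v$ is not adjacent to $u^\ast$, contradicting $u^\ast\in R$. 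The case $T\ne\varnothing$ is symmetric, either by direct argument or via complementation in the self-complementary class of pseudo-split graphs. Thus $|C|+|I|\le s+k-1$, and the order bound $|C|+|S|+|I|\le s+k+4$ follows.

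For tightness when $\min\set{s,k}=2$, I would point to \Cref{theo:char1kand2kIPSobsmin}: taking $s=2$, the minimal $(2,k)$-polar obstruction $\overline{G_k^0}$ has order $k+6=s+k+4$, and the case $k=2$ follows by complementation.
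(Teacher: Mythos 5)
Your proposal is correct, and the easy parts (imperfection, $|C|\le s$, $|I|\le k$ via \Cref{lem:SomeNoMin(sk)obs}, and the reduction of the order bound to ruling out $|C|=s$, $|I|=k$) coincide with the paper. The core step, however, is handled by a genuinely different argument. The paper deletes a \emph{single} vertex $v\in C$, analyzes an $(s,k)$-polar partition $(A,B)$ of $G-v$ structurally (the partition must restrict to a $(1,2)$-polar partition of $G[S]$, forcing $C\setminus\set{v}\subseteq A$ and some $u\in I\cap A$ completely adjacent to $C\setminus\set{v}$), and then exhibits $G[C\cup S\cup\set{u}]$ as a proper induced copy of the known obstruction $G_s^0$ or $G_s^1$, contradicting minimality. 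You instead apply the degree characterization (\Cref{theo:DegreeChar(sk)polIPS}) to \emph{every} $G-v$ with $v\in C\cup I$, extract the conditions (A) ($M_I'\ge 2$ for each $v\in C$) and (B) ($M_C'\ge 2$ for each $v\in I$) -- both correctly derived, since in each case one clause of the characterization fails on cardinalities alone -- and then refute their coexistence by the case analysis on $R$ and $T$; the observation that $R$ and $T$ cannot both be nonempty, the disjointness/counting argument giving $k\ge 2s$ and $s\ge 2k$ when both are empty, and the two mixed cases are all sound. The paper's route is shorter because it reuses the obstructions $G_s^0,G_s^1$ already identified; yours is longer but does not invoke those specific obstructions and extracts more symmetric structural information from minimality. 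Your tightness witness $\overline{G_k^0}$ (order $k+6=s+k+4$ for $s=2$, with the case $k=2$ by complementation) is valid and in fact covers all $k\ge 2$ more explicitly than the paper's appeal to $G_2^0$ and $G_2^1$.
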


\begin{proof}
    As we noticed at the beginning of this chapter, split graphs are $(1,1)$-polar,
    hence $(s, k)$-polar, so $G$ is an imperfect pseudo-split graph. Let $c =
    |C|$ and $i = |I|$. Observe that \Cref{lem:SomeNoMin(sk)obs} implies that $s
    \ge c$ and $k \ge i$, so $|V_G| = |C| + |I| + |S| \le s + k + 5$ and this
    bound is attained if and only if $c = s$ and $i = k$.

    Assume for a contradiction that $c = s$ and $i = k$, so $G$ has order $s +k
    +5$. Let $v \in C$, and let us use $C'$ to denote $C \setminus \set{v}$ . Let
    $(A, B)$ be an $(s, k)$-polar partition of $G-v$. Observe that $G[S]$
    inherit a $(1, 2)$-polar partition from $(A, B)$, otherwise $G[S]$ would
    inherit a $(2, 1)$-polar partition, but then $I \subseteq B$ implying that
    $B$ has an independent subset of size $k+1$, which is impossible. Moreover,
    since $G[B]$ is $\set{(k+1)K_1, P_3}$-free, we have that $C' \subseteq A$ and
    there is at least one vertex $u$ of $I$ in the part $A$. Notice that $u$ is
    completely adjacent to $C'$, because $G[A]$ does not have induced copies of
    $\overline{P_3}$.

    Here we have the desired contradiction, because $G[C \cup S \cup \set{u}]$ is
    isomorphic to either $G_s^0$ or $G_s^1$, depending on whether $u$ is
    adjacent or not to $v$, but then $G$ has an $(s, k)$-polar obstruction as a
    proper induced subgraph, contradicting the minimality of $G$. The
    contradiction arose from assuming that $|V_G| > s+k+4$, so it is not the
    case. Notice that $G_s^0$ and $G_s^1$ attain the bound when $s = 2$, so the
    bound is tight.
\end{proof}

\begin{theorem} \label{theo:fin(sk)obsminIPSmin3}
    Let $s$ and $k$ be integers, $s, k \ge 3$. Then, any pseudo-split minimal
    $(s, k)$-polar obstruction has order at most $s + k + 3$, and the bound is
    tight.
\end{theorem}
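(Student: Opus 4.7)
The plan is to sharpen by one the upper bound from \Cref{lem:fin(sk)obsminIPSmin2}. I would suppose for contradiction that $G = (C,S,I)$ is a pseudo-split minimal $(s,k)$-polar obstruction with $|V_G| = s+k+4$. Since $G$ is imperfect $|S|=5$, and together with the bounds $|C|\le s$, $|I|\le k$, $|C|+|I|\le s+k-1$ from \Cref{lem:fin(sk)obsminIPSmin2}, the only possibilities are $(|C|,|I|)=(s,k-1)$ and $(|C|,|I|)=(s-1,k)$. Complementation interchanges these two cases (and turns pseudo-split minimal $(s,k)$-polar obstructions into pseudo-split minimal $(k,s)$-polar obstructions), so I would only handle $(|C|,|I|)=(s,k-1)$.

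Applying \Cref{theo:DegreeChar(sk)polIPS} to $G$, condition~2 fails automatically (it would require $s\ge |C|+1=s+1$), so condition~1 must also fail, forcing $M_C\le 1$. I would split on this value. If $M_C=0$, pick any $v\in C$ and let $(A,B)$ be an $(s,k)$-polar partition of $G-v$ given by minimality. When $G[S]$ inherits a $(2,1)$-partition, $I\subseteq B$ (else $\overline{P_3}$ appears in $G[A]$), and no $c\in C\setminus\set{v}$ can sit in $B$ (else $c$ would be completely nonadjacent to $I$, contradicting $M_C=0$); then $G[A]$ is complete multipartite containing the clique $C\setminus\set{v}$ together with $G[A\cap S]\cong P_3$, forcing at least $(s-1)+2=s+1$ parts, a contradiction. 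When $G[S]$ inherits a $(1,2)$-partition, a $P_3$-argument on $B\cap S\cong K_2+K_1$ again forces $C\setminus\set{v}\subseteq A$, counting components in $G[B]$ forces $|I\cap A|\ge 1$, and the $\overline{P_3}$-freeness of $G[A]$ makes any such $u\in I\cap A$ completely adjacent to $C\setminus\set{v}$; then $G[C\cup S\cup\set{u}]\cong G_s^0$ or $G_s^1$ according as $u$ is adjacent to $v$ or not, yielding a \emph{proper} induced $(s,k)$-polar obstruction in $G$ (proper because $s+6<s+k+4$ since $k\ge 3$), contradicting minimality.

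If instead $M_C=1$, I would let $v^*\in C$ be the unique $C$-vertex completely nonadjacent to $I$. Then $G-v^*$ is $(s,k)$-polar by minimality, but its $M_C$ has dropped to $0$; so \Cref{theo:DegreeChar(sk)polIPS} applied to $G-v^*$ forces its $M_I$ to be at least $1$, providing $u^*\in I$ completely adjacent to $C\setminus\set{v^*}$. Since $u^*$ is not adjacent to $v^*$ (by definition of $v^*$), the subgraph $G[C\cup S\cup\set{u^*}]$ is isomorphic to $G_s^1$, a proper induced $(s,k)$-polar obstruction (the hypothesis $k\ge 3$ again makes it proper), contradicting minimality of $G$. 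This would finish the upper bound.

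For tightness I would exhibit $H_s^k$ when $s\le k$ and $\overline{H_k^s}$ when $s>k$: both are minimal $(s,k)$-polar obstructions of order $(s-1)+5+(k-1)=s+k+3$ by the observations preceding \Cref{theo:char1kand2kIPSobsmin} together with complementation. I expect the main bookkeeping obstacle to be the $(1,2)$-inheritance case of the $M_C=0$ branch, where one must simultaneously confine $C\setminus\set{v}$ to $A$, push at least one $I$-vertex into $A$, and track the adjacency of that vertex with $v$ so as to recover $G_s^0$ or $G_s^1$ cleanly; the hypothesis $k\ge 3$ is used precisely to turn this embedded obstruction into a proper one, which is what drops the bound from $s+k+4$ to $s+k+3$.
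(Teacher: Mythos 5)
Your proof is correct, and it follows the same overall strategy as the paper's: reduce to $|C|=s$, $|I|=k-1$ via \Cref{lem:fin(sk)obsminIPSmin2}, delete a vertex of $C$, analyse which polar partition $G[S]$ inherits, and extract a proper induced copy of $G_s^0$ or $G_s^1$ (using $k\ge 3$ for properness), with the same tightness witnesses $H_s^k$ and $\overline{H_k^s}$. The one genuine structural difference is how the $(2,1)$-inheritance branch is closed. The paper, for an arbitrary $v\in C$, forces a vertex $u\in B\cap(C\setminus\set{v})$ completely nonadjacent to $I$, repeats the deletion argument with $u$ to find a second such vertex $w$, and then assembles an explicit $(s,k)$-polar partition of $G$ from $\set{u,w}$ --- in effect re-deriving the $M_C\ge 2$ criterion of \Cref{theo:DegreeChar(sk)polIPS} by hand. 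You instead invoke that theorem up front to get $M_C\le 1$ and split on $M_C$: when $M_C=0$ the $(2,1)$ branch dies immediately by part-counting, and when $M_C=1$ you bypass the inheritance dichotomy altogether, applying the degree characterization to $G-v^*$ to force $M_I'\ge 1$ and land directly on $G_s^1$. This buys a somewhat cleaner argument (no ``repeat with $u$ in place of $v$'' step, whose implicit assumption that $G-u$ again inherits a $(2,1)$-partition the paper leaves unremarked), at the cost of leaning more heavily on \Cref{theo:DegreeChar(sk)polIPS}; the paper's version is more self-contained in that branch. All the individual steps you outline check out, including the complementation reduction to the single case $(|C|,|I|)=(s,k-1)$, which is legitimate since the hypothesis $s,k\ge 3$ is symmetric.
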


\begin{proof}
    Let $G$ be an $(s, k)$-polar obstruction with pseudo-split partition $(C, S,
    I)$, and let $c$ and $i$ be the cardinalities of $C$ and $I$, respectively.
    By \Cref{lem:fin(sk)obsminIPSmin2}, $G$ is an imperfect pseudo-split graph
    with $c \le s$, $i \le k$ and, either $c \le s-1$ or $i \le k-1$. Notice
    that, if $c < s-1$ or $i < k-1$, then $|V_G| \le s + k -3$, so we are done.
    Thus we can assume that, either $c = s-1$ or $i = k-1$. Let us assume that
    $i = k-1$, the case $c = s-1$ is analogous.

    To obtain a contradiction, let us suppose that $G$ has at least $s + k + 4$
    vertices, which implies by the previous observations that $c = s$. Let $v$
    be a vertex in $C$, and let $(A, B)$ be an $(s, k)$-polar partition of
    $G-v$. We have two cases: either $G[S]$ inherits a $(1,2)$- or a
    $(2,1)$-polar partition from $(A, B)$.

    In the first case, since $G[B]$ is $\set{(k+1)K_1, P_3}$-free, we have that $C
    \setminus \set{v} \subseteq A$ and there exists a vertex $u \in I\cap A$.
    Moreover, $G[A]$ is a $\overline{P_3}$-free graph, so $u$ is completely
    adjacent to $C \setminus \set{v}$. But then, $G[C \cup S \cup \set{u}]$ is
    isomorphic to either $G_s^0$ or $G_s^1$, so $G$ properly contains an
    $(s, k)$-polar obstruction, which is impossible.
    
    Hence, it must be the case that $G[S]$ inherits a $(2,1)$-polar partition
    $(A', B')$ from $(A, B)$, in which case $I \subseteq B$ and there exists a
    vertex $u \in B \cap (C \setminus \set{v})$, so that $u$ is completely non
    adjacent to $I$. Additionally, repeating the argument, but using $u$ instead
    of $v$, we have that there exists a vertex $w \in B \cap (C \setminus
    \set{u})$, so that $w$ is completely non adjacent to $I$. But then,
    \[ (A' \cup C \setminus \set{u, w}, B' \cup I \cup \set{u,w}) \] is an $(s,
    k)$-polar partition of $G$, a contradiction. The contradiction arose from
    supposing that $|V_G| \ge s+ k + 4$, so it must be the case that $G$ has at
    most $s + k + 3$ vertices.

    To bound is tight since $H_s^k$ is a pseudo-split minimal $(s, k)$-polar
    obstruction whenever $k \ge s \ge 3$, and $\overline{H_k^s}$ is a
    pseudo-split minimal $(s, k)$-polar obstruction provided $s \ge k \ge 3$.
\end{proof}

In contrast with minimal $(s, k)$-polar obstructions when $s$ and $k$ are
integers, it is unknown whether the number of minimal $(s, \infty)$-polar
obstructions is finite. In the following propositions we prove that, restricted
to the class of pseudo-split graphs, minimal $(s, \infty)$-polar obstructions
are, all of them, minimal $(s, s+1)$-minimal obstructions, implying that there
is only a finite number of them. We start with some technical propositions.

\begin{lemma} \label{lem:ChracterizationCvC'v}
    Let $s$ and $k$ be positive integers, and let $G = (C, S, I)$ be an
    imperfect pseudo-split graph such that $|C| = s$ and $0 < |I| < k-1$. For
    each vertex $v \in I$, let $C_v = \set{ w \in C : w \notin N(v) }$ and
    let $C'_v$ be the set of all vertices in $C$ that are completely
    nonadjacent to $I \setminus \set{v}$. Then, $G$ is a minimal $(s, k)$-polar
    obstruction if and only if for each $v \in I$, both $|C'_v| \ge 2$ and $|C_v
    \cap C'_v| \le 1$.
\end{lemma}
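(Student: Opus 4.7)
The plan is to derive the lemma from \Cref{theo:DegreeChar(sk)polIPS} by classifying, for each vertex $u$ of $G$, whether $G - u$ is $(s,k)$-polar, and then reading off the minimality condition. First I would record the key identity that, for any $v \in I$, the set $C_v \cap C'_v$ consists exactly of the vertices of $C$ that are nonadjacent to $v$ and completely nonadjacent to $I \setminus \set{v}$, that is, the vertices of $C$ completely nonadjacent to all of $I$. In particular $C_v \cap C'_v$ does not depend on $v$, and its cardinality equals the parameter $M_C$ that \Cref{theo:DegreeChar(sk)polIPS} associates with $G$.

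Next I would characterize when $G$ itself is $(s,k)$-polar. Since $|C| = s$, the second clause of \Cref{theo:DegreeChar(sk)polIPS}, which requires $s \ge |C|+1$, is unavailable, so only the first clause can apply. The hypothesis $|I| < k-1$ yields $k \ge |I|+2$, so the condition $k \ge |I|+1$ is automatic and the first clause collapses to $M_C \ge 2$. Hence $G$ fails to be $(s,k)$-polar if and only if $M_C \le 1$, which by the preceding paragraph is precisely the condition $|C_v \cap C'_v| \le 1$ for every (equivalently, for some) $v \in I$.

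For minimality I would analyze $G - u$ in three cases. If $u \in S$, then $G[S \setminus \set{u}] \cong P_4$; taking its middle edge together with $C$ as the clique part, and its two endpoints together with $I$ as the independent part, yields a split partition of $G - u$, so $G - u$ is $(s,k)$-polar trivially. If $u \in C$, then $G - u$ is imperfect pseudo-split with clique size $s-1$ and independent set of size $|I|$; the second clause of \Cref{theo:DegreeChar(sk)polIPS} is now available because $s \ge (s-1) + 1$, and the companion inequality $k \ge |I| - M_I + 2$ holds trivially, since $k \ge |I|+2$. Thus $G - u$ is $(s,k)$-polar for every $u \in C$, regardless of adjacencies. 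Finally, if $u \in I$, then $G - u$ is imperfect pseudo-split with clique size $s$ and independent set of size $|I|-1 < k-1$, and by construction its $M_C$-parameter is exactly $|C'_u|$; applying the analysis from the previous paragraph to $G - u$ yields that $G - u$ is $(s,k)$-polar if and only if $|C'_u| \ge 2$.

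Combining the three cases, every vertex-deleted subgraph of $G$ is $(s,k)$-polar if and only if $|C'_v| \ge 2$ for all $v \in I$. Together with the characterization of $G$ as a non-$(s,k)$-polar graph, this yields the stated equivalence. I expect the subtle step to be the case $u \in C$: one must notice that the hypothesis $|I| < k-1$ forces $k \ge |I|+2$, which is precisely what makes the second clause of \Cref{theo:DegreeChar(sk)polIPS} applicable to $G - u$ unconditionally, so that no adjacency hypothesis on $G$ is needed to conclude $(s,k)$-polarity of $G - u$.
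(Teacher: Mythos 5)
Your proposal is correct and follows essentially the same route as the paper: both arguments reduce everything to \Cref{theo:DegreeChar(sk)polIPS}, observing that $C_v\cap C'_v$ is the set of vertices of $C$ completely nonadjacent to $I$ (so $|C_v\cap C'_v|=M_C\le 1$ encodes non-polarity of $G$), and then checking the three vertex-deletion cases $u\in S$, $u\in C$, $u\in I$ exactly as the paper does. Your explicit remark that $C_v\cap C'_v$ is independent of $v$ and that the hypothesis $|I|<k-1$ makes the $u\in C$ case unconditional is a slightly cleaner packaging of the same argument.
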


\begin{proof}
    Suppose that $G$ is a minimal $(s, k)$-polar obstruction. From the minimality
    of $G$ we have that, for each vertex $v \in I$, $G-v$ is an $(s, k)$-polar
    graph. Then, by \Cref{theo:DegreeChar(sk)polIPS}, $|C'_v| \ge 2$. Moreover,
    from the same proposition we have that, if $|C_v \cap C'_v| \ge 2$ for some
    $v \in I$, then $G$ is an $(s, k)$-polar graph, which is impossible. Then it
    must be the case that, for each vertex $v \in I$, $|C_v \cap C'_v| \le 1$.

    For the converse, assume that $|C'_v|\ge 2$ and $|C_v \cap C'_v| \le 1$, for
    each $v\in I$. For any vertex $v \in C$, $G-v$ is a pseudo-split graph whose
    complete part has $s-1$ vertices and whose independent part has at most
    $k-2$ vertices, so it follows from \Cref{theo:DegreeChar(sk)polIPS} that
    $G-v$ admits an $(s, k)$-polar partition. For any vertex $v \in I$, the set
    $C'_v$ has at least two vertices, so it also follows from
    \Cref{theo:DegreeChar(sk)polIPS} that $G-v$ is an $(s, k)$-polar graph. For
    any vertex $v \in S$, $G-v$ is a split graph, so $G-v$ is an $(s, k)$-polar
    graph. In summary, for every vertex $v$ of $G$, $G-v$ is an $(s, k)$-polar
    graph. Furthermore, \Cref{theo:DegreeChar(sk)polIPS} implies that $G$ is an
    $(s, k)$-polar graph if and only if there are at least two vertices of $C$
    that are completely nonadjacent to $I$. Nevertheless, if $C'$ is any subset
    of $C$ that is completely nonadjacent to $I$, then $C' \subseteq C_v \cap
    C'_v$ for any vertex $v \in I$, and therefore $|C'| \le 1$. Hence, $G$ is
    not an $(s, k)$-polar graph, and we conclude that $G$ is a minimal $(s,
    k)$-polar obstruction.
\end{proof}

For each integer $s \ge 2$, let $F_s = (C, S, I)$ be the imperfect pseudo-split
graph such that $|C| = s$, $|I| = s-1$ and, for an injection $f \colon I \to C$,
a vertex $v \in I$ is adjacent to a vertex $u \in C$ if and only if $u = f(v)$.
Notice that, from \Cref{theo:DegreeChar(sk)polIPS}, we have that for any
nonnegative integer $k$, $F_s$ is not an $(s, k)$-polar graph. Moreover, $F_s$
is a minimal $(s, k)$-polar obstruction if and only if $k > s$.

\begin{lemma} \label{lem:Iles-1}
    Let $s$ and $k$ be integers, $s, k \ge 3$, and let $G = (C, S, I)$ be a
    pseudo-split minimal $(s, k)$-polar obstruction such that $|C| = s$. Then,
    $|I| \le s-1$. In addition, if $|I| = s-1$, then $s < k$ and $G \cong F_s$.
\end{lemma}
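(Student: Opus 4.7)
The plan is to analyze the bipartite adjacency between $C$ and $I$ using the same bookkeeping as in \Cref{lem:ChracterizationCvC'v}. Let $C_0$ denote the set of vertices of $C$ completely nonadjacent to $I$, and for each $v \in I$ let $C_v^+$ denote the set of vertices of $C$ whose unique neighbor in $I$ is $v$. With $C'_v$ defined as in \Cref{lem:ChracterizationCvC'v}, one has $C'_v = C_0 \sqcup C_v^+$, while the family $\{C_v^+ : v \in I\}$ is pairwise disjoint and disjoint from $C_0$, so
\[ |C_0| + \sum_{v \in I} |C_v^+| \le |C| = s. \]
Since $G$ is not $(s,k)$-polar, \Cref{theo:DegreeChar(sk)polIPS} gives $|C_0| \le 1$; since each $G - v$ with $v \in I$ is $(s,k)$-polar, the same theorem gives $|C'_v| \ge 2$, equivalently $|C_v^+| \ge 2 - |C_0|$.

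With these inequalities the first assertion reduces to a case split on $|C_0| \in \{0, 1\}$. If $|C_0| = 0$, the displayed bound becomes $2|I| \le s$, whence $|I| \le s/2 < s - 1$ using $s \ge 3$; if $|C_0| = 1$, it becomes $|I| \le s - 1$ directly. In either case $|I| \le s - 1$.

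Now suppose $|I| = s - 1$. The subcase $|C_0| = 0$ is ruled out, as it would give $2(s - 1) \le s$, against $s \ge 3$; hence $C_0 = \{w_0\}$ for a unique $w_0 \in C$, and the inequality $\sum |C_v^+| \le s - 1$ is tight together with $|C_v^+| \ge 1$, forcing $|C_v^+| = 1$ for every $v \in I$. This produces an injection $f \colon I \to C$, with image $C \setminus \{w_0\}$, such that $v \in I$ is adjacent to $w \in C$ if and only if $w = f(v)$, which is precisely the adjacency pattern defining $F_s$; therefore $G \cong F_s$.

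It remains to establish $s < k$. Suppose for contradiction $s = k$, so $|I| = k - 1$, and consider $G - w_0$, a pseudo-split graph with parameters $c' = s - 1$ and $i' = k - 1$. Applying \Cref{theo:DegreeChar(sk)polIPS}: its first condition requires a vertex of $C \setminus \{w_0\}$ completely nonadjacent to $I$, which does not exist because every element of $C \setminus \{w_0\}$ is matched to some element of $I$; its second condition requires a vertex of $I$ completely adjacent to the $s - 1 \ge 2$ vertices of $C \setminus \{w_0\}$, which fails since each $v \in I$ has exactly one neighbor in $C \setminus \{w_0\}$. Hence $G - w_0$ is not $(s,k)$-polar, contradicting the minimality of $G$. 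The main obstacle to watch is the boundary value $|I| = k - 1$, which sits outside the hypothesis $|I| < k - 1$ of \Cref{lem:ChracterizationCvC'v} and forces us to reapply \Cref{theo:DegreeChar(sk)polIPS} directly rather than simply invoke that lemma.
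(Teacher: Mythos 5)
Your counting argument is sound in outline and genuinely different from the paper's route for the bound $|I|\le s-1$: the paper shows that if $|I|\ge s$ then $G$ properly contains an induced copy of $F_s$ (each $v\in I$ having a ``private'' neighbour in $C$), whereas you bound $|C_0|+\sum_{v\in I}|C_v^+|\le s$ directly, which is arguably cleaner and also hands you the structure of $F_s$ for free in the extremal case. However, there are two related gaps, both traceable to the fact that you never establish $|I|\le k-1$. First, \Cref{theo:DegreeChar(sk)polIPS} applied to $G$ with $|C|=s$ says that $G$ is $(s,k)$-polar if and only if $k\ge |I|+1$ \emph{and} $|C_0|\ge 2$ (the second alternative of the theorem is void since $s\ge |C|+1$ fails); so non-polarity only yields ``$|C_0|\le 1$ \emph{or} $k\le |I|$'', not $|C_0|\le 1$ outright. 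Second, to prove $s<k$ you must refute $s\ge k$, but you only refute $s=k$; with only what you have derived (the theorem applied to $G-v$ gives $k\ge |I|=s-1$, i.e.\ $s\le k+1$), the case $s=k+1$ is still on the table.

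Both gaps close immediately by invoking \Cref{lem:fin(sk)obsminIPSmin2}, which gives $|C|+|I|\le s+k-1$ and hence $|I|\le k-1$ once $|C|=s$: this forces $k\ge |I|+1$, so non-polarity does give $|C_0|\le 1$, and it gives $s-1=|I|\le k-1$, so $s\le k$ and only $s=k$ needs to be excluded. (Alternatively, your $G-w_0$ computation works verbatim for every $s\ge k$: condition~1 of \Cref{theo:DegreeChar(sk)polIPS} for $G-w_0$ fails because $M_C'=0$, and condition~2 requires $M_I'\ge s-k+1\ge 1$ vertices of $I$ completely adjacent to the $s-1\ge 2$ vertices of $C\setminus\{w_0\}$, which the matching structure rules out.) With either repair the proof is correct. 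For the final step the paper instead quotes the observation that $F_s$ is a \emph{minimal} $(s,k)$-polar obstruction precisely when $k>s$; your explicit verification on $G-w_0$ is essentially the computation behind that observation, so no real difference there.
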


\begin{proof}
    Let $c = |C|$ and $i = |I|$. For each $v \in I$, $G -v$ has an $(s,
    k)$-polar partition $(A, B)$. Moreover, since $ c = s$, $G[S]$ inherits a
    $(2,1)$-polar partition from $(A, B)$, so we have that $I \subseteq B$.
    Additionally, at least two vertices of $C$ belong to $B$, and any vertex in
    $C \cap B$ is completely nonadjacent to $I \setminus \set{v}$.

    Observe that, if two vertices in $C \cap B$ are nonadjacent to $v$, then $G$
    would have an $(s, k)$-polar partition, but this is not the case. Hence, for
    each vertex $v \in I$, there is a vertex $u \in C$ whose only neighbor in
    $I$ is $v$. Therefore, if $i \ge s$, $G$ properly contains the $(s,
    k)$-polar obstruction $F_s$, contradicting the minimality of $G$. Thus, we
    conclude that $i \le s-1$.

    Finally, if $i = s-1$, $G$ contains the $(s, k)$-polar obstruction $F_s$.
    But $G$ is a minimal $(s, k)$-polar obstruction, so $G \cong F_s$, and then
    $F_s$ is a minimal $(s, k)$-polar obstruction, which implies that $k > s$.
\end{proof}

\begin{lemma} \label{lem:sInftyIFFss+1}
    Let $s$ and $k$ be integers, $k > s \ge 3$, and let $G$ be a graph with
    pseudo-split partition $(C, S, I)$.
    \begin{enumerate}
        \item If $G$ is a minimal $(s, k)$-polar obstruction such that $|C| =
        s$, then $G$ is a minimal $(s, k')$-polar obstruction for each integer
        $k' \ge k$.

        Particularly, if $G$ is a minimal $(s, s+1)$-polar obstruction with $|C|
        = s$, then it is a minimal $(s, \infty)$-polar obstruction.

        \item If $G$ is a minimal $(s, k)$-polar obstruction such that $|C| =
        s$, then $G$ is a minimal $(s, s+1)$-polar obstruction.

        Consequently, if $G$ is a minimal $(s, \infty)$-polar obstruction, then
        it a minimal $(s, s+1)$-polar obstruction with $|C| = s$.
    \end{enumerate}

    In consequence, $G$ is a minimal $(s, \infty)$-polar obstruction if and only
    if $G$ is an $(s, s+1)$-polar obstruction with $|C| = s$.
\end{lemma}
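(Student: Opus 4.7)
My plan is to reduce everything to Lemma \ref{lem:ChracterizationCvC'v}. For an imperfect pseudo-split graph $G = (C, S, I)$ with $|C| = s$ and $0 < |I| < k - 1$, that lemma characterizes the property of being a minimal $(s, k)$-polar obstruction by the purely graph-theoretic conditions $|C'_v| \ge 2$ and $|C_v \cap C'_v| \le 1$ for every $v \in I$. Because these conditions make no reference to $k$, verifying the hypothesis $0 < |I| < k' - 1$ for several values of $k'$ will let minimality transfer between them.

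To set things up, I first show that any minimal $(s, k)$-polar obstruction $G$ with $|C| = s$ and $k > s \ge 3$ satisfies $1 \le |I| \le s - 1$. If $|I| = 0$ then every vertex of $C$ has degree $c + 4$, so $M_C = |C| = s$; Theorem \ref{theo:DegreeChar(sk)polIPS} (using $k \ge 1 = i + 1$ and $s \ge c - M_C + 2 = 2$) would declare $G$ to be $(s, k)$-polar, a contradiction. The upper bound $|I| \le s - 1$ is Lemma \ref{lem:Iles-1}. In particular $|I| < s \le k - 1$, and the same inequality is valid for $k' = s + 1$ and for every $k' \ge k$.

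Items 1 and 2 are now short. For item 1, applying Lemma \ref{lem:ChracterizationCvC'v} first with $k$ extracts $|C'_v| \ge 2$ and $|C_v \cap C'_v| \le 1$ for every $v \in I$; reapplying it with $k' \ge k$ (whose hypothesis is also met) shows that $G$ is a minimal $(s, k')$-polar obstruction. Specializing to $k = s + 1$ yields the particular case, because by the observation preceding Corollary \ref{cor:ps2inftyObsmin} a graph is a minimal $(s, \infty)$-polar obstruction precisely when it is a minimal $(s, k')$-polar obstruction for all sufficiently large $k'$. Item 2 is the same argument run with $k' = s + 1$. For the closing biconditional, suppose $G$ is a minimal $(s, \infty)$-polar obstruction: the same observation yields some $k \ge s + 1$ for which $G$ is a minimal $(s, k)$-polar obstruction, Lemma \ref{lem:SomeNoMin(sk)obs} gives $|C| \le s$, and Theorem \ref{thm:psSinfty} forces $|C| \ge s$ (else $s > |C|$ would make $G$ an $(s, \infty)$-polar graph), so $|C| = s$ and item 2 applies; the converse is the particular case of item 1. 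The main obstacle throughout is careful bookkeeping of the applicability of Lemma \ref{lem:ChracterizationCvC'v}, most notably ruling out $|I| = 0$ via the degree criterion.
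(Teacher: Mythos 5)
Your proof is correct and follows essentially the same route as the paper's: both hinge on the fact that the characterization in Lemma~\ref{lem:ChracterizationCvC'v} does not depend on $k$, combined with the bound $|I| \le s-1$ from Lemma~\ref{lem:Iles-1}. Your version is marginally cleaner in that it obtains item 2 by re-applying the characterization with $k' = s+1$ instead of re-verifying minimality vertex by vertex, and it explicitly checks $|I| > 0$ (a hypothesis of Lemma~\ref{lem:ChracterizationCvC'v}) and the claim $|C| = s$ in the final biconditional, both of which the paper leaves implicit.
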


\begin{proof}
    Let $k'$ be an integer, $k' \ge k$, and suppose that $G$ is a minimal $(s,
    k)$-polar obstruction such that $|C| = s$. Thus, we have from
    \Cref{lem:Iles-1} that, either $|I| < s-1$, or $k > s$ and $G \cong F_s$. In
    the latter case the result follows because $F_s$ is a minimal $(s,
    k')$-polar obstruction. Otherwise, we have that $|I| < s-1 < k-1$ and, by
    \Cref{lem:ChracterizationCvC'v} and the observation that precede it, $G$ is
    a minimal $(s, k')$-polar obstruction. The assertion follow easily from
    here.

    To prove item 2, assume again that $G$ is a minimal $(s, k)$-polar
    obstruction such that $|C| = s$. Thus, since $s+1 \le k$, it is clear that
    $G$ is not an $(s, s+1)$-polar graph. Let $v$ be a vertex of $G$. Clearly,
    if $v \in S$, then $G -v$ is a split graph, hence a $(s, s+1)$-polar graph.
    Additionally, we have from \Cref{lem:Iles-1} that $|I| \le s-1$, so it
    follows from \Cref{theo:DegreeChar(sk)polIPS} that $G-v$ is $(s, s+1)$-polar
    whenever $v \in C$. Notice that, from \Cref{theo:fin(sk)obsminIPSmin3}, we
    have that $|I| < k-1$. Thus, it follows from \Cref{lem:ChracterizationCvC'v}
    and \Cref{theo:DegreeChar(sk)polIPS} that, if $v \in I$, then $G-v$ also is
    an $(s, s+1)$-polar graph. Therefore, $G$ is not an $(s, s+1)$-polar graph
    but any vertex deleted subgraph of $G$ is, so we have that $G$ is a minimal
    $(s, s+1)$-polar obstruction. The assertion follow easily from here

    The last statement is an immediate consequence of items 1 and 2.
\end{proof}

\begin{corollary} \label{cor:psSinftyObsmin}
    Let $s$ be an integer, $s \ge 3$. Any pseudo-split minimal $(s,
    \infty)$-polar obstruction has order at most $2s + 4$, and the bound is
    tight. In consequence, there are finitely many minimal $(s, \infty)$-polar
    obstructions.
\end{corollary}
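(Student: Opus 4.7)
The plan is to derive the corollary directly from the two main results already in place: Lemma \ref{lem:sInftyIFFss+1}, which characterizes pseudo-split minimal $(s, \infty)$-polar obstructions as exactly the pseudo-split minimal $(s, s+1)$-polar obstructions $G = (C, S, I)$ satisfying $|C| = s$, and Theorem \ref{theo:fin(sk)obsminIPSmin3}, which bounds the order of any pseudo-split minimal $(s, k)$-polar obstruction by $s + k + 3$ when $s, k \ge 3$.

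First I would observe that, since $s \ge 3$ and $k = s+1 \ge 4$, both hypotheses of Theorem \ref{theo:fin(sk)obsminIPSmin3} are met. Let $G$ be any pseudo-split minimal $(s, \infty)$-polar obstruction. By Lemma \ref{lem:sInftyIFFss+1} (item 2), $G$ is also a minimal $(s, s+1)$-polar obstruction, and so Theorem \ref{theo:fin(sk)obsminIPSmin3} yields
\[
  |V_G| \le s + (s+1) + 3 = 2s + 4,
\]
which is the claimed upper bound.

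Next I would verify tightness by exhibiting an extremal example. The graph $F_s$ introduced just before Lemma \ref{lem:Iles-1} has pseudo-split partition $(C, S, I)$ with $|C| = s$, $|S| = 5$, and $|I| = s-1$, giving exactly $2s + 4$ vertices. As noted after its definition, $F_s$ is a minimal $(s, k)$-polar obstruction whenever $k > s$; in particular it is a minimal $(s, s+1)$-polar obstruction with $|C| = s$, so by Lemma \ref{lem:sInftyIFFss+1} it is a minimal $(s, \infty)$-polar obstruction. Hence the bound $2s + 4$ is attained.

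Finally, the finiteness statement is immediate from the order bound: there are only finitely many (isomorphism classes of) graphs on at most $2s + 4$ vertices, so in particular only finitely many pseudo-split minimal $(s, \infty)$-polar obstructions. I do not anticipate any real obstacle here; the work has been done in the preceding lemmas, and this corollary is just a matter of assembling the two bounds and pointing at $F_s$ for tightness.
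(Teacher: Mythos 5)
Your proposal is correct and follows essentially the same route as the paper: reduce to the $(s,s+1)$ case via Lemma \ref{lem:sInftyIFFss+1}, apply the bound $s+k+3$ from Theorem \ref{theo:fin(sk)obsminIPSmin3} with $k=s+1$, and cite $F_s$ (of order $s+5+(s-1)=2s+4$) for tightness. Nothing to add.
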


\begin{proof}
    Let $G$ be a pseudo-split minimal $(s, \infty)$-polar obstruction. We have
    from \Cref{lem:sInftyIFFss+1} that $G$ is a minimal $(s, s+1)$-polar
    obstruction so, by \Cref{theo:fin(sk)obsminIPSmin3}, the order of $G$ is at
    most $2s +4$. The bound is tight because $F_s$ is a pseudo-split minimal
    $(s, \infty)$-polar obstruction with $2s + 4$ vertices.
\end{proof}

\subsection{Colorings of pseudo-split graphs} \label{sec:colPS}

Brandstädt \cite{brandstadDM186, brandstadDM152, brandstadDAM89} introduced the
$(k, \ell)$-colorings in the 1990s, when he proved that the problem of deciding
whether a graph admits a $(k, \ell)$-coloring is polynomial time solvable if $k,
\ell \le 2$, and NP-complete otherwise.

In this brief section we study some coloring parameters of pseudo-split graphs,
including $(k,\ell)$-colorings, co-chromatic number, and bi-chromatic number.

\begin{lemma} \label{lem:kColPS}
    Let $G$ be a pseudo-split graph, and let $k$ be an integer, $k \ge 2$. Then,
    $G$ is $k$-colorable if and only if $G$ is a $(K_{k+1}, C_5 \oplus
    K_{k-2})$-free graph.
\end{lemma}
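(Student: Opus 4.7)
The plan is to reduce the statement to the structural characterizations already proved in \Cref{theo:recognitionSplit,theo:recognitionIPS}. A pseudo-split graph $G$ is either a split graph or an imperfect pseudo-split graph with a (unique) pseudo-split partition $(C,S,I)$ where $G[S]\cong C_5$; and in the second case \Cref{theo:recognitionIPS} gives the crucial numerical data $\omega(G)=|C|+2$ and $\chi(G)=|C|+3$. Everything will boil down to comparing these parameters with $k$ and exhibiting the forbidden induced subgraphs when the comparison fails.

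For the forward direction I would argue contrapositively. If $G$ contains $K_{k+1}$, then $\omega(G)\ge k+1$ and $G$ is not $k$-colorable. Assume instead that $G$ contains a copy of $C_5\oplus K_{k-2}$; since $\chi(C_5)=3$ and joining adds the chromatic numbers, $\chi(C_5\oplus K_{k-2})=3+(k-2)=k+1$, and as chromatic number is monotone under induced subgraphs this forces $\chi(G)\ge k+1$. Hence $G$ cannot be $k$-colored.

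For the backward direction, suppose $G$ is $(K_{k+1},\,C_5\oplus K_{k-2})$-free. If $G$ is a split graph then, by \Cref{theo:recognitionSplit}, $\chi(G)=\omega(G)\le k$ and we are done. If instead $G$ is imperfect pseudo-split with partition $(C,S,I)$, the bound $\omega(G)\le k$ gives $|C|+2\le k$, i.e.\ $|C|\le k-2$. I now claim the strict inequality $|C|\le k-3$ holds. Indeed, since $C$ is completely adjacent to $S$ and $G[S]\cong C_5$, any subset $C'\subseteq C$ with $|C'|=k-2$ would induce $K_{k-2}\oplus C_5$ inside $G$, contradicting the second forbidden subgraph hypothesis. (Note the boundary case $k=2$ is covered, since then $K_{k-2}=K_0$ is the empty graph and the forbidden subgraph is simply $C_5$, which is present in every imperfect pseudo-split graph.) Therefore $|C|\le k-3$, and by \Cref{theo:recognitionIPS} we conclude $\chi(G)=|C|+3\le k$, so $G$ is $k$-colorable.

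There is no real obstacle: the only point requiring a brief verification is the claim that in a pseudo-split graph every induced $C_5$ must be $G[S]$ itself (so the $K_{k-2}$ part of a copy of $C_5\oplus K_{k-2}$ has to sit inside~$C$), which is exactly the content of the \Cref{rem:degrees}-style observation about imperfect pseudo-split graphs recorded immediately after \Cref{theo:charHsplit}.
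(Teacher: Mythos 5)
Your proposal is correct and follows essentially the same route as the paper: heredity of $k$-colorability for the forward direction, and for the converse the split/imperfect dichotomy with the key observation that $C_5\oplus K_{k-2}$-freeness forces $|C|\le k-3$. The only cosmetic difference is that you invoke $\chi(G)=|C|+3$ from \Cref{theo:recognitionIPS} where the paper instead exhibits an explicit $k$-coloring of $C\cup S\cup I$; both close the argument equally well.
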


\begin{proof}
    It is easy to verify that both, $K_{k+1}$ and $C_5 \oplus K_{k-2}$, are not
    $k$-colorable graphs. Thus, since being $k$-colorable is a hereditary
    property, it follows that any $k$-colorable graph is $(K_{k+1}, C_5 \oplus
    K_{k-2})$-free.

    Conversely, suppose that $G$ is a $(K_{k+1}, C_5 \oplus K_{k-2})$-free
    pseudo-split graph, and let $(C, S, I)$ be a pseudo-split partition of $G$.
    Since split graphs are perfect, if $S = \varnothing$, then $G$ is a
    $K_{k+1}$-free perfect graph, hence a $k$-colorable graph. Otherwise, $G$ is
    an imperfect pseudo-split graph such that $|C| \le k-3$, or $G$ would have
    $C_5 \oplus K_{k-2}$ as an induced subgraph; in this case a proper
    $k$-coloring of $G$ could be obtained assign colors $1, \dots, k-3$ to the
    vertices of $C$, coloring $S$ in a proper way with colors $k-2, k-1$ and
    $k$, and assigning color $k$ to every vertex of $I$.
\end{proof}

\begin{theorem} \label{theo:kEllIPS}
    Let $G$ be an imperfect pseudo-split graph with pseudo-split partition $(C,
    S, I)$, and let $k$ and $\ell$ be nonnegative integers. The following
    statements hold true.
    \begin{enumerate}
        \item $G$ is a $(k,0)$-graph if and only if $|C| \le k-3$;
        
        \item $G$ is a $(0, \ell)$-graph if and only if $|I| \le \ell -3$;
        
        \item $G$ is not a $(1,1)$-graph;

        \item If $k$ and $\ell$ are positive integers, and $k + \ell \ge 3$,
        then $G$ is a $(k, \ell)$-graph.
    \end{enumerate}
    Particularly, $\chi(G) = |C|+3$ and $\theta(G) = |I| + 3$.
\end{theorem}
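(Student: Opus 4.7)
The plan is to handle items 1--4 in turn, using \Cref{lem:kColPS} for the $k$-colorings, a complementation trick for the clique-cover case, and a single explicit coloring of $G$ for item 4; the chromatic and clique-cover identities drop out as corollaries of items 1 and 2.

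For item 1, I would apply \Cref{lem:kColPS}, which reduces $k$-colorability (for $k \ge 2$) to $\{K_{k+1}, C_5 \oplus K_{k-2}\}$-freeness. By \Cref{theo:recognitionIPS} we have $\omega(G) = |C|+2$, and since $G[C \cup S] \cong K_{|C|} \oplus C_5$ (with the induced $C_5$ in $G$ being the unique copy given by the pseudo-split partition), the first forbidden subgraph condition becomes $|C| \le k-2$ and the second becomes $|C| \le k-3$; the small cases $k \in \{0,1,2\}$ are dispatched directly using that $G$ has an induced $C_5$. Item 2 then follows by applying item 1 to $\overline{G}$, which is itself an imperfect pseudo-split graph with partition $(I, S, C)$ since $\overline{C_5} \cong C_5$. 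Item 3 is immediate: a $(1,1)$-graph is split, hence $C_5$-free, while $G[S] \cong C_5$.

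The bulk of the work lies in item 4, where I would write out one $(1,2)$-coloring of $G$, obtain a $(2,1)$-coloring by the dual argument on $\overline{G}$, and then reduce every admissible pair $(k,\ell)$ (with $k,\ell \ge 1$, $k+\ell \ge 3$) to one of these two by noting that $(k,\ell)$ coordinatewise dominates either $(1,2)$ or $(2,1)$ and that $(k,\ell)$-colorings are closed under padding with empty color classes. For the explicit $(1,2)$-coloring, labelling the $5$-cycle on $S$ as $v_1 v_2 v_3 v_4 v_5 v_1$, I would take $A = I \cup \{v_3, v_5\}$ (independent, since $I$ is completely nonadjacent to $S$ and $v_3 v_5 \notin E_G$), $B_1 = C \cup \{v_1, v_2\}$ (a clique, since $C$ is completely adjacent to $S$ and $v_1 v_2 \in E_G$), and $B_2 = \{v_4\}$. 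The identities $\chi(G) = |C|+3$ and $\theta(G) = |I|+3$ then follow as the minima of the inequalities in items 1 and 2.

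The main ``obstacle'' is really just bookkeeping: tracking the small-$k$ degeneracies in item 1 (where $K_{k-2}$ may be empty or meaningless) and checking that the monotonicity reduction in item 4 genuinely covers every admissible pair $(k,\ell)$ without ever needing a new construction.
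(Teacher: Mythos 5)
Your proposal is correct and follows essentially the same route as the paper: item 1 via \Cref{lem:kColPS} (with $\omega(G)=|C|+2$ making the $K_{k+1}$ condition redundant), item 2 by complementation, item 3 from the induced $C_5$, item 4 by lifting a $(1,2)$-coloring of $G[S]$ to $(A\cup I, B\cup C)$, and the formulas for $\chi$ and $\theta$ as the extremal cases of items 1 and 2. If anything, you are slightly more careful than the paper in item 4, which only exhibits the $(1,2)$-coloring and leaves the $(2,1)$ case (needed to cover all pairs with $k,\ell\ge 1$, $k+\ell\ge 3$) to the implicit complement argument that you spell out.
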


\begin{proof}
    It follows from \Cref{lem:kColPS} that, for any integer $k \ge 2$, an
    imperfect pseudo-split graph is $k$-colorable if and only if it is a $C_5
    \oplus K_{k-2}$-free graph. Thus, $G$ is a $(k, 0)$-graph if and only if
    $|C| < k-2$.
    
    The second item follows from the first one since a graph $G$ is $(0,
    \ell)$-colorable if and only if $\overline{G}$ is $(\ell, 0)$-colorable, and
    the complement of an imperfect pseudo-split graph is also an imperfect
    pseudo-split graph. Item 3 is due to $G$ has an induced $C_5$, and $C_5$ is
    not a $(1, 1)$-graph.

    Notice that for the last item it is enough to prove that $G$ is a $(1,
    2)$-graph, but this is trivially true since, for any $(1, 2)$-coloring $(A,
    B)$ of $G[S]$, $(A \cup I, B \cup C)$ is a $(1,2)$-coloring of $G$. The last
    statement is a direct consequence of the first two items, although it is
    also a direct consequence of \Cref{theo:recognitionIPS}.
\end{proof}

\begin{corollary} \label{cor:chiCIPS}
    If $G$ is an imperfect pseudo-split graph, then $\chi^c(G) = 3$.
\end{corollary}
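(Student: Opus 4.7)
The plan is to read $\chi^c(G)$ off directly from \Cref{theo:kEllIPS}. Recall that $\chi^c(G)$ is the minimum $z$ such that $G$ admits a $(k,\ell)$-coloring with $k+\ell=z$, so I need to show both that $G$ admits some $z$-cocoloring with $z=3$, and that no $z$-cocoloring exists for $z\le 2$.

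For the upper bound $\chi^c(G)\le 3$, I would simply invoke item 4 of \Cref{theo:kEllIPS} with $k=1$ and $\ell=2$: since $1+2=3\ge 3$ and both values are positive, $G$ is a $(1,2)$-graph, exhibiting a $3$-cocoloring.

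For the lower bound $\chi^c(G)\ge 3$, I would rule out each of the three partitions summing to $2$. A $(2,0)$-coloring would require $|C|\le 2-3=-1$ by item 1, which is impossible. Symmetrically, a $(0,2)$-coloring would require $|I|\le -1$ by item 2. Finally, item 3 says $G$ is not a $(1,1)$-graph. Thus $G$ admits no $2$-cocoloring, and combined with the upper bound we conclude $\chi^c(G)=3$.

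Since every step is an immediate citation of \Cref{theo:kEllIPS}, there is no real obstacle; the only thing to double-check is that the edge case $s=0$ or $\ell=0$ is handled by items 1 and 2 (giving the contradictions above) rather than item 4, which explicitly requires $k,\ell\ge 1$.
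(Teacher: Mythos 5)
Your proof is correct and follows essentially the same route as the paper: both obtain the upper bound by citing item~4 of \Cref{theo:kEllIPS} with $(k,\ell)=(1,2)$. For the lower bound the paper instead observes that $C_5$ is an induced subgraph of $G$ and is not $2$-cocolorable, whereas you rule out $(2,0)$, $(1,1)$ and $(0,2)$ directly from items~1--3 of the same theorem; these are interchangeable one-line arguments (and your ``$s=0$'' in the last sentence should read ``$k=0$'').
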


\begin{proof}
    As we proved in \Cref{theo:kEllIPS}, $G$ is a $(1, 2)$-colorable graph,
    hence a $3$-cocolorable graph. Thus $\chi^c(G) \ge 3$. In addition, $C_5$ is
    an induced subgraph of $G$, but it is not a $2$-cocolorable graph, so
    $\chi^c(G) > 2$, and the result follows.
\end{proof}

\begin{lemma} \label{lem:subsBicol}
    Let $z$ be a positive integer, and let $\mathcal{F}^b(z)$ be the set of
    minimal $z$-bicolorable obstructions. Then,
    \[ \mathcal{F}^b(z) \subseteq \bigcup_{i=0}^z \mathcal{F}(i, z-i), \]
    where $\mathcal{F}(k, \ell)$ stands for the set of minimal $(k,
    \ell)$-obstructions.
\end{lemma}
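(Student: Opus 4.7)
The plan is to unpack the definition of $z$-bicolorability and then essentially chase the quantifiers. Recall that $G$ is $z$-bicolorable if it is $(k,z-k)$-colorable for \emph{every} $k \in \{0,1,\ldots,z\}$. So the negation is purely existential: $G$ fails to be $z$-bicolorable precisely when there \emph{exists} some $i \in \{0,1,\ldots,z\}$ for which $G$ is not $(i, z-i)$-colorable. That observation will do almost all the work.

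Let $G \in \mathcal{F}^b(z)$. By the preceding remark, pick any $i \in \{0,1,\ldots,z\}$ such that $G$ is not $(i, z-i)$-colorable; my goal is to show $G \in \mathcal{F}(i, z-i)$. Since $G$ already fails to be $(i,z-i)$-colorable, it suffices to verify the minimality condition, namely that $G - v$ is $(i, z-i)$-colorable for every vertex $v$ of $G$. But that is immediate: by the minimality of $G$ as a $z$-bicolorable obstruction, $G - v$ is $z$-bicolorable, hence $(k, z-k)$-colorable for every $k \in \{0,\ldots,z\}$, and in particular for $k = i$.

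This gives $G \in \mathcal{F}(i, z-i) \subseteq \bigcup_{i=0}^{z} \mathcal{F}(i, z-i)$, finishing the argument. There is no genuine obstacle here; the only thing to be careful about is not to conflate $\mathcal{F}^b(z)$-minimality (with respect to the property of being $z$-bicolorable) with $\mathcal{F}(k,\ell)$-minimality (with respect to the property of being $(k,\ell)$-colorable), and to notice that the former is strong enough to imply the latter for the specific $i$ witnessing that $G$ is not $z$-bicolorable. No additional hypotheses on $z$ or on the graph class are needed, so the inclusion holds in full generality.
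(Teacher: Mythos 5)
Your proof is correct and follows essentially the same argument as the paper's: extract a witness $i$ for which $G$ fails to be $(i,z-i)$-colorable, then use minimality as a $z$-bicolorable obstruction to get that every vertex-deleted subgraph is $(i,z-i)$-colorable, so $G \in \mathcal{F}(i,z-i)$. The paper merely phrases the same quantifier-chase in terms of containment of forbidden induced subgraphs.
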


\begin{proof}
    Notice that a graph $H$ is a minimal $z$-bicolorable obstruction if and only
    if there exists an integer $i$ with $0 \le i \le z$, such that $H$ contains
    a graph $F_i \in \mathcal{F}(i,z-i)$ as an induced subgraph and, for any
    vertex $v$ of $H$ and every integer $j \in \set{0, \dots, z}$, $H-v$ does not
    contain any graph of $\mathcal{F}(j,z-j)$ as an induced subgraph.
    Particularly, for some integer $i \in \set{0, \dots, z}$, $H$ is not an $(i,
    z-i)$-graph, but every vertex-deleted subgraph of $H$ is, so $H$ is a
    minimal $(i, z-i)$-obstruction.
\end{proof}

\begin{theorem}
    Let $z$ be a positive integer, and let $k$ and $\ell$ be nonnegative
    integers. Let $\mathcal{F}^b_{ps}(z)$ be the set of pseudo-split minimal
    $z$-bicolorable obstructions, and let $\mathcal{F}_{ps}(k, \ell)$ be the set
    of pseudo-split minimal $(k, \ell)$-obstructions. Then,
    \[ \mathcal{F}_{ps}^b(1) = \set{K_2, \overline{K_2}}, \hspace{1em}
    \mathcal{F}_{ps}^b(2) = \set{K_3, C_5, \overline{K_3}}, \]
    and for any integer $z$ with $z \ge 3$,
    \[ \mathcal{F}_{ps}^b(z) = \set{K_{z+1}, C_5 \oplus K_{z-2}, \overline{K_{z+1}},
    C_5 + \overline{K_{z-2}}}. \]
\end{theorem}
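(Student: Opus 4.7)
The plan is to apply \Cref{lem:subsBicol}, which bounds $\mathcal{F}_{ps}^b(z)$ by the union of the pseudo-split minimal $(i, z-i)$-obstructions over $0 \le i \le z$, and then to verify which of the resulting candidates are genuinely minimal $z$-bicolorable obstructions.

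First, I would catalog the pseudo-split minimal $(i, z-i)$-obstructions. For the extremes $i = 0$ and $i = z$, \Cref{lem:kColPS} together with a complementation argument identifies them as $K_{z+1}, C_5 \oplus K_{z-2}$ and $\overline{K_{z+1}}, C_5 + \overline{K_{z-2}}$, respectively (with the convention $K_0 = \varnothing$, so $C_5 \oplus K_0 = C_5$ merges two candidates when $z = 2$). For intermediate $1 \le i \le z-1$ with $z \ge 3$, \Cref{theo:kEllIPS}(4) guarantees every imperfect pseudo-split graph is $(i, z-i)$-colorable, and split graphs are $(1,1)$-colorable and therefore $(i, z-i)$-colorable; so no intermediate candidates arise. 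For $z = 2$ the only intermediate case is $\mathcal{F}_{ps}(1,1)$, whose elements are the minimal split obstructions that are pseudo-split, giving only $C_5$. For $z = 1$ only the extremes contribute. Assembling these candidate sets yields exactly the lists in the statement.

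Second, I would verify that each listed graph is actually a minimal $z$-bicolorable obstruction, the containment $\subseteq$ being already forced by \Cref{lem:subsBicol} and the candidate analysis. Failure of $z$-bicolorability is immediate: each listed graph fails either $(z, 0)$- or $(0, z)$-colorability by construction. For $K_{z+1}$ and $\overline{K_{z+1}}$ the proper induced subgraphs are cliques or anticliques of order at most $z$, whose $z$-bicolorability is routine. For $C_5 \oplus K_{z-2}$ (and, dually, $C_5 + \overline{K_{z-2}}$) I distinguish by where the deleted vertex lies: removing a vertex from $K_{z-2}$ leaves $C_5 \oplus K_{z-3}$, an imperfect pseudo-split graph with $|C| = z-3$ and $|I| = 0$ whose $z$-bicolorability is read off directly from \Cref{theo:kEllIPS}; removing a vertex from $C_5$ leaves the split graph $P_4 \oplus K_{z-2}$, with clique $\set{b, c} \cup V(K_{z-2})$ of size $z$ and independent part $\set{a, d}$ of size $2$.

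The main obstacle will be confirming $(k, \ell)$-colorability of $P_4 \oplus K_{z-2}$ for every decomposition $k + \ell = z$. Here $(z, 0)$ follows from $\chi = \omega = z$ (perfection of split graphs); $(0, z)$ follows from $\theta = 2 \le z$ after refining a clique cover into singletons; and for $k, \ell \ge 1$ the underlying split partition yields a $(1,1)$-coloring that is easily refined into a $(k, \ell)$-coloring by distributing $\set{a, d}$ among the $k$ independent classes and placing the large clique among the $\ell$ clique classes. The complement $P_4 + \overline{K_{z-2}}$ is handled symmetrically, and the small cases $z \in \set{1, 2}$ are finished by direct inspection.
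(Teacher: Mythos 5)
Your proposal is correct and follows essentially the same route as the paper: apply \Cref{lem:subsBicol} and then compute each $\mathcal{F}_{ps}(i,z-i)$ via \Cref{theo:kEllIPS} (equivalently \Cref{lem:kColPS}) and complementation, with the intermediate cases vanishing because imperfect pseudo-split graphs are $(k,\ell)$-colorable for $k,\ell\ge 1$, $k+\ell\ge 3$, and split graphs are $(1,1)$-colorable. Your explicit verification that each candidate (in particular $C_5\oplus K_{z-2}$, via the vertex-deleted subgraphs $C_5\oplus K_{z-3}$ and $P_4\oplus K_{z-2}$) really is a minimal $z$-bicolorable obstruction supplies the reverse inclusion that the paper asserts without detail.
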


\begin{proof}
    It follows from \Cref{theo:kEllIPS} that $\mathcal{F}_{ps}(1,0) = \set{K_2},
    \mathcal{F}_{ps}(1,1) = \set{C_5}$, and $\mathcal{F}_{ps}(k, 0) = \set{K_{k+1},
    C_5 \oplus K_{k-2}}$  for any integer $k \ge 2$. In addition, it also
    follows from \Cref{theo:kEllIPS} that, for any positive integers $k$ and
    $\ell$ with $k + \ell \ge 3$, $\mathcal{F}_{ps}(k, \ell) = \varnothing$.

    Then, since $H \in \mathcal{F}_{ps}(k, \ell)$ if and only if $\overline{H}
    \in \mathcal{F}_{ps}(\ell, k)$, we have from \Cref{lem:subsBicol} and the
    observations in the above paragraph that, for any positive integer $z$,
    \[ \mathcal{F}^b_{ps}(z) = \bigcup_{i=0}^z \mathcal{F}_{ps}(i,z-i), \]
    where the result follows.
\end{proof}

\begin{corollary} \label{cor:chiBIPS}
    Let $G$ be an imperfect pseudo-split graph with pseudo-split partition $(C,
    S, I)$. Then $\chi^b(G) = \max\set{|C|+3, |I|+3} = \max \set{\chi(G),
    \theta(G)}$. Particularly, $G$ is not a $2$-bicolorable graph.
\end{corollary}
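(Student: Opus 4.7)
The plan is to read off the result from Theorem~\ref{theo:kEllIPS}, which already classifies exactly which $(k,\ell)$-colorings $G$ admits. The second equality $\max\{|C|+3,|I|+3\} = \max\{\chi(G),\theta(G)\}$ is immediate from the last sentence of Theorem~\ref{theo:kEllIPS}, so the task reduces to proving $\chi^b(G) = \max\{|C|+3,|I|+3\}$. Recall that by definition $G$ is $z$-bicolorable iff $G$ is $(k,z-k)$-colorable for every $k \in \{0,1,\dots,z\}$, so I would simply check which of the values $(k,z-k)$ can or cannot be realized using items (1)--(4) of Theorem~\ref{theo:kEllIPS}.

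For the lower bound $\chi^b(G) \ge \max\{|C|+3,|I|+3\}$, I would note that a $z$-bicoloring must in particular yield a $(z,0)$-coloring and a $(0,z)$-coloring. By items (1) and (2) these force $z \ge |C|+3$ and $z \ge |I|+3$ respectively, so $\chi^b(G) \ge \max\{|C|+3,|I|+3\}$.

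For the upper bound, set $z^{\ast} = \max\{|C|+3,|I|+3\}$ and observe that $z^{\ast} \ge 3$. I would then verify $(k, z^{\ast}-k)$-colorability case by case: the two boundary cases $k=0$ and $k=z^{\ast}$ are handled by items (2) and (1) using $z^{\ast} \ge |I|+3$ and $z^{\ast} \ge |C|+3$; every other partition has both parts positive and total at least $3$, so item (4) applies directly. Hence $G$ is $z^{\ast}$-bicolorable, giving $\chi^b(G) \le z^{\ast}$.

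The final assertion, that $G$ is not $2$-bicolorable, is then just a restatement of item (3) of Theorem~\ref{theo:kEllIPS}: a $2$-bicoloring would require a $(1,1)$-coloring, which $G$ does not admit because it contains an induced $C_5$. There is no serious obstacle here; the only subtle point worth stating explicitly is that the $k+\ell = 2$ case (with $k,\ell \ge 1$) falls outside the range covered by item (4), which is precisely the mechanism keeping $\chi^b(G) \ge 3$ even for pseudo-split graphs with $|C| = |I| = 0$.
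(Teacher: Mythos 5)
Your proof is correct. The paper states this corollary without proof, positioning it as an immediate consequence of the preceding theorem on the minimal $z$-bicolorable obstructions $\mathcal{F}^b_{ps}(z)$; you instead read everything off directly from Theorem~\ref{theo:kEllIPS}, which is essentially the same argument one level earlier (that obstruction theorem is itself just a repackaging of Theorem~\ref{theo:kEllIPS}), and your case analysis — items (1) and (2) for the two boundary cases giving the lower bound, item (4) for all interior splittings giving the upper bound, and item (3) for the failure of $2$-bicolorability — is complete and accurate.
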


\begin{corollary} \label{cor:cochBichPS}%
  Chromatic and bichromatic numbers can be determined on imperfect pseudo-split
  graphs in $O(|V|)$-time  from their degree sequences. Additionally, the
  cochromatic number of imperfect pseudo-split graphs can be determined in
  constant time.
\end{corollary}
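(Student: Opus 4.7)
The plan is to read off all three parameters directly from the structural data that Theorem~\ref{theo:recognitionIPS} already extracts from the degree sequence, so very little new work is required. First I would recall that, given a graph $G$ of order $n$ whose degree sequence $d_1 \ge \dots \ge d_n$ is known, Theorem~\ref{theo:recognitionIPS} lets us compute the integer $q = \max\set{i : d_i \ge i+4} \cup \set{0}$ in $O(|V|)$-time (a single scan), and this $q$ is exactly $|C|$ when $G$ is an imperfect pseudo-split graph. Since $|S| = 5$ is fixed, we also get $|I| = n - q - 5$ in constant time once $q$ is known.

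Next I would handle the chromatic and clique-cover numbers: Theorem~\ref{theo:recognitionIPS} already states $\chi(G) = q+3 = |C|+3$ and $\theta(G) = n - q - 2 = |I|+3$, so both are computed in $O(|V|)$-time from the degree sequence. The bichromatic number then follows from Corollary~\ref{cor:chiBIPS}, which gives $\chi^b(G) = \max\set{|C|+3,\, |I|+3}$; taking the maximum of two already-computed integers is $O(1)$ on top of the $O(|V|)$-time work, so $\chi^b(G)$ is also computable in linear time.

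Finally, the cochromatic number requires no information about $G$ beyond the fact that it is an imperfect pseudo-split graph: Corollary~\ref{cor:chiCIPS} asserts $\chi^c(G) = 3$ unconditionally on this class, so the value can be returned in constant time without even inspecting the degree sequence.

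There is no real obstacle here, as each claim reduces to invoking an already-proved result; the only point worth stating carefully is the constant-time bound for $\chi^c$, which is not a computation at all but simply the observation that Corollary~\ref{cor:chiCIPS} makes the answer independent of the input.
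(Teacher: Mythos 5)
Your proposal is correct and follows essentially the same route as the paper: invoke Theorem~\ref{theo:recognitionIPS} for the linear-time computation of $\chi(G)$ and $\theta(G)$ from the degree sequence, combine with Corollary~\ref{cor:chiBIPS} to get $\chi^b(G)=\max\set{\chi(G),\theta(G)}$ in linear time, and use Corollary~\ref{cor:chiCIPS} for the constant-time answer $\chi^c(G)=3$. Your write-up is in fact slightly more explicit than the paper's (which contains a small slip, saying ``cochromatic'' where it means ``bichromatic'' when citing Corollary~\ref{cor:chiBIPS}), but the substance is identical.
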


\begin{proof}
    From \Cref{theo:recognitionIPS}, we have that $\chi(G)$ and $\theta(G)$ can
    be computed in $O(|V|)$-time from the degree sequence of an imperfect
    pseudo-split graph. Then, we have from \Cref{cor:chiBIPS} that also the
    cochromatic number of imperfect pseudo-split graphs can be computed in
    $O(|V|)$-time. The last part of the statement follows from
    \Cref{cor:chiCIPS}.
\end{proof}

\section{$2K_2$- and $C_4$-split graphs} \label{chap:2K2split}

\Cref{theo:charHsplit} provide us of a characterization of $2K_2$-split graphs
based on their degree sequences. Next, we characterize $2K_2$-split graphs by
their forbidden induced subgraphs.

\begin{theorem}
    A graph $G$ is a $2K_2$-split graph if and only if $G$ has not induced
    subgraphs isomorphic to the graphs depicted in \Cref{fig:2K2splitObsmin}.
\end{theorem}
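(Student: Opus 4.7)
My plan is to prove both directions of the equivalence, exploiting that the class of $2K_2$-split graphs is hereditary (as noted just after \Cref{theo:recognitionSplit}, since $2K_2$ is a minimal split obstruction). Hereditariness immediately reduces the characterization to identifying the minimal obstructions.

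For the necessity direction, I would verify, for each individual graph $F$ pictured in \Cref{fig:2K2splitObsmin}, that $F$ itself fails to be $2K_2$-split and that every one-vertex deletion $F-v$ \emph{is} $2K_2$-split. The first check amounts to showing that $F$ is neither split (i.e., it contains an induced $2K_2$, $C_4$, or $C_5$) nor admits a valid partition $(C,S,I)$ with $G[S]\cong 2K_2$; when $F$ already contains an induced $2K_2$, one uses that in a strict $2K_2$-split graph $G[S]$ is the \emph{unique} induced $2K_2$ (the Remark following \Cref{theo:charHsplit}), so any $F$ exhibiting two or more induced copies of $2K_2$ that cannot be separated is automatically excluded. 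These are routine finite checks.

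For the sufficiency direction, assume $G$ induces no graph from the figure. I split into two cases. If $G$ contains no induced $2K_2$, then (since $C_4$ and $C_5$ will be among the listed obstructions, being $2K_2$-free and non-split) $G$ is $\{2K_2,C_4,C_5\}$-free, hence split by \Cref{theo:recognitionSplit}, and therefore $2K_2$-split with $S=\varnothing$. If $G$ contains an induced $2K_2$ on $S=\{a_1,b_1,a_2,b_2\}$ with edges $a_1b_1$ and $a_2b_2$, I will show that the partition $(C,S,I)$ defined by $C=\{v\notin S:v\text{ is completely adjacent to }S\}$ and $I=\{v\notin S:v\text{ is completely nonadjacent to }S\}$ is a $2K_2$-split partition. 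This requires three sub-steps: (i) every $v\notin S$ lies in $C\cup I$; (ii) $G[C]$ is a clique; (iii) $G[I]$ is independent. In (ii) and (iii), two offending vertices together with $S$ yield a five- or six-vertex configuration that must match one of the depicted obstructions; in (i) the vertex $v$ together with $S$ (and possibly one additional vertex of $C$ or $I$) yields such a configuration.

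The main obstacle will be step (i) of the sufficiency argument: the vertex $v$ can interact with $S$ in many adjacency patterns (indexed by which of $a_1,b_1,a_2,b_2$ are neighbors of $v$), and each problematic pattern must be matched to one of the forbidden graphs, up to the symmetry exchanging the two edges of $S$. I would organize this by $|N(v)\cap S|\in\{1,2,3\}$ and, within $|N(v)\cap S|=2$, distinguish the case where the two neighbors form an edge of $S$ from the case where they belong to different edges; each sub-case produces a specific small induced subgraph. The list of graphs in \Cref{fig:2K2splitObsmin} is precisely designed so that this enumeration is exhaustive, and checking that none of the configurations arising from case analysis escapes the list is what makes the argument tight.
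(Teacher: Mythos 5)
Your proposal is correct and follows essentially the same route as the paper: handle the $2K_2$-free case via the split-graph characterization, and otherwise fix an induced $2K_2$ on $S$, show every outside vertex is completely adjacent or completely nonadjacent to $S$ (via the case analysis on $N(v)\cap S$ producing $K_2+P_3$, $K_2+K_3$, $P_5$, or the co-banner), and then use $3K_2$- and $C_4$-freeness to make $I$ independent and $C$ a clique. The only quibble is cosmetic: the offending configuration in step (ii) is an induced $C_4$ on four vertices (two nonadjacent vertices of $C$ plus one endpoint from each edge of $S$), not a five- or six-vertex graph.
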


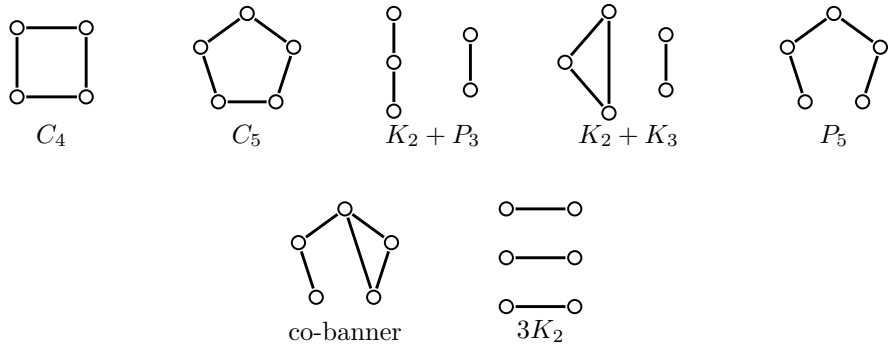
\begin{figure}[!ht]
    \centering
    \begin{tikzpicture}[scale=0.65]

    \begin{scope}[scale=1, xshift=0cm, yshift=0cm]
    \foreach \i in {0,...,3}
    \node (\i) [vertex] at ({(\i*90)+45}:1){};

    \node (x) [rectangle] at (0,-1.5){$C_4$};
    \end{scope}

    \foreach \i in {0,...,3}
    \draw let \n1={int(mod(\i+1,4))} in [edge] (\i) to node [above] {} (\n1);

    \begin{scope}[scale=1, xshift=4cm, yshift=0cm]
    \foreach \i in {0,...,4}
    \node (\i) [vertex] at ({(\i*72)+90}:1){};

    \node (x) [rectangle] at (0,-1.5){$C_5$};
    \end{scope}

    \foreach \i in {0,...,4}
    \draw let \n1={int(mod(\i+1,5))} in [edge] (\i) to node [above] {} (\n1);

    \begin{scope}[scale=1, xshift=8cm, yshift=0cm]
    \node (0) [vertex] at (-1,0){};
    \node (1) [vertex] at (-1,1){};
    \node (2) [vertex] at (-1,-1){};

    \node (3) [vertex] at (45:0.8){};
    \node (4) [vertex] at (315:0.8){};

    \node (x) [rectangle] at (-0.2,-1.5){$K_2+P_3$};
    \end{scope}

    \draw [edge] (0) to node [above] {} (1);
    \draw [edge] (0) to node [above] {} (2);

    \draw [edge] (3) to node [above] {} (4);

    \begin{scope}[scale=1, xshift=12cm, yshift=0cm]
    \node (0) [vertex] at (180:1.5){};
    \node (1) [vertex] at (120:1.2){};
    \node (2) [vertex] at (240:1.2){};

    \node (3) [vertex] at (45:0.8){};
    \node (4) [vertex] at (315:0.8){};

    \node (x) [rectangle] at (-0.2,-1.5){$K_2+K_3$};
    \end{scope}

    \draw [edge] (0) to node [above] {} (1);
    \draw [edge] (0) to node [above] {} (2);
    \draw [edge] (1) to node [above] {} (2);

    \draw [edge] (3) to node [above] {} (4);

    \begin{scope}[scale=1, xshift=16cm, yshift=0cm]
    \foreach \i in {0,...,4}
    \node (\i) [vertex] at ({(\i*72)+90}:1){};

    \node (x) [rectangle] at (0,-1.5){$P_5$};
    \end{scope}

    \foreach \i in {0,1,3,4}
    \draw let \n1={int(mod(\i+1,5))} in [edge] (\i) to node [above] {} (\n1);

    \begin{scope}[scale=1, xshift=6cm, yshift=-4cm]
    \foreach \i in {0,...,4}
    \node (\i) [vertex] at ({(\i*72)+90}:1){};

    \node (x) [rectangle] at (0,-1.5){co-banner};
    \end{scope}

    \foreach \i in {0,1,3,4}
    \draw let \n1={int(mod(\i+1,5))} in [edge] (\i) to node [above] {} (\n1);
    \draw [edge] (3) to node [above] {} (0);

    \begin{scope}[scale=1, xshift=10cm, yshift=-4cm]
    \node (0) [vertex] at (-0.7,1){};
    \node (1) [vertex] at (-0.7,0){};
    \node (2) [vertex] at (-0.7,-1){};

    \node (3) [vertex] at (0.7,1){};
    \node (4) [vertex] at (0.7,0){};
    \node (5) [vertex] at (0.7,-1){};

    \node (x) [rectangle] at (0,-1.5){$3K_2$};
    \end{scope}

    \draw [edge] (0) to node [above] {} (3);
    \draw [edge] (1) to node [above] {} (4);
    \draw [edge] (2) to node [above] {} (5);

    \end{tikzpicture}
    \caption{Minimal $2K_2$-split obstructions.}
    \label{fig:2K2splitObsmin}
\end{figure}

\begin{proof}
    It is a routine job to check that any graph in \Cref{fig:2K2splitObsmin} is
    a minimal $2K_2$-split obstruction. To prove the necessary condition, let
    $G$ be a graph without induced subgraphs isomorphic to the graphs in
    \Cref{fig:2K2splitObsmin}. If $G$ is $2K_2$-free, then $G$ is a split graph,
    and hence a $2K_2$-split graph, so let us assume that $G$ has an induced copy
    of $2K_2$ with vertex set $S$. Notice that any vertex in $V_G \setminus S$
    is either completely adjacent to $S$ or completely nonadjacent to it, or $G$
    would have some of the forbidden induced subgraphs.

    Since $G$ is a $3K_2$-free graph, any two vertices $u,v \in V_G \setminus S$
    that are completely nonadjacent to $S$ must be nonadjacent to each other. In
    addition, if there exist two nonadjacent vertices $u,v \in V_G \setminus S$
    that are completely adjacent to $S$, then $G$ has an induced $C_4$, which is
    impossible.

    Hence, if we set $C$ the set of all vertices in $V_G \setminus S$ that are
    completely adjacent to $S$, and $I$ the set of vertices in $V_G \setminus S$
    that are completely nonadjacent to $S$, then $(C, S, I)$ is a $2K_2$-split
    partition of $G$.
\end{proof}

In this section we study polarity on $2K_2$-split graphs. Observe that a graph
is $2K_2$-split if and only if its complement is $C_4$-split. Thus, since the
complement of an $(s, k)$-polar graph is a $(k, s)$-polar graph, we have that by
simple arguments of complements any result about $(s, k)$-polarity on
$2K_2$-split graphs is equivalent to a dual result on $C_4$-split graphs. As the
reader will notice, although some results are similar to those proved in
\Cref{sec:PS} for pseudo-split graphs, there are also remarkable differences.

\subsection{Polarity on $2K_2$-split graphs}

As in the case of pseudo-split graphs, any $2K_2$-split graph is polar.
Moreover, if $G = (C, S, I)$ is a $2K_2$-split graph, then $(C, S \cup I)$ is a
unipolar partition of $G$, so $G$ is unipolar, and hence polar. Similarly, any
$C_4$-split graph $G = (C, S, I)$ has a unipolar partition, namely $(C \cup
\set{u,v}, I \cup (S \setminus \set{u,v}))$, where $u$ and $v$ are two adjacent
vertices of $S$. Thus, any $C_4$-split graph is unipolar.

With the next simple propositions we completely characterize the $2K_2$- and
$C_4$-split graphs that admit a $(1,k)$-polar partition for any value of $k$.
Observe that, from such results, it follows that $(1,k)$-polarity (including
monopolarity) can be checked in $O(|V|)$-time from the degree sequence of any
$2K_2$- or $C_4$-split graph. We start with an observation that is to
$2K_2$-split graphs as Remark \ref{rem:degrees} is to pseudo-split graphs.

\begin{remark} \label{rem:degrees2K2}%
  Let $G = (C, S, I)$ be a strict $2K_2$-split graph, and let $c=|C|$ and
  $i=|I|$. If $u \in V_G$, then $u \in C$ if and only if $\Deg{u} \ge c+3$, $u
  \in S$ if and only if $\Deg{u} = c+1$, and $u \in I$ if and only if $\Deg{u}
  \le c$. Moreover, $\Deg{u} = c+3$ if and only if $u$ is a vertex in $C$ that
  is completely nonadjacent to $I$, and $\Deg{u} = c$ if and only if $u$ is a
  vertex in $I$ that is completely adjacent to $C$.
\end{remark}

\begin{theorem} \label{theo:2K2(1k)obsmin}%
	Let $k$ be an integer, $k \ge 2$, and let $G = (C, S, I)$ be a $2K_2$-split
	graph. The following sentences are equivalent.
    \begin{enumerate}
        \item $G$ is a $(1,k)$-polar graph.
        \item $G$ is a $\{K_2 \oplus 2K_2, K_1 \oplus (2K_2 +
        \overline{K_{k-1}})\}$-free graph.
        \item If $G$ is a strict $2K_2$-split graph, then $G$ has at most one
        vertex whose degree is at least $|C|+3$ and, if $\Delta_G \ge |C|+3$,
        then $G$ has at most $k-1$ vertices of degree $|C|$.
    \end{enumerate}
	In consequence, the only $2K_2$-split minimal monopolar obstruction is $K_2
	\oplus 2K_2$, and $G$ is monopolar if and only if it has at most one vertex
	whose degree is at least $|C|+3$.
\end{theorem}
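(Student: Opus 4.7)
The plan is to establish the equivalence of the three conditions by a cycle of implications, after a short preliminary case distinction. If $G$ is a (non-strict) split graph, it is $(1,1)$-polar and hence $(1,k)$-polar for every $k \geq 1$, and being $2K_2$-free it contains neither of the forbidden subgraphs in (2); moreover, the degree condition (3) is explicitly stated only for strict $2K_2$-split graphs, so in this case all three statements hold trivially. Hence I may (and will) assume that $G$ is a strict $2K_2$-split graph. By the uniqueness remark for $H$-split partitions with $H \in \{2K_2, C_4, C_5\}$, the set $S$ is then the only induced $2K_2$ in $G$, and the $(C,S,I)$-partition is unique.

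For (1) $\Rightarrow$ (2), I would verify directly that neither $K_2 \oplus 2K_2$ nor $K_1 \oplus (2K_2 + \overline{K_{k-1}})$ admits a $(1,k)$-polar partition. For $K_2 \oplus 2K_2$: any independent set contains at most one vertex from the $K_2$-part and at most two (non-adjacent) vertices from the $2K_2$-part, and a short enumeration shows that every such choice leaves an induced $P_3$ on the complementary side. For $K_1 \oplus (2K_2 + \overline{K_{k-1}})$: if the central vertex lies in $A$, then $B$ contains $2K_2 + \overline{K_{k-1}}$, whose number of components is too large; if it lies in $B$, then $B \setminus \{v\}$ must be a clique, forcing $A$ to contain an edge from the $2K_2$-part. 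Heredity of $(1,k)$-polarity then yields the implication.

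The key step is the equivalence (2) $\Leftrightarrow$ (3), which I would derive from the uniqueness of $S$ together with Remark~\ref{rem:degrees2K2}. On one hand, the degree stratification gives $\Deg{u} \geq c+3$ if and only if $u \in C$, so ``at most one vertex of degree $\geq |C|+3$'' is precisely $|C| \leq 1$; on the other hand, any induced $K_2 \oplus 2K_2$ in $G$ must use $S$ as its $2K_2$-part so its two join-vertices are forced into $C$, while conversely any two vertices of $C$ together with $S$ induce $K_2 \oplus 2K_2$. Thus the absence of this subgraph is exactly $|C| \leq 1$. A parallel argument handles the second forbidden subgraph: its center must lie in $C$ and its $\overline{K_{k-1}}$-part must consist of pairwise nonadjacent neighbors of that center in $I$, which (given $|C| \leq 1$) translates via Remark~\ref{rem:degrees2K2} into the stated bound on the number of vertices of degree $|C|$.

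Finally, (3) $\Rightarrow$ (1) is an explicit construction. If $|C| = 0$, take $A = I$ and $B = S$. If $|C| = 1$ with $C = \{v\}$, take $A = \{v\} \cup (I \setminus N(v))$ and $B = S \cup (N(v) \cap I)$; then $A$ is independent, and $G[B] = 2K_2 + \overline{K_{|N(v) \cap I|}}$, whose number of components is bounded by the second part of (3). The ``in consequence'' statement about monopolarity follows by letting $k \to \infty$, making the second part of (3) vacuous, so $G$ is monopolar if and only if $|C| \leq 1$; and $K_2 \oplus 2K_2$ is the unique minimal monopolar obstruction because every proper induced subgraph of it has $|C| \leq 1$ and is hence monopolar. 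The main difficulty I anticipate is the careful bookkeeping in (2) $\Leftrightarrow$ (3) to match the number of neighbors of the unique $C$-vertex in $I$ with the number of vertices of degree $|C|$, and to pin down the correct combinatorial constant coming from the forbidden $\overline{K_{k-1}}$ factor.
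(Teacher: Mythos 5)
Your architecture is sound, and for the core equivalence $(1)\Leftrightarrow(2)$ it matches the paper in substance: the paper argues by showing that $K_2\oplus 2K_2$ and $K_1\oplus(2K_2+\overline{K_{k-1}})$ are the \emph{only} $2K_2$-split minimal $(1,k)$-polar obstructions (taking a hypothetical minimal obstruction and forcing it to be one of the two), whereas you close a cycle of implications with an explicit partition in $(3)\Rightarrow(1)$; these are interchangeable, and your verification that the two graphs are not $(1,k)$-polar is correct. The reduction to strict $2K_2$-split graphs and the identification of the join-vertices of any induced $K_2\oplus 2K_2$ with vertices of $C$ are also fine.

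The gap is exactly the bookkeeping you flagged, and it does not close as the statement is printed. Carrying out your own $(2)\Leftrightarrow(3)$ translation: given $|C|\le 1$, say $C=\set{v}$, an induced $K_1\oplus(2K_2+\overline{K_{k-1}})$ exists if and only if $v$ has at least $k-1$ neighbours in $I$, i.e.\ (by Remark~\ref{rem:degrees2K2}) if and only if $G$ has at least $k-1$ vertices of degree $|C|$. So condition (2) is equivalent to ``$|C|\le 1$ and at most $k-2$ vertices of degree $|C|$,'' not the ``at most $k-1$'' appearing in item (3). Your construction in $(3)\Rightarrow(1)$ exposes the same problem: $G[B]=2K_2+\overline{K_{|N(v)\cap I|}}$ has $2+|N(v)\cap I|$ components, so you need $|N(v)\cap I|\le k-2$; with $|N(v)\cap I|=k-1$, which item (3) permits, you get $k+1$ cliques, and such a graph is genuinely not $(1,k)$-polar (any independent $A$ either contains $v$, forcing $S\cup(N(v)\cap I)\subseteq B$ and hence at least $k+1$ components, or puts $v$ in $B$, where $v$'s component must contain nonadjacent vertices of $S$). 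Hence $(3)\Rightarrow(1)$ is false for the printed constant; item (3) carries an off-by-one error, which the paper itself implicitly corrects when it applies this theorem in the proof of \Cref{thm:2K2sKrecog} with the bound $k-2$. Your proof becomes correct once $k-1$ is replaced by $k-2$ in item (3); as it stands, the constant you left ``to be pinned down'' cannot be made to work, and the right move is to state the corrected bound explicitly rather than leave it open.
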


\begin{proof}
  Te equivalence between the second and the third item follows easily from
	\Cref{rem:degrees2K2}, so we only prove the equivalence between 1 and 2. It is
	a routine to prove that both, $K_2 \oplus 2K_2$ and $K_1 \oplus(2K_2 +
	(k-1)K_1)$ are $2K_2$-split minimal $(1, k)$-polar obstructions. Hence, every
	$(1,k)$-polar graph is a $\{K_2 \oplus 2K_2, K_1 \oplus (2K_2 +
	\overline{K_{k-1}})\}$-free graph.
    
  For the converse, let us assume that $G = (C, S, I)$ is a $2K_2$-split minimal
	$(1, k)$-polar obstruction; notice that $S \ne \varnothing$, or $G$ would be a
	$(1,1)$-polar graph, and hence a $(1, k)$-polar graph. Also, notice that $G$
	does not have isolated vertices, because if $v$ was a vertex of degree zero in
	$G$, and $(A, B)$ is a $(1, k)$-polar partition of $G-v$, then $(A \cup
	\set{v}, B)$ would be a ($1, k)$-polar partition of $G$, contradicting the
	election of $G$. Particularly, each vertex in $I$ has a neighbor in $C$.

	Now, if $|C| \ge 2$, then $K_2 \oplus 2K_2$ is an induced subgraph of $G$, so
	$G\cong K_2\oplus 2K_2$. In addition, if $C = \varnothing$, then $G$ clearly
	is a $(1, 2)$-polar graph, and hence a $(1,k)$-polar graph, which is
	impossible. Thus, if $G \not\cong K_2\oplus 2K_2$, $|C|=1$. Additionally, if
	$|I| < k-1$, then $(C, S \cup I)$ would be a $(1, k)$-polar partition of $G$,
	but that is absurd, so it must be the case that $|I| \ge k-1$, and it follows
	from the previous observations that $G$ has $K_1 \oplus (2K_2 + (k-1)K_1)$ as
	an induced subgraph, so $G \cong K_1 \oplus (2K_2 + (k-1)K_1)$ by the
	minimality of $G$. Thus, the only minimal $(1,k)$-polar obstructions that are
	$2K_2$-split are precisely $K_2 \oplus 2K_2$ and $K_1 \oplus (2K_2 +
	\overline{K_{k-1}})$, implying that every $2K_2$-split graph that does not
	contain an induced copy of such graphs is $(1,k)$-polar.
\end{proof}

\begin{theorem} \label{theo:C4splitMonop}%
  Let $k$ be an integer, $k \ge 2$, and let $G = (C, S, I)$ be a $C_4$-split
graph. The following sentences are equivalent.
  \begin{enumerate}
      \item $G$ is a $(1,k)$-polar graph.
      \item $G$ is a $K_1 \oplus C_4$-free
      graph.
      \item If $G$ is a strict $C_4$-split graph, then $\Delta_G \le 2$.
  \end{enumerate}
In consequence, the only $C_4$-split minimal monopolar obstruction is $K_1
\oplus C_4$, and $G$ is monopolar if and only if $\Delta_G \le 2$.
\end{theorem}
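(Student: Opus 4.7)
The plan is to mirror the structure of \Cref{theo:2K2(1k)obsmin}, proving first (2) $\Leftrightarrow$ (3) via the degree structure of strict $C_4$-split graphs, and then (1) $\Leftrightarrow$ (2). For (2) $\Leftrightarrow$ (3), the key observation is that in a strict $C_4$-split graph $G = (C, S, I)$, every vertex of $C$ is completely adjacent to the $C_4$ induced on $S$ and therefore has degree at least $4$; conversely, when $C = \varnothing$, every vertex of $S$ has degree exactly $2$ and every vertex of $I$ is isolated. Thus $\Delta_G \le 2$ iff $C = \varnothing$, which in turn is equivalent to the absence of an induced $K_1 \oplus C_4$, whose apex would necessarily lie in $C$ since vertices of $I$ are non-adjacent to $S$. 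For non-strict $G$ (i.e., split $G$), both (2) and (3) hold vacuously.

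For (1) $\Rightarrow$ (2), it suffices to show that $K_1 \oplus C_4$ admits no $(1, k)$-polar partition. In any candidate partition $(A, B)$ the apex $v$ lies in $A$ or in $B$. If $v \in A$, then independence forces $A = \set{v}$ and $B = V(C_4)$, but $C_4$ contains an induced $P_3$ and so fails to be a cluster. If $v \in B$, then since $B$ is a disjoint union of cliques and $v$ is adjacent to every vertex of the $C_4$, the clique component of $v$ can contain at most two vertices of the $C_4$ (which must moreover be adjacent to each other in the $C_4$), and the remaining vertices of the $C_4$ either violate the independence of $A$ or, if sent to $B$, violate the cluster structure by being adjacent to a vertex of $v$'s component while not lying in it. This short finite check is the main delicate step.

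For (2) $\Rightarrow$ (1), let $G = (C, S, I)$ be a $K_1 \oplus C_4$-free $C_4$-split graph. If $G$ is not strict then $G$ is split and hence $(1,1)$-polar. Otherwise $G$ is strict, and $C = \varnothing$, since any vertex of $C$ would combine with $S$ to give an induced $K_1 \oplus C_4$. Then $G$ is the disjoint union of the $4$-cycle induced on $S = \set{a_1, a_2, a_3, a_4}$ (cyclically labelled) and the independent set $I$, and the partition $A = \set{a_1, a_3} \cup I$, $B = \set{a_2, a_4}$ is a $(1, 2)$-polar partition, so $G$ is $(1, k)$-polar for every $k \ge 2$.

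The monopolarity consequence then follows immediately: the above shows that $(1,k)$-polarity for $C_4$-split graphs does not depend on $k \ge 2$, so $K_1 \oplus C_4$ is the unique minimal $C_4$-split monopolar obstruction, and in the strict case monopolarity coincides with $\Delta_G \le 2$. I do not foresee any substantive obstacle beyond the finite case analysis verifying that $K_1 \oplus C_4$ is not $(1, k)$-polar.
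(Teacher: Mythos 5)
Your proposal is correct and follows essentially the same route as the paper: establish that $K_1 \oplus C_4$ is a minimal $(1,k)$-polar (indeed $(1,\infty)$-polar) obstruction, and for the converse note that a strict $C_4$-split graph avoiding it must have $C = \varnothing$, in which case splitting the $4$-cycle into its two diagonals yields an explicit $(1,2)$-polar partition. The only differences are that you spell out the finite check that $K_1 \oplus C_4$ is not $(1,k)$-polar (which the paper dismisses as routine) and the degree-based equivalence of items 2 and 3 (which the paper omits), both of which you handle correctly.
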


\begin{proof}
	We only prove the equivalence between 1 and 2. It is a routine to show that
	$C_4 \oplus K_1$ is a $(1, \infty)$-polar obstruction such that any
	vertex-deleted subgraph is $(1, 2)$-polar, so we have that $C_4 \oplus K_1$ is
	a minimal $(1, k)$-polar obstruction for any integer $k \ge 2$.
	
	Now, let $G = (C, S, I)$ be a $C_4$-split graph. If $C = \varnothing$, for any
	two nonadjacent vertices $u,v \in S$, $(I \cup (S \setminus \set{u,v}),
	\set{u,v})$ is a $(1,2)$-polar partition of $G$, so $G$ is $(1, k)$-polar.
	Otherwise, we have that $|C| \ge 1$, so $C_4 \oplus K_1$ is an induced
	subgraph of $G$ and $G$ is not a $(1, k)$-polar graph.
\end{proof}

\subsubsection{$2K_2$-split minimal $(s, k)$-polar obstructions}

In this section, we provide upper bounds for the order of minimal $2K_2$-split
minimal $(s,k)$-polar obstructions. We start with a complete characterization of
$2K_2$-split graphs that admit an $(s, k)$-polar partition.

\begin{theorem} \label{thm:2K2Char(sk)pol} %
  Let $s$ and $k$ be integers, $s, k\ge 2$, and let $G = (C, S, I)$ be a strict
	$2K_2$-split graph. Let $c$ and $i$ be the cardinalities of $C$ and $I$,
	respectively. The following statements hold true.
	\begin{enumerate}
		\item If $s \ge c$ and $k \ge i+2$, then $G$ is an $(s, k)$-polar graph.

		\item If $s \ge c+2$ and $k \ge i+1$, then $G$ is an $(s, k)$-polar graph.

		\item If $s \le c-1$ and $k \le i$, then $G$ is not an $(s, k)$-polar
		graph.

		\item If $s \le c-1$ and $k \ge i+1$, then $G$ is an $(s, k)$-polar graph
		if and only if there is a subset $C'$ of $C$ with at least $c-s+2$
		vertices that is completely nonadjacent to $I$.

		\item If $s \ge c+1$ and $k \le i$, then $G$ is an $(s, k)$-polar graph
		if and only if there exists a subset $I'$ of $I$ with at least $i-k+2$
		vertices that satisfies some of the following conditions:
		\begin{enumerate}
			\item $I'$ is completely adjacent to $C$.

			\item There exists a vertex $v \in C$ such that $I'$ is completely
			adjacent to $C \setminus \set{v}$ and $v$ is completely
			nonadjacent to $I'$.
		\end{enumerate}

		\item If $s = c$ and $k \le i$, then $G$ is an $(s, k)$-polar graph if
		and only if there exists a subset $I'$ of $I$ with at least $i-k+2$
		vertices and a vertex $v \in C$ such that $I'$ is completely adjacent to
		$C \setminus \set{v}$ and $v$ is completely nonadjacent to $I'$.

		\item If $s = c$ and $k = i+1$, then $G$ is an $(s, k)$-polar graph if
		and only if some of the following sentences is satisfied:
		\begin{enumerate}
			\item there exists a subset $C'$ of $C$ with at least $c-s+2$
			vertices that is completely nonadjacent to $I$.

			\item there exists a nonempty subset $I'$ of $I$ and a vertex $v \in
			C$ such that $I'$ is completely adjacent to $C \setminus \set{v}$ and
			$v$ is completely nonadjacent to $I'$.
		\end{enumerate}

		\item If $s = c+1$ and $k = i+1$, then $G$ is an $(s, k)$-polar graph if
		and only if some of the following sentences is satisfied:
		\begin{enumerate}
			\item there exists a nonempty subset $C'$ of $C$ that is completely
			nonadjacent to $I$.

			\item there exists a nonempty subset $I'$ of $I$ such that satisfies
			some of the following conditions:
			\begin{enumerate}
				\item $I'$ is completely adjacent to $C$.

				\item There is a vertex $v \in C$ that is completely
				nonadjacent to $I'$ and such that $I'$ is completely adjacent to
				$C \setminus \set{v}$.
			\end{enumerate}
		\end{enumerate}
		
	\end{enumerate}
\end{theorem}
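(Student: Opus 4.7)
My plan is to analyze all $(s,k)$-polar partitions $(A,B)$ of $G$ by classifying them according to how the four vertices of $S$, which induce a $2K_2$, distribute between $A$ and $B$. Since $G[A]$ must be $\overline{P_3}$-free and $G[B]$ must be $P_3$-free, $A \cap S$ can only induce one of four subgraphs: the empty graph (Case~0), a single vertex (Case~1), an edge of $S$ (Case~2a), or two nonadjacent vertices of $S$ (Case~2b); any other configuration would place an induced $K_1+K_2$ inside $G[A]$. In Cases~0, 1, and~2b, any vertex of $C$ placed in $B$ would be adjacent to two nonadjacent vertices of $B \cap S$ and so produce a $P_3$ in $G[B]$, forcing $C \subseteq A$; dually, in Case~2a any vertex of $I$ placed in $A$ would need to share a part with both endpoints of the chosen edge (impossible, since these lie in distinct parts), so $I \subseteq B$ and the cluster condition forces $B \cap C$ to be completely nonadjacent to $I$.

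Next, I would determine the structure of $A \cap I$. In Cases~1 and~2b, every $v \in A \cap I$ must be completely adjacent to $C$, since otherwise $v$ would end up in the same part as both a vertex of $A \cap S$ and a nonadjacent $c \in C$ adjacent to that $S$-vertex. Case~0 splits further: in subcase~0a every $v \in A \cap I$ is completely adjacent to $C$, and in subcase~0b there is a single distinguished $w \in C$ completely nonadjacent to $A \cap I$ while every other $C$-vertex is completely adjacent to $A \cap I$. No third sub-case is possible, as two distinct distinguished $C$-vertices would force a forbidden merging of $C$-parts. A direct count then gives the following tabulation: Cases~0a (with $A \cap I \neq \emptyset$), 1, and~2b produce $c+1$ parts in $G[A]$ and $i - |A \cap I| + 2$ components in $G[B]$; Case~0b produces $c$ parts and $i - |A \cap I| + 2$ components; and Case~2a produces $|A \cap C| + 2$ parts and exactly $i+1$ components, with the additional requirement that $B \cap C$ be completely nonadjacent to $I$.

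Each of the eight items then follows by cross-referencing the parameter hypotheses with this table. Items~1 and~2 are obtained by exhibiting the explicit partition from Case~0 (with $A=C$, $B = S \cup I$) and from Case~2a (with $A = C \cup \{x_1,y_1\}$, $B = \{x_2,y_2\} \cup I$), respectively. Item~3 follows by observing that every case forces either $s \geq c+1$ or $k \geq i+1$, contradicting the hypothesis. Items~4--6 each isolate exactly one admissible configuration so that the characterization reduces to a single $C'$- or $I'$-condition: Case~2a for item~4, the family 0a/0b/1/2b collapsing to the stated pair of alternatives for item~5, and only Case~0b surviving for item~6. In items~7 and~8 the parameter hypotheses sit exactly at the boundary where a ``$C'$-type'' partition (Case~2a, giving condition~(a)) and an ``$I'$-type'' partition (Cases~0a/0b/1/2b, giving condition~(b)) are simultaneously compatible, which is precisely why the characterization there splits into two alternatives. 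The main obstacle is the bookkeeping in Case~0 (the 0a vs.\ 0b split) together with the $c$ vs.\ $c+1$ part-count distinction depending on whether $A \cap S$ absorbs a nonempty $A \cap I$ into a common part; an off-by-one slip here would misclassify precisely the delicate boundary cases of items~7 and~8, where all configurations are simultaneously in play and the full enumeration must be carried out without omission.
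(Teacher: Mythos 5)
Your proposal is correct and rests on the same core arguments as the paper's proof: classify an $(s,k)$-polar partition $(A,B)$ by its trace on the induced $2K_2$, then use $P_3$-freeness of $G[B]$ and $\overline{P_3}$-freeness of $G[A]$ to force $C\subseteq A$ (when $B$ meets both components of $G[S]$) or $I\subseteq B$ (when one edge of $G[S]$ lies in $A$), and to pin down the adjacencies of $A\cap I$, respectively $B\cap C$. The only real difference is organizational --- you perform the classification once (cases $0,1,2a,2b$ with the $0a$/$0b$ split, the latter justified because $A\cap I$ is independent and hence lies in a single part of the complete multipartite graph $G[A]$) and read all eight items off a single part/component table, whereas the paper re-runs the relevant portion of this case analysis separately inside each item; your counts and the resulting conditions check out in every case, including the boundary items 7 and 8.
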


\begin{proof}
	Let $S=\set{u,v,x,y}$ and assume that $uv,xy\in E_G$.
	\begin{enumerate}
		\item It is enough to notice that $(C, S\cup I)$ is a $(c,i+2)$-polar
		partition of $G$.

		\item It is enough to notice that $(C\cup\set{u,v}, I\cup\set{x,y})$ is a
		$(c+2,i+1)$-polar partition of $G$.

		\item Assume for a contradiction that $G$ admits an $(s, k)$-polar partition
		$(A,B)$. Notice that $C\not\subseteq A$, so $C\cap B\ne \varnothing$, in
		which case the vertices of one component of $G[S]$ are in $A$. Suppose
		without loss of generality that $u,v\in A$. Then $I\cup\set{x,y}\subseteq
		B$, but in such a case $G[B]$ has at least $i+1$ components, a
		contradiction.

		\item For the necessary condition it is enough to notice that
		$(\set{u,v}\cup C\setminus C', \set{x,y}\cup C'\cup I)$ is an
		$(s,i+1)$-polar partition of $G$. For the sufficient condition, let us
		assume that $(A, B)$ is a polar partition of $G$. Since $s \le c-1$, $C \cap
		B \ne \varnothing$. Thus, we can assume without loss of generality that
		$\set{u,v} \in A$ and $\set{x,y} \in B$, and we have that $I \subseteq B$.
		Let $C' = C \cap B$. Hence, $C'$ is completely nonadjacent to $I$ and $|C'|
		\ge c-s+2$, so the result follows.

		\item For the necessary condition, notice that $(C \cup I', S \cup I
		\setminus I')$ is an $(c+1, k)$-polar partition of $G$. For the sufficient
		condition let $(A, B)$ be an $(s, k)$-polar partition of $G$. Since $2K_2$
		is not a complete multipartite graph, we have that $B \cap S \ne
		\varnothing$, which implies that $I \cap A \ne \varnothing$, and then both,
		$B \cap \set{u,v} \ne \varnothing$ and $B \cap \set{x,y} \ne \varnothing$.
		In consequence $C \subseteq A$. Set $I' = I \cap A$, and notice that, since
		$k \le i$, then $|I'| \ge i-k+2$. Moreover, since $G[A]$ is a complete
		$s$-partite graph and $I' \cup C \subseteq A$ we have that every vertex of
		$I'$ is either completely adjacent to $C$ or it is adjacent to any vertex of
		$C$ except for a vertex $v$. In addition, if a vertex $w \in C$ is adjacent
		to a vertex $z \in I'$, then $w$ is completely adjacent to $I'$. The
		statement easily follows form the above observations.

		\item For the necessary condition, notice that $(C\cup I', S\cup I \setminus
		I')$ is a $(c,k)$-polar partition of $G$. For the sufficient condition let
		$(A,B)$ be an $(s, k)$-polar partition of $G$. Since $2K_2$ is not a
		complete multipartite graph, $B \cap S \ne \varnothing$, which implies that
		$I \cap A \ne \varnothing$. But then, $B \cap \set{u,v} \ne \varnothing$ and
		$B \cap \set{x,y} \ne \varnothing$, implying that $C \subseteq A$. Set
		$I'=I\cap A$. Since $k\le i$, $|I'|\ge i-k+2$. Moreover, since $G[A]$ is a
		complete $c$-partite graph and $I'\cup C\subseteq A$ we have that every
		vertex of $I'$ is adjacent to any vertex of $C$ except for a vertex $v$. In
		addition, if a vertex $w\in C$ is adjacent to a vertex $z\in I'$, then $w$
		is completely adjacent to $I'$. The conclusion follows easily from here.

		\item For the necessary condition we only have to notice that, in case of
		$(a)$ occurs, $(\set{u,v} \cup C \setminus C', \set{x,y} \cup C' \cup I)$ is
		an $(s, k)$-polar partition of $G$ while, if $(b)$ occurs, then $(C \cup I',
		S \cup I \setminus I')$ is an $(s, k)$-polar partition of $G$. For the
		sufficient condition let $(A, B)$ be an $(s, k)$-polar partition of $G$. If
		$A \cap S \ne \varnothing$, then $B \cap C \ne \varnothing$, which implies,
		without loss of generality, that $\set{u,v} \subseteq A$ and $\set{x,y}
		\subseteq B$, and therefore $I \subseteq B$. Then, if $C' = C \cap B$, we
		have that $|C'| \ge c-s+2$ and $C'$ is completely nonadjacent to $I$.
		Otherwise, if $A \cap S = \varnothing$, $S \subseteq B$, and therefore $C
		\subseteq A$. Notice that, since $k = i+1$, $A \cap I \ne \varnothing$. Let
		$I' = A \cap I$. Since $C \cup I'\subseteq A$ and $A$ induces a complete
		$c$-partite graph, there exist a vertex $v \in C$ such that $I'$ is
		completely adjacent to $C\setminus\set{v}$ but $v$ is completely nonadjacent
		to $I'$.

		\item For the necessary condition, notice that in case that $(a)$ occurs,
		$(\set{u, v} \cup C \setminus C', \set{x, y} \cup C' \cup I)$ is an $(s,
		k)$-polar partition of $G$ while, if $(b)$ occurs, then $(C \cup I', S \cup
		I \setminus I')$ is an $(s, k)$-polar partition of $G$. For the sufficient
		condition, let us consider an $(s, k)$-polar partition $(A, B)$ of $G$. If
		$G[B \cap S]$ is connected, then the vertex set of one of the connected
		components of $G[S]$ is completely contained in $A$, let us say, without
		loss of generality, that $\set{u ,v} \subseteq A$. Observe that, in this
		case, $C \not \subseteq A$, so $C ' = C \cap B$ is not empty. In addition,
		$I \cap A = \varnothing$, so $I \subseteq B$. Thus, since $G[B]$ is a
		$P_3$-free graph, $C'$ is completely nonadjacent to $I$, and we have the
		case $(a)$ of the statement. Otherwise, $G[B]$ is disconnected, which
		implies that $C \cap B = \varnothing$ and $I \not \subseteq B$. Then, we
		have that $C \subseteq A$ and the set $I' = I \cap A$ is not empty. Hence,
		$C \cup I' \subseteq A$ and, since $G[A]$ is $\overline{P_3}$-free, we have
		that either $I'$ is completely adjacent to $C$, or there is a vertex $v \in
		C$ such that $I'$ is completely adjacent to $C \setminus \set{v}$ and $v$ is
		completely nonadjacent to $I'$, so item $(b)$ of the statement follows.
	\end{enumerate}
\end{proof}

The following propositions are consequences of \Cref{thm:2K2Char(sk)pol}. They
are intended to prove an upper bound for the order of $2K_2$-split minimal $(s,
k)$-polar obstructions for arbitrary integers $s$ and $k$.

Observe that, for a $2K_2$-split graph $G = (C, S, I)$, if some of $C, S$ or $I$
is an empty set, then $G$ is a $2$-polar graph. Hence, for any integers $s$ and
$k$ with $s, k \ge 2$, any $2K_2$-split minimal $(s, k)$-polar obstruction $G =
(C, S, I)$ is such that $C, S$ and $I$ are all of them nonempty sets. We will
use this observation in the following proofs without any explicit mention.

The following proposition is a direct consequence of item 3 of
\Cref{thm:2K2Char(sk)pol}.

\begin{lemma} \label{lem:2K2obsminBound3}%
  Let $s$ and $k$ be integers, $s, k \ge 2$, and let $G = (C, S, I)$ be a strict
	$2K_2$-split graph. The following assertions hold for $c = |C|$ and $i = |I|$.
	\begin{enumerate}
		\item If $c \ge s+2$ and $i \ge k$, for each vertex $v \in C$, $G-v$ is
		not an $(s, k)$-polar graph, so $G$ is not a minimal $(s, k)$-polar
		obstruction.

		\item If $c \ge s+1$ and $i \ge k+1$, for each vertex $v \in I$, $G-v$
		is not an $(s, k)$-polar graph, so $G$ is not a minimal $(s, k)$-polar
		obstruction.
	\end{enumerate}
\end{lemma}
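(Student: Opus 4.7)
The plan is to apply item 3 of \Cref{thm:2K2Char(sk)pol} directly to vertex-deleted subgraphs, after checking that deleting a vertex of $C$ (respectively, of $I$) leaves us with a strict $2K_2$-split graph whose parameters still fall into the range where that item guarantees non-$(s,k)$-polarity.

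First, I would observe that if $v \in C$, then $(C \setminus \set{v}, S, I)$ is a $2K_2$-split partition of $G-v$; since $S$ still induces $2K_2$, the graph $G - v$ is again a strict $2K_2$-split graph with clique part of size $c-1$ and independent part of size $i$. Under the hypothesis $c \ge s+2$ and $i \ge k$, we have $s \le (c-1) - 1$ and $k \le i$, so item 3 of \Cref{thm:2K2Char(sk)pol} applies to $G-v$ and yields that $G-v$ is not an $(s,k)$-polar graph. This proves the first statement of item~1; the fact that $G$ cannot be a minimal $(s,k)$-polar obstruction is then immediate, since minimality would force \emph{every} vertex-deleted subgraph of $G$ to be $(s,k)$-polar.

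For item~2, I would argue analogously: if $v \in I$, then $(C, S, I \setminus \set{v})$ is a $2K_2$-split partition of $G-v$, giving a strict $2K_2$-split graph with clique part of size $c$ and independent part of size $i-1$. The hypotheses $c \ge s+1$ and $i \ge k+1$ translate to $s \le c-1$ and $k \le i-1$, so once again item~3 of \Cref{thm:2K2Char(sk)pol} tells us that $G - v$ is not $(s,k)$-polar, and minimality of $G$ is ruled out exactly as before.

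There is no real obstacle here: the lemma is essentially a bookkeeping corollary of the earlier characterization theorem, and the only thing to verify carefully is that the induced subgraph $G-v$ remains a \emph{strict} $2K_2$-split graph (so that \Cref{thm:2K2Char(sk)pol} is applicable), which is clear since removing a vertex from $C$ or from $I$ leaves the $2K_2$ on $S$ untouched.
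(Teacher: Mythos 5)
Your proof is correct and matches the paper's intent exactly: the paper states this lemma as a direct consequence of item~3 of \Cref{thm:2K2Char(sk)pol}, and your argument simply spells out the parameter bookkeeping (that $G-v$ remains a strict $2K_2$-split graph with clique and independent parts of the right sizes so that item~3 applies). Nothing is missing.
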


\begin{lemma} \label{lem:2K2obsminBound4}%
	Let $s$ and $k$ be integers, $s, k \ge 2$, and let $G = (C, S, I)$ be a
	$2K_2$-split graph. If $|C| \ge s+2$ and $|I| \le k-1$, then $G$ is not a
	minimal $(s, k)$-polar obstruction.
\end{lemma}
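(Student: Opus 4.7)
The plan is to apply item 4 of Theorem~\ref{thm:2K2Char(sk)pol} in two places: once to $G$ itself, and once to a vertex-deletion $G - v$ chosen with some care. The hypotheses $|C| \ge s+2$ and $|I| \le k-1$ translate immediately into $s \le c-1$ and $k \ge i+1$, so item 4 is applicable. (If $G$ is a split graph rather than strict $2K_2$-split, then $G$ is already $(1,1)$-polar and hence $(s,k)$-polar, so trivially not a minimal obstruction; assume from now on that $G$ is strict.)

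If $G$ itself is $(s,k)$-polar, we are done. Otherwise, item 4 says the set $N \subseteq C$ of vertices completely nonadjacent to $I$ must satisfy $|N| \le c-s+1$. I would then choose a vertex $v$ as follows: take $v \in N$ if $N$ is nonempty, and any $v \in C$ otherwise (note $C$ is nonempty since $c \ge s+2 \ge 4$). The deleted graph $G - v$ is still strict $2K_2$-split with parts $(C \setminus \{v\}, S, I)$, and since $c - 1 \ge s + 1$ and $|I| \le k - 1$, item 4 again applies to it. The vertices of $C \setminus \{v\}$ completely nonadjacent to $I$ form exactly $N \setminus \{v\}$, so $G - v$ is $(s,k)$-polar if and only if $|N \setminus \{v\}| \ge (c-1) - s + 2 = c - s + 1$. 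In both branches of our choice of $v$ we have $|N \setminus \{v\}| \le c - s$, which is strictly below this threshold; hence $G - v$ is not $(s,k)$-polar, contradicting the minimality of $G$.

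The only genuine subtlety is the case split on $N$: if $|N| = c - s + 1$ and we instead chose $v \notin N$, the entire set $N$ would survive inside $C \setminus \{v\}$ and meet the $(s,k)$-polarity threshold for $G - v$, which is precisely what we must avoid. Removing a vertex of $N$ trims this set by one and brings it below threshold, while when $N$ is empty any deletion works since $|N \setminus \{v\}| = 0 < c-s+1$. Apart from this bookkeeping, the proof is a direct consequence of item 4 of Theorem~\ref{thm:2K2Char(sk)pol}.
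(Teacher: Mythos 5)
Your proof is correct and follows essentially the same route as the paper: both arguments hinge on item 4 of Theorem~\ref{thm:2K2Char(sk)pol} and on counting the vertices of $C$ that are completely nonadjacent to $I$. The only difference is one of execution; where the paper first deduces that every vertex of $C$ has a neighbor in $I$ and then deletes $c-s-1$ vertices of $C$ to reach a non-polar proper induced subgraph, you delete a single well-chosen vertex, which is slightly more direct.
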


\begin{proof}
  Let $c = |C|$ and $i = |I|$. Suppose for a contradiction that $G$ is a minimal
	$(s, k)$-polar obstruction. Notice that, for each vertex $v \in C$, $(C
	\setminus \set{v}, S, I)$ is the $2K_2$-split partition of $G-v$ and $|C
	\setminus \set{v}| = c-1 \ge s+1$.

	Since $G$ is a minimal $(s, k)$-polar obstruction, for each $v \in C$, the
	graph $G-v$ is an $(s, k)$-polar graph, which implies, by item 4 of
	\Cref{thm:2K2Char(sk)pol}, that there is a subset $C'_v$ of $C \setminus
	\set{v}$ with at least $c-s+1$ vertices that is completely nonadjacent to
	$I$. In addition, also by item 4 of \Cref{thm:2K2Char(sk)pol}, since $G$ is
	not an $(s, k)$-polar graph, each vertex $v \in C$ is adjacent to at least
	one vertex in $I$.

	Let $H$ be a graph obtained from $G$ by deleting $c-s-1$ vertices of $C$.
	Then $H$ has a $2K_2$-split partition $(C^\ast, S, I)$, with $|C^\ast| =
	s+1$. Notice that each $v \in C^\ast \subseteq C$ has at least one neighbor
	in $I$, which implies that the only subset $C'$ of $C^\ast$ that is
	completely nonadjacent to $I$ is the empty set. Thus, we have from item 4 of
	\Cref{thm:2K2Char(sk)pol} that $H$ is nos an $(s, k)$-polar graph, but that
	is impossible since $H$ is a proper induced subgraph of $G$, which is by
	assumption a minimal $(s, k)$-polar obstruction.
\end{proof}

Next, we identify some particular minimal $(s, k)$-polar obstructions.

\begin{remark} \label{rem:2K2some(sk)obsmin}
	Let $s$ and $k$ be integers, $s \ge 2$.
	\begin{enumerate}
		\item The strict $2K_2$-split graph $G = (C, S, I)$ such that $|C| = s$,
		$|I| = 1$, and $C$ is completely adjacent to $I$, is a minimal $(s,
		2)$-polar obstruction.

		\item Let $k \ge 2$, and let $G = (C, S ,I)$ be the strict $2K_2$-split
		graph such that $|C| = s+1$, $|I| = 1$, and for two vertices $u$ and $v$
		in $C$, $C' = C \setminus \{u,v\}$ is completely adjacent to $I$, and
		$\set{u,v}$ is completely nonadjacent to $I$. Then, $G$ is a minimal
		$(s, k)$-polar obstruction.

		\item Let $k \ge 3$, and let $G = (C, S, I)$ be the strict $2K_2$-split
		graph such that $|C| = s+1$, $|I| = 1$, and for a vertex $u \in C$, $C'
		= C \setminus \{u\}$ is completely adjacent to $I$, and $u$ is
		completely nonadjacent to $I$. Then, $G$ is a minimal $(s, k)$-polar
		obstruction.
  \end{enumerate}
\end{remark}

\begin{lemma} \label{lem:Improve2K2obsminBound4} %
  Let $s$ and $k$ be integers, $s, k \ge 2$, and let $G = (C, S, I)$ be a
	$2K_2$-split graph. If $|C| = s+1$ and $|I| = k$, then $G$ is not a minimal
	$(s, k)$-polar obstruction.
\end{lemma}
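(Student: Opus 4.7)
My plan is to argue by contradiction: suppose $G$ is a minimal $(s,k)$-polar obstruction with $|C| = s+1$ and $|I| = k$, so in particular $S \ne \varnothing$. By item~3 of \Cref{thm:2K2Char(sk)pol} (applied with $s \le |C| - 1$ and $k \le |I|$) the graph $G$ itself is already not $(s,k)$-polar, so the whole content of minimality is that every proper induced subgraph of $G$ \emph{is} $(s,k)$-polar. I will exhibit a proper induced subgraph of $G$ that is not $(s,k)$-polar, which yields the desired contradiction.

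The first move is to use minimality to locate a vertex of $I$ with high $C$-degree. Fix any $v \in C$; by minimality $G-v$ is $(s,k)$-polar, and its $2K_2$-split partition $(C \setminus \{v\}, S, I)$ has parameters $c = s$ and $i = k$. Item~6 of \Cref{thm:2K2Char(sk)pol} then forces a subset $I' \subseteq I$ with $|I'| \ge 2$ together with some $u \in C \setminus \{v\}$ such that $I'$ is completely adjacent to $C \setminus \{v, u\}$. Any $z \in I'$ therefore satisfies $|N(z) \cap C| \ge s - 1$.

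The second move is to examine the induced subgraph $H := G[C \cup S \cup \{z\}]$, which inherits the $2K_2$-split partition $(C, S, \{z\})$ with $|C| = s+1$ and a single vertex in its $I$-part. Since $s \le (s+1) - 1$ and $k \ge 1 + 1$, item~4 of \Cref{thm:2K2Char(sk)pol} applies and tells us that $H$ is $(s,k)$-polar if and only if there are at least three vertices of $C$ completely nonadjacent to $z$, equivalently $|N(z) \cap C| \le s - 2$. Our chosen $z$ violates this bound, so $H$ is not an $(s,k)$-polar graph.

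To finish, since $k \ge 2$ I can pick $w \in I \setminus \{z\}$, and then $H$ is an induced subgraph of $G - w$. Because $(s,k)$-polarity is hereditary (induced subgraphs of complete $s$-partite graphs remain complete $s$-partite after deleting some parts, and induced subgraphs of $k$-clusters remain $k$-clusters), $G - w$ also fails to be $(s,k)$-polar, contradicting the minimality of $G$. The main obstacle I anticipate is simply the case-bookkeeping of choosing the right item of \Cref{thm:2K2Char(sk)pol} at each step; the hereditariness invocation and the arithmetic $|N(z) \cap C| \ge s-1 > s-2$ are routine.
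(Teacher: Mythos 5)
Your proof is correct and follows essentially the same route as the paper's: delete a vertex $v \in C$, use the $(s,k)$-polarity of $G-v$ to extract a vertex $z \in I$ with at least $s-1$ neighbours in $C$, and then show that $G[C \cup S \cup \set{z}]$ is a proper induced subgraph that is not $(s,k)$-polar. The only (cosmetic) difference is that you cite items 4 and 6 of \Cref{thm:2K2Char(sk)pol} where the paper argues directly with the polar partition of $G-v$ and invokes the explicit obstructions of \Cref{rem:2K2some(sk)obsmin}.
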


\begin{proof}
	Let $c = |C|$ and $i = |I|$. Assume for a contradiction that $G$ is a minimal
	$(s, k)$-polar obstruction, and for a vertex $v \in C$, let $C' = C \setminus
	\set{v}$. Let $(A, B)$ be an $(s, k)$-polar partition of $G-v$. Since $i = k$
	and $2K_2$ is not a complete multipartite graph, we have that $A \cap I \ne
	\varnothing$, which implies that $B$ is present in both components of $2K_2$,
	and therefore $C' \subseteq A$.

	Let $u \in A \cap I$. Since $C' \subseteq A$ and $|C'| = s$, we have that there
	is a vertex $w \in C'$ such that $C'\setminus\set{w}$ is completely adjacent
	to $u$ and $wu \notin E$. Now, since $i = k \ge 2$, we have that $G[C \cup S
	\cup \set{v}]$ is a proper induced subgraph of $G$ that contains one of the
	minimal $(s, k)$-polar obstruction mentioned in
	\Cref{rem:2K2some(sk)obsmin}, a contradiction.
\end{proof}

\begin{lemma} \label{lem:2K2obsminBound6}%
	Let $s$ and $k$ be integers, $s, k \ge 2$, and let $G = (C, S, I)$ be a
	$2K_2$-split graph. If $|C| = s$ and $|I| \ge 2k-1$, then, $G$ is not a
	minimal $(s, k)$-polar obstruction.
\end{lemma}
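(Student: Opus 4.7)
The plan is to argue by contradiction: suppose $G$ is a minimal $(s,k)$-polar obstruction with $|C| = s$ and $|I| \ge 2k-1$. The whole argument is driven by item~6 of \Cref{thm:2K2Char(sk)pol}, which applies because $c := |C| = s$ and $k \le i := |I|$. Writing $A_u := \set{w \in I : N(w) \cap C = C \setminus \set{u}}$ for each $u \in C$, item~6 reformulates as: $G$ is $(s,k)$-polar if and only if some $A_u$ satisfies $|A_u| \ge i - k + 2$.

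Hence the hypothesis that $G$ is not $(s,k)$-polar gives $|A_u| \le i - k + 1$ for every $u \in C$. Next, for each $v \in I$ the graph $G - v$ is still a strict $2K_2$-split graph with clique part of size $s$ and independent part of size $|I| - 1 \ge k$ (using $i \ge 2k - 1$ and $k \ge 2$), so item~6 applies again; the minimality of $G$ then yields some $u \in C$ with $|A_u \setminus \set{v}| \ge i - k + 1$. Combined with the upper bound above, this forces $|A_u| = i - k + 1$ and $v \notin A_u$. Setting $T := \set{u \in C : |A_u| = i - k + 1}$, we have shown that for every $v \in I$ some $u \in T$ avoids $v$.

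A case analysis on $|T|$ then yields the contradiction. For $|T| = 0$ the previous conclusion fails. For $|T| = 1$, say $T = \set{u_0}$, every $v \in I$ must lie outside $A_{u_0}$, so $A_{u_0} = \varnothing$; but $|A_{u_0}| = i - k + 1 \ge k \ge 2$, absurd. For $|T| \ge 2$, the pairwise disjointness of the $A_u$'s inside $I$ gives $2(i - k + 1) \le \sum_{u \in T}|A_u| \le i$, whence $i \le 2k - 2$, contradicting $i \ge 2k - 1$. The only delicate point (and the main potential pitfall) is checking that item~6 of \Cref{thm:2K2Char(sk)pol} really applies both to $G$ and to each $G - v$ with $v \in I$, rather than item~1 or item~7; this is where the combination of $i \ge 2k - 1$ and $k \ge 2$ is used, and it is tightest in the edge case $k = 2$, $i = 3$, where $i - 1 = k$ barely keeps $G - v$ inside the regime $k \le |I \setminus \set{v}|$.
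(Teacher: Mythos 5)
Your proof is correct and rests on the same engine as the paper's: item~6 of \Cref{thm:2K2Char(sk)pol} applied to $G$ and to the subgraphs $G-v$ for $v\in I$, combined with the counting inequality $2(i-k+1)>i$ coming from $i\ge 2k-1$. The only difference is organizational --- you package the witnesses into the canonical disjoint sets $A_u$ indexed by $C$ and split on $|T|$, whereas the paper picks two vertices $u\in I$ and $x\in I'_u$ and shows their witness sets must intersect, forcing a common $v_u=v_x$ --- but the underlying argument is the same.
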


\begin{proof}
	Let $c = |C|$ and $i = |I|$. Assume for a contradiction that $G$ is a minimal
	$(s, k)$-polar obstruction, and let $u \in I$. Then, $G-u$ is an $(s,
	k)$-polar graph and, by item 6 of \Cref{thm:2K2Char(sk)pol}, there is a subset
	$I'_u$ of $I \setminus \set{u}$ with at least $i-k+1$ vertices, and a vertex
	$v_u \in C$ such that, $I'_u$ is completely adjacent to $C \setminus
	\set{v_u}$ and $v_u$ is completely nonadjacent to $I'_u$. Now, let $x \in
	I'_u$. By the same argument of the paragraph above, there is a subset $I'_x$
	of $I \setminus \set{x}$ with at least $i-k+1$ vertices, and a vertex $v_x \in
	C$ such that, $I'_x$ is completely adjacent to $C \setminus \set{v_x}$ and
	$v_x$ is completely nonadjacent to $I'_x$.

	Observe that $2i-2k+2 \ge i+1$, because $i \ge 2k-1$. Thus, we have that
	$I'_x \cap I'_u \ne \varnothing$, otherwise
	\[ i = |I| \ge |I'_x \cup I'_u| \ge 2(i-k+1) \ge i+1, \]
	which is absurd. Since $x \in I'_u \setminus I'_x$ and $I'_x \cap I'_u \ne
	\varnothing$, we have that $|I'_x \cup I'_u| \ge |I'_x| + 1 \ge i-k+2$, $v_u
	= v_x$, $I'_x\cup I'_u$ is completely adjacent to $C \setminus \set{v_u}$, and
	$v_u$ is completely nonadjacent to $I'_x\cup I'_u$. This is impossible,
	since item 6 of \Cref{thm:2K2Char(sk)pol} implies that in such a case $G$ is
	an $(s, k)$-polar graph, contradicting our initial assumption.
\end{proof}

\begin{lemma} \label{lem:2K2obsminBound5}%
	Let $s$ and $k$ be integers, $s, k \ge 2$, and let $G = (C, S, I)$ be a
	$2K_2$-split graph. If $|C| \le s-1$ and $|I| \ge 2k-1$, then $G$ is not a
	minimal $(s, k)$-polar obstruction.
\end{lemma}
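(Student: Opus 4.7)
The plan is to argue by contradiction, closely following the pattern of \Cref{lem:2K2obsminBound6}, but replacing item 6 of \Cref{thm:2K2Char(sk)pol} with item 5, which is the item that applies under our hypotheses. Write $c = |C|$ and $i = |I|$. Since $c \le s-1$ we have $s \ge c+1$, and since $i \ge 2k-1 \ge k$ (using $k \ge 2$) we have $k \le i$, so item 5 characterizes $(s,k)$-polarity for $G$, and for each $u \in I$ it also characterizes $(s,k)$-polarity for $G-u$ (whose $2K_2$-split partition is $(C, S, I\setminus\set{u})$, with $|I \setminus \set{u}| = i-1 \ge 2k-2 \ge k$).

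Assume $G$ is a minimal $(s,k)$-polar obstruction. Pick any $u \in I$; by minimality $G-u$ is $(s,k)$-polar, so by item 5 of \Cref{thm:2K2Char(sk)pol} there exists $I'_u \subseteq I \setminus \set{u}$ with $|I'_u| \ge (i-1)-k+2 = i-k+1$ satisfying either (a) $I'_u$ is completely adjacent to $C$, or (b) there exists $v_u \in C$ such that $I'_u$ is completely adjacent to $C \setminus \set{v_u\}$ and $v_u$ is completely nonadjacent to $I'_u$. Now pick $x \in I'_u$ and repeat the argument on $G-x$ to obtain $I'_x \subseteq I \setminus \set{x}$ with $|I'_x| \ge i-k+1$ satisfying the analogous condition, possibly with a distinguished vertex $v_x$.

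The counting step uses our hypothesis: $|I'_u| + |I'_x| \ge 2(i-k+1) \ge i+1$ precisely because $i \ge 2k-1$, so $I'_u \cap I'_x \ne \varnothing$; combined with $x \in I'_u \setminus I'_x$ this gives $|I'_u \cup I'_x| \ge |I'_x|+1 \ge i-k+2$. A short case analysis on which of (a), (b) each of $I'_u, I'_x$ satisfies then finishes the argument. If both satisfy (a), then $I'_u \cup I'_x$ is completely adjacent to $C$, witnessing (a) of item 5 for $G$. If both satisfy (b), either $v_u = v_x$, in which case the union witnesses (b) for $G$, or $v_u \ne v_x$, in which case any $w \in I'_u \cap I'_x$ must be nonadjacent to $v_u$ (since $w \in I'_u$) yet adjacent to $v_u$ (since $w \in I'_x$ and $v_u \ne v_x$ lies in $C \setminus \set{v_x}$), a contradiction. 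The mixed cases (one satisfies (a), the other (b)) yield the same type of direct contradiction on any common vertex of $I'_u \cap I'_x$. In the non-contradictory cases, item 5 then certifies that $G$ itself is $(s,k)$-polar, contradicting the assumption that $G$ is an $(s,k)$-polar obstruction.

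The only real obstacle is verifying the case analysis cleanly, in particular checking that a common element of $I'_u \cap I'_x$ always either forces the desired witness or an outright adjacency/nonadjacency conflict; this is straightforward since properties (a) and (b) of item 5 are mutually incompatible when one of them is witnessed by the same vertex that violates the other. Everything else is a routine translation of the proof of \Cref{lem:2K2obsminBound6} from item 6 to item 5 of \Cref{thm:2K2Char(sk)pol}.
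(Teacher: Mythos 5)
Your proof is correct and follows essentially the same route as the paper's: both apply item 5 of \Cref{thm:2K2Char(sk)pol} to $G-u$ and to $G-x$ for some $x \in I'_u$, use $|I| \ge 2k-1$ to force $I'_u \cap I'_x \ne \varnothing$, and combine the two witness sets into a single witness of size at least $i-k+2$ certifying that $G$ is $(s,k)$-polar. The paper merely organizes the case analysis a little differently (it first shows the ``completely nonadjacent'' alternative can never occur, so every $I'_u$ is completely adjacent to $C$, and then takes unions), but the substance is identical.
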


\begin{proof}
  Let $c = |C|$ and $i = |I|$. Assume for a contradiction that $G$ is a minimal
	$(s, k)$-polar obstruction, and let $u \in I$. Then $G-u$ is an $(s, k)$-polar
	graph and, by item 5 of \Cref{thm:2K2Char(sk)pol}, there exists a subset
	$I'_u$ of $I \setminus \set{u}$ with at least $i-k+1$ vertices and a vertex
	$v_u \in C$ such that $I'_u$ is completely adjacent to $C \setminus \set{v_u}$
	and, $v_u$ is either completely adjacent or completely nonadjacent to $I'_u$.

	We claim that $v_u$ is completely adjacent to $I'_u$, and we prove it by means
	of contradiction. Let $x \in I'_u$. Then $G-x$ is an $(s, k)$-polar graph and
	again, we have from item 5 of \Cref{thm:2K2Char(sk)pol} that there exists a
	subset $I'_x$ of $I \setminus \set{x}$ with at least $i-k+1$ vertices and a
	vertex $v_x \in C$ such that $I'_x$ is completely adjacent to $C \setminus
	\set{v_x}$ and, $v_x$ is either completely adjacent or completely nonadjacent
	to $I'_x$.
	
	Observe that, as it occurred in \Cref{lem:2K2obsminBound6}, since $i \ge
	2k-1$, there is a vertex $w \in I'_x \cap I'_u$. Since we are assuming $v_u$
	is completely nonadjacent to $I'_u$, we have that $v_u$ is not adjacent to
	$w$, and due to $w \in I'_x$, we have that $v_u = v_x$. But then, $I'_u \cup
	I'_x$ is completely adjacent to $C \setminus v_u$ and $v_u$ is completely
	nonadjacent to $I'_u \cup I'_x$. Moreover, since $x \notin I'_x$, the set
	$I'_u \cup I'_x$ has at least $i-k+2$ vertices. But then, item 5 of
	\Cref{thm:2K2Char(sk)pol} implies that $G$ is an $(s, k)$-polar graph, a
	contradiction. The contradiction arose from assuming that $v_u$ is completely
	nonadjacent to $I'_u$, so it must be the case that, for every vertex $u \in
	I$, there exists a subset $I'_u$ of $I$ with at least $i-k+1$ vertices such
	that $I'_u$ is completely adjacent to $C$.

	But then, for any $x \in I'_u$ and any subset $I'_x$ of $I \setminus \set{x}$
	with $i-k+1$ vertices such that $I'_x$ is completely adjacent to $C$, we
	have that $I'_x \cup I'_u$ is a subset of $I$ with at least $i-k+2$ vertices
	that is completely adjacent to $C$, which implies by
	\Cref{thm:2K2Char(sk)pol} that $G$ is an $(s, k)$-polar graph, contradicting
	our initial assumption.
\end{proof}

Now, we are ready to give an upper bound for the order of the $2K_2$-split
minimal $(s, k)$-polar obstructions.

\begin{theorem} \label{thm:2K2boundSk}%
	Let $s$ and $k$ be integers, $s,k \ge 2$. Every $2K_2$-split minimal $(s,
	k)$-polar obstruction has order at most $s+ 2k + 2$.
\end{theorem}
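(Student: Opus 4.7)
The plan is to derive the bound by a case analysis on $c = |C|$ relative to $s$, using the five preceding lemmas to bound $i = |I|$ in each case; since $|S| = 4$ for any strict $2K_2$-split graph, the order of $G$ is $c + i + 4$, so it suffices to show $c + i \le s + 2k - 2$ in the worst case. Before case-splitting, observe that a $2K_2$-split minimal $(s,k)$-polar obstruction $G = (C, S, I)$ must be strict and have $C, I \ne \varnothing$: otherwise $G$ admits a trivial $(2,2)$-polar partition (hence an $(s,k)$-polar partition since $s,k \ge 2$).

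Case 1: $c \ge s+2$. By item 1 of \Cref{lem:2K2obsminBound3}, $i \ge k$ forces $G$ non-minimal; by \Cref{lem:2K2obsminBound4}, $i \le k-1$ also forces $G$ non-minimal. So this case cannot occur.

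Case 2: $c = s+1$. By item 2 of \Cref{lem:2K2obsminBound3} we get $i \le k$, and then \Cref{lem:Improve2K2obsminBound4} rules out $i = k$. Hence $i \le k-1$, giving $c + i + 4 \le s + k + 4 \le s + 2k + 2$ (since $k \ge 2$).

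Case 3: $c = s$. \Cref{lem:2K2obsminBound6} gives $i \le 2k - 2$, so $c + i + 4 \le s + 2k + 2$, and this is where the claimed bound is attained.

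Case 4: $c \le s-1$. \Cref{lem:2K2obsminBound5} gives $i \le 2k - 2$, so $c + i + 4 \le s + 2k + 1$.

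Taking the maximum across the cases yields the desired bound $s + 2k + 2$. The argument is essentially bookkeeping — all the real work has already been localized in Lemmas \ref{lem:2K2obsminBound3}--\ref{lem:2K2obsminBound5}; the only thing to verify carefully is that the cases are exhaustive and that the case $c = s$ is indeed the tight one, which it is because only there can $i$ grow as large as $2k-2$ while $c$ simultaneously equals $s$.
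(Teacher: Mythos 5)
Your proof is correct and takes essentially the same route as the paper's: both reduce everything to the five preceding lemmas, showing that $|C|\ge s+1$ forces order at most $s+k+4$ while $|C|\le s$ forces order at most $s+2k+2$, and then compare the two bounds using $k\ge 2$. Your version merely makes the subcases ($c\ge s+2$ impossible, $c=s+1$ gives $i\le k-1$, etc.) explicit where the paper compresses them into a single sentence.
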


\begin{proof}
	Let $G = (C, S, I)$ be a $2K_2$-split minimal $(s, k)$-polar obstruction. It
	follows from
	\Cref{lem:2K2obsminBound3,lem:2K2obsminBound4,lem:Improve2K2obsminBound4}
	that, if $|C| \ge s+1$, then $|V_G| \le s + k + 4$. Additionally, we conclude
	from \Cref{lem:2K2obsminBound6,lem:2K2obsminBound5} that $|V_G| \le s + 2k +
	2$ whenever $|C| \le s$. Hence, we have that $|V_G| \le \max \set{s + k + 4, s
	+ 2k + 2}$. However, since $k \ge 2$, we have that $s + 2k + 2 \ge s + k + 4$,
	so the result follows.
\end{proof}

We continue with a characterization for $2K_2$-split $(s, \infty)$-polar graphs,
and then with an upper bound for the order of $2K_2$-split minimal $(s,
\infty)$-minimal obstructions.

\begin{lemma} \label{lem:2K2char(sInfty)polar}%
	Let $s$ be an integer, $s \ge 2$, and let $G = (C, S, I)$ be an strict
	$2K_2$-split graph. Then, $G$ is an $(s, \infty)$-polar graph if and only if
	either $s \ge |C|$ or there is a subset $C'$ of $C$ with at least $|C|-s+2$
	vertices that is completely nonadjacent to $I$.
\end{lemma}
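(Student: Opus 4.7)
The plan is to adapt the case-analysis technique used in \Cref{thm:2K2Char(sk)pol}. Throughout, write $S = \{u, v, x, y\}$ with $uv, xy \in E_G$ and set $c = |C|$, $i = |I|$. Recall that in any $(s, \infty)$-polar partition $(A, B)$ of $G$, the graph $G[A]$ is complete multipartite (so contains no induced $2K_2$ or $K_2 + K_1$) and $G[B]$ is $P_3$-free.

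For sufficiency, I will exhibit polar partitions directly. If $s \ge c$, then $(C,\, S \cup I)$ works, since $G[C] = K_c$ is complete $s$-partite and $G[S \cup I] = 2K_2 + \overline{K_i}$ is a cluster. If instead there is $C' \subseteq C$ with $|C'| \ge c - s + 2$ completely nonadjacent to $I$, then $(\{u,v\} \cup (C \setminus C'), \{x,y\} \cup C' \cup I)$ is an $(s, \infty)$-polar partition: the first side is a clique of size at most $s$, while the components of the second side are the clique $\{x,y\} \cup C'$ together with the isolated vertices of $I$.

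For necessity, I will assume $s < c$ and pick an $(s, \infty)$-polar partition $(A, B)$. Enumerating the subsets of $S$ whose induced subgraph is complete multipartite leaves eleven possibilities for $A \cap S$: the empty set, the four singletons, the two edges $\{u, v\}$ and $\{x, y\}$, and the four non-edges. In every case other than $A \cap S \in \{\{u,v\}, \{x,y\}\}$, the complement $B \cap S$ contains two non-adjacent vertices of $S$, so any $w \in C \cap B$ (which is completely adjacent to $S$) would induce a $P_3$ in $G[B]$; hence $C \subseteq A$. But then $G[A]$ contains $K_c$, $K_{c+1}$, or $K_c \oplus \overline{K_2}$, each forcing $s \ge c$ and contradicting $s < c$. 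So $A \cap S$ must be one of the edges, say $\{u, v\}$.

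In this surviving case, any $w \in I \cap A$ would be non-adjacent to both $u, v \in A$, impossible in a complete multipartite graph, so $I \subseteq B$; and an edge from a vertex of $C_B := C \cap B$ to a vertex of $I$ would produce an induced $P_3$ in $G[B]$ through $x$, so $C_B$ is completely nonadjacent to $I$. Finally, $G[A] = G[\{u,v\} \cup (C \cap A)]$ is a clique on $|C \cap A| + 2$ vertices, forcing $s \ge |C \cap A| + 2$, i.e.\ $|C_B| \ge c - s + 2$, and $C' := C_B$ is the required subset. The main obstacle is the bookkeeping in the eleven-case enumeration, but each sub-case resolves in one or two lines exactly as in the proof of \Cref{thm:2K2Char(sk)pol}.
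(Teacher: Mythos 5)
Your proposal is correct and follows essentially the same route as the paper's proof: both directions hinge on the same two explicit partitions for sufficiency, and on forcing one edge of $G[S]$ into $A$ and the other into $B$ for necessity, after which $I \subseteq B$, the nonadjacency of $C \cap B$ to $I$, and the bound $|C \cap B| \ge |C| - s + 2$ follow exactly as in the paper. The only difference is cosmetic: your eleven-case enumeration of $A \cap S$ can be shortened by noting that $C \cap B \ne \varnothing$ (from $|C| > s$) forces $B \cap S$ to be a clique, hence an edge of $G[S]$.
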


\begin{proof}
	Let $c = |C|$ and $i = |I|$. Suppose that $G$ has an $(s, \infty)$-polar
	partition $(A, B)$. If $c > s$, since $G[A]$ is $K_{s+1}$-free, then $C \not
	\subseteq A$, so $C' = C \cap B \ne \varnothing$. Moreover, $G[B]$ is
	$P_3$-free and $A$ induces a $\overline{P_3}$-free graph, which implies,
	without loss of generality, that $\set{u,v} \in A$ and $\set{x,y} \in B$.
	Thus, $I \cap A = \varnothing$ because $G[A]$ is $\overline{P_3}$-free, so $I
	\subseteq B$, and $|C'| \ge c-s+2$ because $A$ induces a $K_{s+1}$-free graph.
	Additionally, since $C' \cup I \subseteq B$, we have that $C'$ is completely
	nonadjacent to $I$, and we are done. 

	For the converse implication, if $s \le c$, then $(C, S \cup I)$ is an $(s,
	i+2)$-polar partition of $G$. Otherwise, there is a set $C'$ of $C$ with at
	least $c-s+2$ vertices that is completely nonadjacent to $I$. In this case,
	$(\set{u,v} \cup C \setminus C', \set{x,y} \cup C' \cup I)$ is an $(s,
	i+1)$-polar partition of $G$, and the result follows.
\end{proof}

For each integer $s \ge 2$, let $H_s = (C, S, I)$ be the strict $2K_2$-split
graph such that $|C| = s+1$, $|I| = s-1$, and for an injection $f \colon I \to
C$, a vertex $v \in I$ is adjacent to a vertex $u \in C$ if and only if $f(v) =
u$. Notice that, by \Cref{lem:2K2char(sInfty)polar}, $H_s$ is a minimal $(s,
\infty)$-polar obstruction.

\begin{theorem} \label{theo:uBound2K2sSinfty}%
  Let $s$ be an integer, $s \ge 2$. Every $2K_2$-split minimal $(s,
	\infty)$-polar obstruction has order at most $2s + 4$, and the bound is tight.
	
	Consequently, there is only a finite number of $2K_2$-split minimal $(s,
	\infty)$-polar obstructions.
\end{theorem}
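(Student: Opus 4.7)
The plan is to use \Cref{lem:2K2char(sInfty)polar} to pin down the structure of a minimal $(s,\infty)$-polar obstruction $G = (C,S,I)$, obtain separate upper bounds for $|C|$ and $|I|$, and then add them up together with $|S| = 4$. We may assume $G$ is strict, since non-strict $2K_2$-split graphs are split, hence $(1,1)$-polar. \Cref{lem:2K2char(sInfty)polar} then forces $|C| \geq s+1$ (otherwise $G$ would already be $(s,\infty)$-polar), and at the same time bounds by $|C|-s+1$ the cardinality of any subset of $C$ completely nonadjacent to $I$.

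First I would pin down $|C| = s+1$ by adapting the proof of \Cref{lem:2K2obsminBound4}. Suppose for contradiction $|C| \geq s+2$. For each $v \in C$, the minimality of $G$ together with \Cref{lem:2K2char(sInfty)polar} applied to $G-v$ provides a subset of $C\setminus\{v\}$ of size $|C|-s+1$ completely nonadjacent to $I$; consequently every vertex of $C$ must have a neighbor in $I$, for otherwise we could adjoin such a $v$ and exhibit a subset of $C$ of size $|C|-s+2$ completely nonadjacent to $I$, contradicting non-$(s,\infty)$-polarity of $G$. Deleting any $|C|-s-1 \geq 1$ vertices of $C$ then yields a proper induced $2K_2$-split subgraph with clique part of size exactly $s+1$, every vertex of which still has a neighbor in $I$; so the only subset completely nonadjacent to $I$ is the empty one, of size $0 < 3$, violating \Cref{lem:2K2char(sInfty)polar} and contradicting the minimality of $G$.

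Next I would bound $|I| \leq s-1$ by a counting argument. Let $N$ denote the subset of $C$ consisting of all vertices completely nonadjacent to $I$; since $G$ is not $(s,\infty)$-polar and $|C|-s+2 = 3$, we have $|N| \leq 2$. For each $v \in I$, $G-v$ is $(s,\infty)$-polar, so \Cref{lem:2K2char(sInfty)polar} furnishes a subset $C'_v \subseteq C$ of size at least $3$ completely nonadjacent to $I \setminus \{v\}$. At most $|N|$ of the vertices of $C'_v$ lie in $N$, so at least $3 - |N|$ of them belong to $C \setminus N$ and necessarily have $v$ as their unique neighbor in $I$. These ``private'' sets are pairwise disjoint for distinct $v \in I$ and contained in $C \setminus N$, so $|I|(3-|N|) \leq s+1-|N|$. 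A case analysis on $|N| \in \{0,1,2\}$ (using $s \geq 2$) gives $|I| \leq s-1$ uniformly.

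Combining, $|V_G| = |C| + |S| + |I| \leq (s+1) + 4 + (s-1) = 2s+4$. Tightness follows from the observation immediately preceding the theorem that $H_s$ is a minimal $(s,\infty)$-polar obstruction of order $2s+4$, and the finiteness statement is then automatic. The main obstacle is the counting step for $|I|$: one must balance the two ``free'' slots contributed by $N$ against the ``privately served'' vertices of $C \setminus N$, so that the inequality $|I|(3-|N|) \leq s+1-|N|$ closes up to the same bound $s-1$ across all three admissible values of $|N|$.
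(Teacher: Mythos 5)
Your proposal is correct and follows essentially the same route as the paper: first force $|C|=s+1$ (the paper cites \Cref{lem:2K2obsminBound3,lem:2K2obsminBound4} via the reduction to minimal $(s,k)$-polar obstructions for large $k$, while you re-derive the same fact directly from \Cref{lem:2K2char(sInfty)polar}), and then bound $|I|\le s-1$ by assigning to each $v\in I$ ``private'' vertices of $C$ witnessing the $(s,\infty)$-polarity of $G-v$. The paper packages that count as $|\bigcup_{u\in I}C'_u|\ge |I|+2$ rather than your inequality $|I|(3-|N|)\le s+1-|N|$ with a case analysis on $|N|$, but the underlying argument and the resulting bound $2s+4$, together with the tightness witness $H_s$, are the same.
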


\begin{proof}
	Let $G = (C, S, I)$ be a $2K_2$-split minimal $(s, \infty)$-polar obstruction,
	and let $c$ and $i$ be the number of vertices in $C$ and $I$, respectively.
	From \Cref{lem:2K2char(sInfty)polar}, we have that $c > s$. In addition, since
	$G$ is a minimal $(s, k)$-polar obstruction for some positive integer $k$, we
	have from \Cref{lem:2K2obsminBound3,lem:2K2obsminBound4}, that $c \le s+1$, so
	we conclude that $c = s+1$.

	By the minimality of $G$, we have from \Cref{lem:2K2char(sInfty)polar} that,
	for each $u \in I$, there is a subset $C'_u$ of $C$, with at least three
	vertices, that is completely nonadjacent to $I \setminus \set{u}$.
	Additionally, since $G$ does not admit an $(s, k)$-polar partition,
	\Cref{lem:2K2char(sInfty)polar} implies that at most two vertices of $C$ are
	completely nonadjacent to $I$, so each vertex $u \in I$ is adjacent to at
	least one vertex of $C'_u$. Moreover, it follows from the previous
	observations that, for each $u \in I$, there is at least one vertex in
	$C'_u$ that is not in $C'_v$ for any $v \in I \setminus \set{u}$. Therefore,
	$|\bigcup_{u \in I} C'_u| \ge i+2$, so $c \ge i +2$, and it follows that
	$|V_G| = |C| + |S| + |I| \le 2s + 4$.

	The bound is tight since $H_s$ is a $K_2$-split minimal $(s, \infty)$-polar
	obstruction of order $2s+4$.
\end{proof}

Unlike pseudo-split graphs, $2K_2$-split graphs does not constitute a
self-complementary class of graphs, so we cannot use simple arguments of
complements to conclude results for $(\infty, k)$-polarity from those of $(s,
\infty)$-polarity on this class. Next, we provide an upper bound for the order
of $2K_2$-split minimal $(\infty, k)$-minimal obstructions by proving similar
results to \Cref{lem:2K2char(sInfty)polar,theo:uBound2K2sSinfty} for $(\infty,
k)$-polarity on $2K_2$-split graphs.

\begin{lemma} \label{lem:2K2char(inftyK)polar}%
	Let $k$ be an integer, $k \ge 2$, and let $G = (C, S, I)$ be an strict
	$2K_2$-split graph. Then, $G$ is an $(\infty, k)$-polar graph if and only if
	either $|I| \le k-1$ or there exists a subset $I'$ of $I$ with at least
	$|I|-k+2$ vertices such that $G[C \cup I']$ is a complete multipartite graph.
\end{lemma}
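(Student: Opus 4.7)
The plan is to prove both directions by elementary partition arguments. Write $S = \{u, v, x, y\}$ so that the two edges of $G[S] \cong 2K_2$ are $uv$ and $xy$. For sufficiency I would exhibit explicit partitions. When $|I| \le k-1$, take $A = C \cup \{u, v\}$ and $B = \{x, y\} \cup I$: then $G[A]$ is a clique (because $C$ is a clique completely adjacent to $\{u, v\}$ and $uv \in E_G$), while $G[B] \cong K_2 + |I|K_1$ is a cluster with $1 + |I| \le k$ components. When an $I' \subseteq I$ with $|I'| \ge |I| - k + 2$ and $G[C \cup I']$ complete multipartite is available, take $A = C \cup I'$ and $B = S \cup (I \setminus I')$: $G[A]$ is complete multipartite by hypothesis, and $G[B] \cong 2K_2 + (|I| - |I'|)K_1$ has $2 + |I| - |I'| \le k$ components.

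For necessity I would assume $(A, B)$ is an $(\infty, k)$-polar partition of $G$ and focus on the nontrivial case $|I| \ge k$, aiming to exhibit the required $I'$. Since $G[S] \cong 2K_2$ is not complete multipartite, $B \cap S \ne \varnothing$. The key step is a case analysis on $A \cap S$ using the fact that in any complete multipartite graph non-adjacency is an equivalence relation. This rules out $|A \cap S| = 3$ immediately, and it also shows that if one full component of $G[S]$ (say $\{u, v\}$) lies in $A$, then $I \cap A = \varnothing$; to avoid an induced $P_3$ through $c$, a vertex of $B \cap \{x, y\}$, and a vertex of $I$ in $G[B]$, every $c \in C \cap B$ is forced to be non-adjacent to $I$, and a clique count in $G[B]$ yields $|I| \le k - 1$, contradicting our assumption. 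Thus $A \cap S$ is empty, a single vertex of $S$, or a pair of non-adjacent vertices from different components of $G[S]$.

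In each of these three remaining subcases, $B \cap S$ contains two non-adjacent vertices $s_1, s_2$ from different components of $G[S]$; any $c \in C \cap B$ together with $s_1, s_2$ would form an induced $P_3$ in $G[B]$, so $C \subseteq A$. Setting $I' := I \cap A$, the inclusion $C \cup I' \subseteq A$ and the heredity of the complete multipartite property under induced subgraphs immediately give that $G[C \cup I']$ is complete multipartite. For the size bound, in every subcase $G[B]$ is the disjoint union of $G[B \cap S]$ with the $|I \cap B|$ isolated vertices of $I \cap B$, and a direct inspection shows that $G[B \cap S]$ is a disjoint union of exactly two cliques; thus $2 + |I \cap B| \le k$, and $|I'| \ge |I| - k + 2$, as required. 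The main obstacle is the opening case analysis on $A \cap S$: once the troublesome configurations (a full component of $G[S]$ lying in $A$, and $|A \cap S| = 3$) are dispatched, the remaining subcases assemble uniformly from the $P_3$-freeness of $G[B]$ and the heredity of the complete multipartite property.
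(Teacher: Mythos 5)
Your proof is correct and follows essentially the same route as the paper's: identical explicit partitions for sufficiency, and for necessity the same chain of deductions ($B$ must meet both components of $G[S]$, hence $C\subseteq A$ by $P_3$-freeness of $G[B]$, then $I'=I\cap A$ with the component count giving $|I'|\ge |I|-k+2$). The only cosmetic difference is that you dispatch the ``one full component of $G[S]$ in $A$'' configuration by a direct case analysis and component count, whereas the paper first extracts a vertex of $I\cap A$ from the $(k+1)K_1$-freeness of $G[B]$ and uses it to show $A\cap S$ is independent.
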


\begin{proof}
	Let $c = |C|$ and $i = |I|$. Suppose that $G$ has an $(\infty, k)$-polar
	partition $(A, B)$. Since $G[S ]$ is not a complete multipartite graph, we
	have that $S \not\subseteq A$, so $S \cap B \ne \varnothing$. From here, if $i
	\ge k$, then $I \not\subseteq A$ because $G[B]$ is $(k+1)K_1$-free, so $I' = I
	\cap A \ne \varnothing$. Hence, since $G[A]$ is a $\overline{P_3}$-free graph,
	we have that $A \cap S$ is an independent set, so $B$ intersects the vertex
	sets of both of the connected components of $G[S]$.  But then, $C \cap B =
	\varnothing$, because $B$ induces a $P_3$-free graph. Therefore $C \cup I'
	\subseteq A$, and $C \cup I'$ induces a complete multipartite graph. Notice
	that, due to $G[B]$ is $(k+1)K_1$-free and $B$ intersects the vertex sets of
	both components of $G[S]$, $|I'| \ge i-k+2$.

	For the converse implication, let $S'$ be a maximum clique of $G[S]$. If $i
	\le k-1$, then $(C \cup S', I \cup S \setminus S')$ is an $(\infty,
	k)$-polar partition of $G$. Otherwise, there is a subset $I'$ of $I$ with at
	least $i-k+2$ vertices such that $G[C \cup I']$ is a complete multipartite
	graph, so in this case $(C \cup I', S \cup I \setminus I')$ is an $(\infty,
	k)$-polar partition of $G$.
\end{proof}

\begin{theorem}  \label{theo:uBound2K2sInftyK}
	Let $k$ be an integer, $k \ge 2$. Any $2K_2$-split minimal $(\infty,
	k)$-polar obstruction has order at most $2 + 2k + 2^{2k-1}$. In consequence,
	there is only a finite number of $2K_2$-split minimal $(\infty, k)$-polar
	obstructions.
\end{theorem}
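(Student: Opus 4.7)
The plan is to bound $|I|$ and $|C|$ separately and combine with $|S|=4$. For each $i \in I$, let $L(i) = C \setminus N(i)$ and, for $\ell \subseteq C$, let $I^\ell = \set{i \in I : L(i) = \ell}$ with $n_\ell = |I^\ell|$. The proof of \Cref{lem:2K2char(inftyK)polar} actually shows that a subset $I' \subseteq I$ makes $G[C \cup I']$ complete multipartite if and only if $I' \subseteq I^\emptyset$ or $I' \subseteq I^{\set{v}}$ for some $v \in C$; in particular, $G$ is not $(\infty, k)$-polar if and only if $|I| \ge k$ and $n_\ell \le |I| - k + 1$ for every $\ell \in \set{\emptyset} \cup \set{\set{v} : v \in C}$.

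First I would prove $|I| \le 2k - 2$. The case $|I| = k$ already satisfies this for $k \ge 2$, so assume $|I| \ge k + 1$ and let $M = \set{\ell : n_\ell = |I| - k + 1}$. For each $u \in I$, minimality says $G - u$ is $(\infty, k)$-polar, which by the rephrased lemma applied to $G - u$ produces some $\ell \in \set{\emptyset} \cup \set{\set{v} : v \in C}$ with $n_\ell = |I| - k + 1$ and $u \notin I^\ell$. A short case analysis on where $u$ lies yields: if $u \in I^\emptyset$ then some $\set{v} \in M$; if $u \in I^{\set{v_0}}$ then either $\emptyset \in M$ or some $\set{v} \in M$ with $v \ne v_0$; and if $|L(u)| \ge 2$ then $M \ne \emptyset$. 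These conditions rule out $|M| \le 1$: $M = \set{\emptyset}$ is refuted by any $u \in I^\emptyset$ (nonempty because $n_\emptyset \ge 2$), and $M = \set{\set{v_0}}$ by any $u \in I^{\set{v_0}}$. Since distinct $I^\ell$ are disjoint subsets of $I$, $|M| \ge 2$ forces $2(|I| - k + 1) \le |I|$, hence $|I| \le 2k - 2$.

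Next I would bound $|C|$ by showing that no three vertices of $C$ share the same non-neighborhood $\tau(v) = I \setminus N(v)$. If $v_1, v_2, v_3 \in C$ had the same $\tau$, they would be true twins in $G$, so by minimality $G - v_1$ admits an $(\infty, k)$-polar partition $(A, B)$; splitting into cases on the placement of $v_2, v_3$, each case produces an $(\infty, k)$-polar partition of $G$, contradicting the assumption. If $v_2, v_3 \in B$, they sit in a common clique component $K$ of $B$, and since $v_1$ is completely adjacent to $K$, $(A, B \cup \set{v_1})$ works. If $v_2 \in A$ and $v_3 \in B$ (or symmetrically), an identical computation shows $N(v_1) \cap B = \set{v_3} \cup (N(v_3) \cap B)$ is exactly $v_3$'s clique component, so $v_1$ can be adjoined to that component of $B$. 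Finally, if $v_2, v_3 \in A$, the edge $v_2 v_3$ places them in distinct parts of the complete multipartite graph $G[A]$; any vertex in $A \cap \tau$ would have to sit in each of those two parts, forcing $A \cap \tau = \emptyset$, after which $v_1$ is adjacent to every vertex of $A$ and $(A \cup \set{v_1}, B)$ is an $(\infty, k)$-polar partition of $G$ with $v_1$ as a new singleton part.

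Each $\tau$ thus appears at most twice among vertices of $C$, so $|C| \le 2 \cdot 2^{|I|} \le 2^{2k-1}$, and consequently $|V_G| = |C| + |S| + |I| \le 2^{2k-1} + 4 + (2k - 2) = 2 + 2k + 2^{2k-1}$; finiteness of minimal obstructions follows immediately. The main obstacle is the twin argument for $|C|$, specifically the sub-case $v_2, v_3 \in A$: the key observation is that the disjointness of parts in a complete multipartite graph, together with the edge $v_2 v_3$, compels $A \cap \tau = \emptyset$, which is exactly what makes the singleton-part extension valid.
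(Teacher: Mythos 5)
Your proof is correct and arrives at exactly the paper's bound, and while it shares the paper's overall skeleton (first $|I|\le 2k-2$, then an at-most-two-to-one map from $C$ into $2^{I}$ giving $|C|\le 2^{|I|+1}\le 2^{2k-1}$), both key steps are executed by genuinely different arguments. For $|I|\le 2k-2$, the paper passes to a minimal $(s,k)$-polar obstruction for a suitable finite $s$ and cites \Cref{lem:2K2obsminBound6,lem:2K2obsminBound5}; your argument with the classes $I^{\ell}$ and the set $M$ of maximizers is a self-contained version of the same two-disjoint-large-subsets count, applied directly in the $(\infty,k)$ setting, which has the advantage of not relying on the (unstated in the paper) reduction from minimal $(\infty,k)$-obstructions to minimal $(s,k)$-obstructions. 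For the bound on $|C|$, the paper assigns to each $x\in C$ a witness set $I'_x$ with $|I'_x|\ge |I|-k+2$ extracted from an $(\infty,k)$-polar partition of $G-x$, and rules out three equal witnesses via the claim that $I'_v\subseteq I'_u$ forces every vertex of $I'_v$ to have neighbourhood exactly $C\setminus\set{u,v}$; you instead show directly that $v\mapsto I\setminus N(v)$ is at most two-to-one on $C$ by a true-twins argument: three pairwise adjacent vertices of $C$ with a common non-neighbourhood $\tau$ in $I$ have the same closed neighbourhood, and an $(\infty,k)$-polar partition $(A,B)$ of $G-v_1$ always absorbs $v_1$ --- into the clique component of whichever twin lies in $B$, or, when $v_2,v_3\in A$, as a new singleton part of $A$, the edge $v_2v_3$ forcing $A\cap\tau=\varnothing$. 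Your twin argument is arguably cleaner, is independent of \Cref{lem:2K2char(inftyK)polar}, and isolates a reusable principle (a minimal obstruction to a property of this kind cannot contain three pairwise adjacent true twins); the paper's witness-set argument is more tightly coupled to the characterization lemma, which is what the authors later exploit when discussing possible improvements of the bound. All the individual verifications in your write-up (the reformulation of \Cref{lem:2K2char(inftyK)polar} via the classes $I^{\ell}$, the exclusion of $|M|\le 1$, and the three placement cases for $v_2,v_3$) check out.
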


\begin{proof}
	Let $G = (C, S, I)$ be a $2K_2$-split minimal $(\infty, k)$-polar obstruction,
	and let $c$ and $i$ be the number of vertices in $C$ and $I$, respectively.
	From \Cref{lem:2K2char(inftyK)polar} we have that $i \ge k$. Moreover, since
	$G$ is a minimal $(s, k)$-polar obstruction for some positive integer $s$, we
	have from \Cref{lem:2K2obsminBound3,lem:2K2obsminBound6,lem:2K2obsminBound5},
	that $i \le 2k-2$.

	By the minimality of $G$, for each vertex $x \in C$, $G-x = (C \setminus
	\set{x}, S, I)$ is an $(\infty,k)$-polar graph with at least $k$ vertices in
	its stable part, so it follows from \Cref{lem:2K2char(inftyK)polar} that
	there is a subset $I'_x$ of $I$ with at least $i-k+2$ vertices such that
	$G[I'_x \cup C \setminus \set{x}]$ is a complete multipartite graph.

	We claim that, for any two different vertices $u,v \in C$, if $I'_v$ is a
	subset of $I'_u$, then the neighborhood of each vertex in $I'_v$ is
	precisely $C \setminus \set{u,v}$. Notice that this would imply that there are
	not three vertices $u,v,w \in I$ such that $I'_u = I'_v = I'_w$.

	To prove our claim, suppose that $u$ and $v$ are vertices in $C$ such that
	$I'_v \subseteq I'_u$. Since $G[I'_v \cup C \setminus \set{v}]$ is a complete
	multipartite graph we have two possibilities, either $I'_v$ is completely
	adjacent to $C \setminus \set{v}$ or there is a vertex $w \in C \setminus
	\set{v}$ such that $I'_v$ is completely adjacent to $C \setminus \set{v,w}$ and
	$w$ is completely nonadjacent to $I'_v$. Notice that, regardless of the
	case, since $v \in C \setminus \set{u}$ and $G[I'_u \cup C \setminus \set{u}]$
	is $\overline{P_3}$, $v$ is either completely adjacent or completely
	nonadjacent to $I'_u$, and therefore, $v$ is either completely adjacent or
	completely nonadjacent to $I'_v$. But we have from the previous observation
	that, if $I'_v$ is completely adjacent to $C \setminus \set{v}$, then $G[I'_v
	\cup C]$ is a complete multipartite graph, which implies by
	\Cref{lem:2K2char(inftyK)polar} that $G$ is an $(\infty, k)$-polar graph,
	contradicting the election of $G$.
	
	Thus, $I'_v$ is not completely adjacent to $C \setminus \set{v}$, so there is
	a vertex $w \in C \setminus \set{v}$ such that $I'_v$ is completely adjacent
	to $C \setminus \set{v,w}$ and $w$ is completely nonadjacent to $I'_v$.
	Observe that we have two cases depending on whether $w = u$. Since $G[I'_u
	\cup C \setminus \set{u}]$ is a complete multipartite graph and $I'_v
	\subseteq I'_u$, we have that $G[I'_v \cup C \setminus \set{u}]$ is also a
	complete multipartite graph. Then, if $w \ne u$, we have that $v$ is
	adjacent to every vertex of $I'_v$, but this would imply that $G[I'_v \cup
	C]$ is a complete multipartite graph, and we previously noticed that this is
	impossible. Hence $w = u$ and, since $G[I'_v \cup C \setminus \set{u}]$ is a
	complete multipartite graph but $G[I'_v \cup C]$ is not, we have that $v$ is
	completely nonadjacent to $I'_v$, and it follows that the neighborhood of
	each vertex in $I'_v$ equals $C \setminus \set{u,v}$.

	By our previous arguments, there are at least $\lceil c/2 \rceil$ vertices
	of $u \in C$ whose associated sets $I'_u$ are pairwise different. Therefore,
	since $I'_u \subseteq I$, we have that $\lceil C/2 \rceil \le |\mathcal
	P(I)| = 2^{|I|} \le 2^{2k-2}$, from which we conclude that
	\[ |V_G| = |C| + |S| + |I| \le 2 + 2k + 2^{2k-1}. \]
\end{proof}

\subsubsection{Algorithms for polarity on $2K_2$-split graphs} \label{ss:alg2K2}

We have observed before that $2K_2$-split graphs are unipolar and co-unipolar,
and hence polar graphs. Additionally, we proved that deciding monopolarity and
co-monopolarity in $2K_2$-split graphs can be done in linear time from its
degree sequence. In this section we prove that the problems of deciding whether
a $2K_2$-split graph is $(s, \infty)$-, $(\infty, k)$- or $(s, k)$-polar also
can be efficiently solved.

We start proving that, for any positive integer $s$, the $2K_2$-split graphs
that admit an $(s, \infty)$-polar partition can be recognized in linear time
from their degree sequence.

\begin{theorem} \label{prop:2K2sInftyRecog}
	Let $s$ be an integer, $s \ge 2$. The problem of deciding whether a
	$2K_2$-split graph is $(s, \infty)$-polar is linear-time solvable from its
	degree sequence.
\end{theorem}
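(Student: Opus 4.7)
The plan is to reduce the question to an elementary count on the sorted degree sequence, using the machinery already in place. By \Cref{theo:charHsplit} applied with the realization $2K_2$ (whose degree sequence $1,1,1,1$ is uniquely realizable), we can decide in $O(|V|)$-time from the sorted degree sequence whether $G$ is a $2K_2$-split graph and, if so, recover the cardinalities $c=|C|$, $|S|=4$ and $i=|I|$ of the (then unique) $2K_2$-split partition. If $G$ turns out to be a split graph, it is $(1,1)$-polar and therefore $(s,\infty)$-polar, so we are done; hence we may assume that $G$ is a strict $2K_2$-split graph and $(C,S,I)$ is its unique $2K_2$-split partition.

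Next I invoke \Cref{lem:2K2char(sInfty)polar}: $G$ is $(s,\infty)$-polar if and only if $s\ge c$, or there is a subset $C'\subseteq C$ of size at least $c-s+2$ that is completely nonadjacent to $I$. The first disjunct is settled by a single comparison once $c$ is known. For the second, \Cref{rem:degrees2K2} states that a vertex $u\in V_G$ lies in $C$ and is completely nonadjacent to $I$ if and only if $\Deg{u}=c+3$; therefore the maximum size of such a $C'$ equals the number $N$ of entries of the degree sequence that are equal to $c+3$. So the algorithm is: compute $c$, compare $s$ to $c$, and if $s<c$ scan the (already sorted) degree sequence once, counting the occurrences of the value $c+3$, and accept if and only if $N\ge c-s+2$.

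Each of these steps runs in $O(|V|)$-time: the first step is the recognition from \Cref{theo:charHsplit}, and the degree-value count is a single linear scan (in fact, since the degree sequence is sorted nonincreasingly, it reduces to finding the first and last index with value $c+3$ by binary search or a linear sweep). Correctness follows immediately from \Cref{lem:2K2char(sInfty)polar} together with \Cref{rem:degrees2K2}, since these results guarantee that the set of vertices of $C$ completely nonadjacent to $I$ is read off from the degree sequence exactly as the set of vertices of degree $c+3$.

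The only potential obstacle is to confirm that the degree-based identification of $C$ and of the "nonadjacent to $I$" part of $C$ is really available from the degree sequence alone, rather than requiring access to the adjacency structure. This is exactly what \Cref{rem:degrees2K2} provides, and uniqueness of the $2K_2$-split partition in a strict $2K_2$-split graph (which itself follows from the fact that $2K_2$ is a minimal split obstruction) ensures that the numbers $c$ and $N$ are intrinsic invariants of $G$. With these two facts in hand, nothing beyond a linear scan of the sorted degree sequence is required.
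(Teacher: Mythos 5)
Your proof is correct and follows essentially the same route as the paper: reduce to \Cref{lem:2K2char(sInfty)polar}, identify the vertices of $C$ completely nonadjacent to $I$ as exactly those of degree $c+3$ via \Cref{rem:degrees2K2}, and count them in a linear scan, with \Cref{theo:charHsplit} supplying the partition sizes from the degree sequence. Your explicit dispatch of the non-strict (split) case is a small extra care the paper leaves implicit, but the argument is otherwise the same.
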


\begin{proof}
	Let $G = (C, S, I)$ be a $2K_2$-split graph, and let $c$ and $i$ be the
	number of vertices in $C$ and $I$, respectively. If $c \le s$, then $(C, S
	\cup I)$ is an $(s, \infty)$-polar partition of $G$. Otherwise, we have from
	\Cref{lem:2K2char(sInfty)polar} that $G$ is an $(s, \infty)$-polar graph if
	and only if there exist at least $c-s+2$ vertices of $G$ whose degree is
	exactly $c+3$. By \Cref{theo:charHsplit}, these verifications can be done in
	linear time from the degree sequence of $G$.
\end{proof}

We do not known whether $(\infty, k)$-polarity can be decided in linear time for
$2K_2$-split graphs, but in the next proposition we prove that this problem can
be solved in polynomial time.

\begin{theorem} \label{prop:2K2inftyKRecog}
	Let $k$ be an integer, $k \ge 2$. The problem of deciding whether a
	$2K_2$-split graph is $(\infty, k)$-polar is solvable in quadratic time.
\end{theorem}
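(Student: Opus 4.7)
The plan is to turn \Cref{lem:2K2char(inftyK)polar} into an efficient algorithm by exploiting the rigidity of complete multipartite graphs. First I would compute the $2K_2$-split partition $(C, S, I)$ of $G$ in $O(|V|)$ time via \Cref{theo:charHsplit}; this applies because the degree sequence of $2K_2$ is uniquely realizable. If $|I| \le k-1$, the answer is ``yes'' and we are done, so from now on assume $|I| \ge k$. By \Cref{lem:2K2char(inftyK)polar}, the question reduces to deciding whether there exists a set $I' \subseteq I$ of size at least $|I|-k+2$ such that $G[C \cup I']$ is complete multipartite.

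The key structural observation I will rely on is that, since $I'$ is an independent set, in any complete multipartite structure on $C \cup I'$ all vertices of $I'$ must lie inside a single part; consequently every two vertices of $I'$ must have exactly the same neighborhood in $C$. Hence $I'$ has to be contained in a single class of the partition of $I$ obtained by grouping vertices that share the same neighborhood in $C$. These classes $I_1, \ldots, I_t$ can be produced in $O(|V|^2)$ time by bucketing the vertices of $I$ according to their characteristic vector of adjacencies to $C$. To maximize $|I'|$ it then suffices, for each class $I_j$ with $|I_j| \ge |I|-k+2$, to test whether $G[C \cup I_j]$ is $\overline{P_3}$-free (equivalently, complete multipartite); such a test can be carried out in $O(|V|^2)$ time directly on the adjacency matrix. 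The algorithm answers ``yes'' precisely when some class passes this test.

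The step I expect to require the most care is bounding the number of classes $I_j$ that actually have to be tested, so that the total running time stays quadratic. The count of classes meeting the cardinality threshold is at most $\lfloor |I|/(|I|-k+2)\rfloor$: whenever $|I| \ge 2k-3$ the threshold $|I|-k+2$ exceeds $|I|/2$, so at most one such class can exist, while for $|I| < 2k-3$ both $|I|$ itself and the total number of classes are bounded by a constant depending only on $k$. Combining these two regimes, for each fixed $k \ge 2$ the aggregate cost of the $\overline{P_3}$-free tests is $O(|V|^2)$, which, together with the $O(|V|^2)$ costs of the partitioning and the initial computations, gives the claimed quadratic-time algorithm.
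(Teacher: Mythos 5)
Your proposal is correct and follows essentially the same route as the paper: both reduce the question via \Cref{lem:2K2char(inftyK)polar} to the observation that $I'$ must lie in a single part of the complete multipartite graph, hence all its vertices share the same neighborhood in $C$, which must be either $C$ or $C\setminus\set{v}$ for a single $v$. The paper enumerates these $|C|+1$ candidate neighborhoods directly (via vertex degrees and the sets $I^\ast_v$) rather than bucketing $I$ into neighborhood classes, but the two computations are equivalent and both run in quadratic time.
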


\begin{proof}
	Let $G = (C, S, I)$ be a $2K_2$-split graph, and let $c$ and $i$ be the
	number of vertices in $C$ and $I$, respectively. If $i \le k-1$ and $\set{u,
	v}$ is a maximum clique of $G[S]$, then $(C \cup \set{u, v}, I \cup S
	\setminus \set{u,v})$ is an $(\infty, k)$-polar partition of $G$. Else, if
	the subset $I'$ of all vertices of degree $c$ in $G$ has at least $i-k+2$
	elements, then $(C \cup I', S \cup I \setminus I')$ is an $(\infty,
	k)$-polar partition of $G$. Hence, if $i \le k-1$ or there are at most
	$i-k+2$ vertices of degree $c$ in $G$, then $G$ is an $(\infty, k)$-polar
	graph. Now, let us assume that $i \ge k$ and there are at most $i-k+1$
	vertices of $G$ whose degree is $c$.

	For each vertex $v \in C$, let $I^\ast_v$ be the set of all vertices whose
	neighborhood is $C \setminus \set{v}$. It follows from
	\Cref{lem:2K2char(inftyK)polar} that $G$ is an $(\infty, k)$-polar graph if
	and only if $I^\ast_v$ has at least $i-k+2$ vertices for some $v \in C$. The
	result follows since all the verifications can be performed in quadratic
	time.
\end{proof}

As a consequence of \Cref{rem:degrees,theo:DegreeChar(sk)polIPS}, deciding
whether a pseudo-split graph admits an $(s, k)$-polar partition can be done in
linear time from its degree sequence. In contrast, it cannot be decided in
general whether a $2K_2$-split graph is $(s, k)$-polar only from its degree
sequence. For instance, in \Cref{fig:NoFromDS} are depicted two strict
$2K_2$-split graphs with the same degree sequence such that the left one is $(5,
4)$-polar but the right one is not. Despite of that, through the following
propositions we prove that the problem of recognizing $2K_2$-polar graphs that
admit an $(s, k)$-polar partition is solvable in polynomial time.

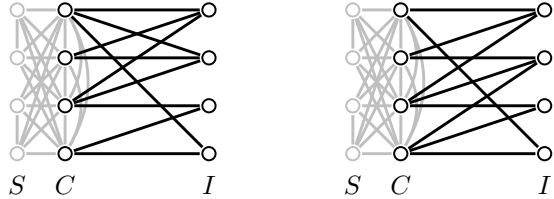
\begin{figure}[!ht]
    \centering
    \begin{tikzpicture}[scale=0.85]

    \begin{scope}[scale=0.75]

    \begin{scope}[xshift=0cm, yshift=0cm]
        \foreach \i in {0,...,3}
            \node (-\i) [vertex, lightgray] at (-1, \i){};
        \foreach \i in {0,...,3}
            \node (0\i) [vertex] at (0, \i){};
        \foreach \i in {0,...,3}
            \node (3\i) [vertex] at (3, \i){};

        \node (x) [rectangle] at (-1,-0.65){$S$};
        \node (x) [rectangle] at (0,-0.65){$C$};
        \node (x) [rectangle] at (3,-0.65){$I$};
    \end{scope}

    \foreach \i in {-0,-1,-2,-3}
        \foreach \j in {00,01,02,03}
            \draw  [edge, lightgray] (\i) to node [above] {} (\j);	

    \draw  [edge, lightgray] (-0) to node [above] {} (-1);	
    \draw  [edge, lightgray] (-2) to node [above] {} (-3);

    \foreach \i/\j in {03/01,03/00,02/00}
        \draw  [bentE, lightgray] (\i) to node [above] {} (\j);
    \foreach \i/\j in {03/02,02/01,01/00}
        \draw  [edge, lightgray] (\i) to node [above] {} (\j);

    \foreach \i in {1,2,3}
        \draw  [edge] (33) to node [above] {} (0\i);	
    \foreach \i in {1,2,3}
        \draw  [edge] (32) to node [above] {} (0\i);	
    \foreach \i in {0,1}
        \draw  [edge] (31) to node [above] {} (0\i);	
    \foreach \i in {0,3}
        \draw  [edge] (30) to node [above] {} (0\i);	

    \begin{scope}[xshift=7cm, yshift=0cm]
        \foreach \i in {0,...,3}
            \node (-\i) [vertex, lightgray] at (-1, \i){};
        \foreach \i in {0,...,3}
            \node (0\i) [vertex] at (0, \i){};
        \foreach \i in {0,...,3}
            \node (3\i) [vertex] at (3, \i){};

        \node (x) [rectangle] at (-1,-0.65){$S$};
        \node (x) [rectangle] at (0,-0.65){$C$};
        \node (x) [rectangle] at (3,-0.65){$I$};
    \end{scope}

    \foreach \i in {-0,-1,-2,-3}
        \foreach \j in {00,01,02,03}
            \draw  [edge, lightgray] (\i) to node [above] {} (\j);	

    \draw  [edge, lightgray] (-0) to node [above] {} (-1);	
    \draw  [edge, lightgray] (-2) to node [above] {} (-3);	

    \foreach \i/\j in {03/01,03/00,02/00}
        \draw  [bentE, lightgray] (\i) to node [above] {} (\j);
    \foreach \i/\j in {03/02,02/01,01/00}
        \draw  [edge, lightgray] (\i) to node [above] {} (\j);

    \foreach \i in {1,2,3}
        \draw  [edge] (33) to node [above] {} (0\i);	
    \foreach \i in {0,1,2}
        \draw  [edge] (32) to node [above] {} (0\i);	
    \foreach \i in {0,1}
        \draw  [edge] (31) to node [above] {} (0\i);	
    \foreach \i in {0,3}
        \draw  [edge] (30) to node [above] {} (0\i);

    \end{scope}

    \end{tikzpicture}

    \caption{Two $2K_2$-split graphs with the same degree sequence such that the
    one on the left side is $(5,4)$-polar but the one on the right side is not.}
    \label{fig:NoFromDS}
\end{figure}

\begin{lemma} \label{lem:kClusterSplit}
	Let $G = (S, K)$ be a split graph, and let $k$ be a positive integer. Let
	$S'$ be the set of all vertices in $S$ which are completely nonadjacent to
	$K$. Then, $G$ is a $k$-cluster if and only if the following sentences hold
	true.
	\begin{enumerate}
		\item For each vertex $w \in S$, either $N(w) = \varnothing$ or $N(w) =
		K$.

		\item $|S \setminus S'| \le 1$.

		\item If $K \ne \varnothing$, then $|S'| \le k-1$. Otherwise $|S'| \le
		k$.
	\end{enumerate}

	Consequently, it can be decided whether a split graph is a $k$-cluster in
	linear time from its degree sequence.
\end{lemma}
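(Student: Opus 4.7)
My plan is to prove the biconditional by using the equivalent characterization of a $k$-cluster as a disjoint union of at most $k$ cliques (equivalently, a $\set{P_3, (k+1)K_1}$-free graph). Since $S$ is independent and $K$ is a clique, the only freedom lies in how vertices of $S$ attach to $K$, so most of the structural work reduces to analysing this attachment.

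For the forward direction (conditions 1--3 are necessary), I would argue as follows. If some $w \in S$ had both a neighbor $u \in K$ and a non-neighbor $v \in K$, then $\set{v, u, w}$ would induce a $P_3$, contradicting $P_3$-freeness; this yields condition~1. For condition~2, I would observe that if two distinct $w_1, w_2 \in S \setminus S'$ existed, then by condition~1 both would be completely adjacent to $K$ (so $K \ne \varnothing$), and any $u \in K$ would produce an induced $P_3$ on $\set{w_1, u, w_2}$. For condition~3, I would use condition~1 to conclude that every vertex of $S'$ is isolated in $G$, hence its own component; when $K \ne \varnothing$, the vertices of $K$ together with the at most one vertex of $S \setminus S'$ (which is adjacent to all of $K$) form a single additional clique component, giving $1 + |S'| \le k$; when $K = \varnothing$, the graph is $|S'|K_1$, so $|S'| \le k$.

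For the converse, the plan is to show that conditions 1--3 force the desired decomposition directly. Conditions 1 and 2 together ensure that at most one vertex $w^\ast$ of $S$ satisfies $N(w^\ast) = K$, while every other vertex of $S$ is isolated. Thus $G$ is the disjoint union of the isolated vertices in $S'$ together with the clique $G[K \cup \set{w^\ast}]$ (the latter being present only when $K \ne \varnothing$), and condition~3 caps the number of components by $k$. The main (mild) obstacle I anticipate is bookkeeping the edge case $K = \varnothing$: there is then no ``big'' clique component and $S' = S$, which is precisely why condition~3 splits into two subcases.

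For the algorithmic consequence, I plan to translate conditions 1--3 into checks on the degree sequence. After computing $p = |K|$ in $O(|V|)$-time via \Cref{theo:recognitionSplit}, the sorted degree sequence identifies the vertices of $S$ as the last $n - p$ entries, all of whose degrees lie in $\set{0, 1, \dots, p}$. Condition~1 then amounts to: no such entry lies strictly between $0$ and $p$; condition~2 amounts to: at most one such entry equals $p$; condition~3 amounts to counting how many such entries are $0$ and comparing that count with $k-1$ or $k$ depending on whether $p \ge 1$. Each of these verifications clearly runs in $O(|V|)$-time.
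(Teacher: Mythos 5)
Your proof is correct and takes essentially the same approach as the paper: a direct analysis of how $S$-vertices attach to the clique $K$, yielding the component structure $1+|S'|$, followed by a translation into degree-sequence checks. The only cosmetic difference is in the algorithmic part, where the paper exploits maximality of $K$ in the canonical partition to reduce everything to checking $d_1=\dots=d_p=p-1$ plus a component count, while you verify the three conditions literally on the degrees of the $S$-vertices; both run in $O(|V|)$-time.
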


\begin{proof}
	The proposition can be easily verified if $K = \varnothing$, so we will assume
	for the proof that $K \ne \varnothing$. Notice that $K \cup S \setminus S'$
	induces a component of $G$ and the other components of $G$ are trivial graphs
	induced by the singletons $\set{w}$ such that $w\in S'$. In consequence, $G$
	has exactly $1 + |S'|$ components. 

	Assume that $G$ is a $k$-cluster. Since $G$ is $P_3$-free, any vertex $w \in
	S$ is either completely adjacent or completely nonadjacent to $K$, and there
	is at most one vertex of $S$ that is not an isolated vertex. Moreover, since
	$K \ne \varnothing$, it follows from our initial observation about the
	components that $|S'| \le k-1$. Therefore, if $G$ is a $k$-cluster, the three
	listed conditions hold. The converse implication follows follows from the
	observations in the first paragraph of this proof and the third statement.
	
	For the last part, suppose that $G$ has degree sequence $d_1 \ge \dots \ge
	d_n$, and let $p = \max \set{i \mid d_i \ge i-1}$. We have from
	\Cref{theo:recognitionSplit} that $(S, K)= (\set{v_{p+1}, \dots,v_n},
	\set{v_1, \dots,v_p})$ is a split partition of $G$ such that $K$ is a maximum
	clique. Then, we have from the characterization above that $G$ is a
	$k$-cluster if and only if $G$ has at most $k$ components and $S$ is
	completely nonadjacent to $K$. For the first condition, notice that $G$ has at
	most $k$ connected components if and only if either $p = 1$ and $n \le k$ or
	$p > 1$ and $n-p \le k-1$, and this can be checked in linear time. The second
	condition is satisfied if and only if $d_1 = \dots = d_p = p-1$, which also
	can be verified in linear time.
\end{proof}

\begin{lemma} \label{lem:(sk)polSplit}
	Let $s$ and $k$ be nonnegative integers such that $s+k \ge 1$. It can be
	decided whether a split graph is $(s, k)$-polar in linear time from its degree
	sequence.
\end{lemma}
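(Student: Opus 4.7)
The plan is to split into three cases according to whether $s$ and $k$ are positive, reducing everything to the split recognition of \Cref{theo:recognitionSplit} and the $k$-cluster recognition of \Cref{lem:kClusterSplit}.

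First, if $s \ge 1$ and $k \ge 1$, then any split graph $G$ is automatically $(s,k)$-polar. Indeed, a split partition $(I, K)$ of $G$ is a $(1,1)$-polar partition: $I$ induces a complete $1$-partite graph and $K$ induces a single clique. Since complete $1$-partite graphs are complete $s'$-partite for $s' \le s$ and a single clique is a disjoint union of at most $k$ cliques whenever $k \ge 1$, the partition $(I,K)$ is $(s,k)$-polar. By \Cref{theo:recognitionSplit} we can even produce it in linear time, so in this case the algorithm simply answers YES.

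Second, if $s = 0$ (so $k \ge 1$), then an $(s,k)$-polar partition $(A, B)$ forces $A = \varnothing$ and $B = V_G$, so $G$ itself must be a $k$-cluster. By \Cref{lem:kClusterSplit}, this can be decided in linear time from the degree sequence of $G$.

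Third, if $k = 0$ (so $s \ge 1$), then by a complementary argument $G$ is $(s,0)$-polar if and only if $\overline{G}$ is $(0,s)$-polar, i.e., $\overline{G}$ is an $s$-cluster. Since the complement of a split graph is again split, and the degree sequence of $\overline{G}$ is obtained from that of $G$ by the transformation $d \mapsto n-1-d$ (reversing the order), it can be produced in linear time; \Cref{lem:kClusterSplit} applied to $\overline{G}$ then finishes the case in linear time.

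These three cases together cover all pairs $(s,k)$ with $s+k \ge 1$, and each step runs in linear time in the length of the degree sequence, so the overall algorithm is linear. There is no real obstacle here: the content has already been done in \Cref{lem:kClusterSplit}, and the only thing to check carefully is that when both $s$ and $k$ are positive the trivial split partition is an $(s,k)$-polar partition, which is immediate from the definitions.
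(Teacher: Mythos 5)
Your proof is correct and takes essentially the same route as the paper: when $s,k\ge 1$ the answer is an immediate YES because split graphs are $(1,1)$-polar, and the degenerate cases $s=0$ and $k=0$ reduce to \Cref{lem:kClusterSplit} (the latter via complementation of the split graph and its degree sequence, which you spell out more explicitly than the paper does). No gaps.
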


\begin{proof}
	Let $G$ be a split graph. Since split graphs are precisely the $(1,1)$-polar
	graphs, if $s$ and $k$ are both positive integers, then $G$ is $(s,k)$-polar.
	Otherwise, $s = 0$ or $k = 0$, and this case follows from
	\Cref{lem:kClusterSplit}.
\end{proof}

As we mentioned before, it is known that $(s,k)$-polar graphs can be recognized
in polinomial time \cite{feder2007matrix,feder2003list}. Next, we present an
alternative polinomial-time algorithm to recognize $(s,k)$-polar graphs on the
class of $2K_2$-split graphs.

\begin{theorem} \label{thm:2K2sKrecog}
	Let $s$ and $k$ be nonnegative integers such that $s+k \ge 1$. Deciding
	whether a $2K_2$-split graph is $(s, k)$-polar can be done in
	polynomial-time.
\end{theorem}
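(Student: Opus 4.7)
The plan is to reduce the problem to the case analysis already carried out in Theorem \ref{thm:2K2Char(sk)pol}. First I compute a $2K_2$-split partition $(C,S,I)$ of the input graph $G$; by Theorem \ref{theo:charHsplit} this can be done in linear time from the degree sequence. If $S=\varnothing$, then $G$ is a split graph and I apply Lemma \ref{lem:(sk)polSplit}. So I may assume from now on that $G$ is a strict $2K_2$-split graph with $c=|C|$ and $i=|I|$.

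I next deal with the small values of $s$ and $k$ separately. If $s=0$ then $G$ must be a $k$-cluster, which is impossible whenever $G$ is strict $2K_2$-split with $C\neq\varnothing$ (the graph contains an induced $P_3$); the remaining corner subcase (and the symmetric $k=0$ case) reduces trivially to checking the sizes of $C$, $S$, $I$. For $s=1$, Theorem \ref{theo:2K2(1k)obsmin} gives a constant-size forbidden subgraph list that is checked in polynomial time, and the symmetric case $k=1$ is handled by Lemma \ref{lem:(sk)polSplit} together with the fact that a unipolar $2K_2$-split graph is known. From here I assume $s,k\ge 2$.

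With $s,k\ge 2$ the eight statements of Theorem \ref{thm:2K2Char(sk)pol} partition the parameter space according to the comparisons between $c$ and $s$ and between $i$ and $k$, and they cover every possibility (items 1--2 are immediate YES-instances, item 3 is an immediate NO-instance). For each of items 4--8 the remaining work is to test the existence of a set $C'\subseteq C$ with prescribed cardinality completely nonadjacent to $I$, and/or a set $I'\subseteq I$ with prescribed cardinality enjoying one of two adjacency patterns with $C$. These are all checkable in polynomial time: let $C^\ast$ be the set of vertices of $C$ with no neighbor in $I$, and for every $v\in C$ let $I^\ast_v$ be the set of vertices of $I$ whose neighborhood in $C$ equals $C\setminus\set{v}$, together with $I^{\ast\ast}$ the set of vertices of $I$ completely adjacent to $C$. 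Computing these sets takes $O(|V|^2)$ time by scanning the edges between $C$ and $I$; once they are available, each subset-existence condition of Theorem \ref{thm:2K2Char(sk)pol} reduces to a comparison between $|C^\ast|$, $|I^{\ast\ast}|$ or $\max_{v\in C}|I^\ast_v|$ and the appropriate threshold ($c-s+2$ or $i-k+2$, with minor variants in items 7 and 8).

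I do not foresee a serious obstacle: the entire argument is bookkeeping on top of Theorem \ref{thm:2K2Char(sk)pol}. The only point that requires a moment's care is ensuring that the many subcases of items 7 and 8 are translated faithfully into the quantities $C^\ast$, $I^\ast_v$, $I^{\ast\ast}$ (in particular, item 8(b)(i) requires $I^{\ast\ast}\neq\varnothing$ while 8(b)(ii) requires some $I^\ast_v\neq\varnothing$, and 8(a) requires $C^\ast\neq\varnothing$). Since each case breakdown is explicit and finite, and the computation of $C^\ast$, $I^{\ast\ast}$, and all the $I^\ast_v$ is clearly polynomial, the theorem follows.
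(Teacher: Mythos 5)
Your proposal is correct and follows essentially the same route as the paper's proof: dispose of the split and degenerate cases first, then for $s,k\ge 2$ run through the case analysis of \Cref{thm:2K2Char(sk)pol}, reducing each subset-existence condition to counting vertices with prescribed neighborhoods (your $C^\ast$, $I^{\ast\ast}$ and $I^\ast_v$ are exactly the quantities the paper computes, in quadratic time). The only blemish is your appeal to ``unipolarity'' for the case $k=1$, $s\ge 2$ --- the paper instead notes that $2K_2+K_1$ is an induced subgraph whenever $I\ne\varnothing$ and is not $(s,1)$-polar, with the $I=\varnothing$ subcase handled separately --- but this is an easily repaired slip, not a gap.
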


\begin{proof}
	Let $G = (C, S, I)$ be a $2K_2$-split graph, and let $c$ and $i$ be the
	number of vertices in $C$ and $I$, respectively. Let us denote by $I^\ast$
	the set of all vertices of $G$ of degree $c-1$ and, for each vertex $v$ in
	$C$, let $I^\ast_v$ be the set of all vertices $w \in I$ such that $N(w) = C
	\setminus \set{v}$.
		
	From \Cref{lem:(sk)polSplit} we have the result for the case in which
	$S=\varnothing$, so we can assume that $G$ is a strict $2K_2$-split graph.
	In addition we have the following particular cases.
	\begin{enumerate}
		\item $2K_2$ is $(0,2)$- and $(2,1)$-polar but it is neither $(1,1)$-
		nor $(\infty,0)$-polar.

		\item $K_1\oplus 2K_2$ is $(1,2)$- and $(2,1)$-polar but it is not
		$(\infty,0)$-, $(0, \infty)$-, or $(1,1)$-polar.

		\item For $c\ge 2$, $K_c\oplus 2K_2$ is $(2,1)$-polar but it is neither
		$(1, \infty)$- nor $(\infty,0)$-polar.

		\item For $c\ge 1$, $iK_1 + 2K_2$ is $(0,i+2)$- and $(1,2)$-polar but it
		is neither $(0,i+1)$- nor $(\infty,1)$-polar.
	\end{enumerate}
	These cases correspond to the conditions $C = \varnothing$ or $I =
	\varnothing$, that can be checked in linear time from the degree sequence of
	$G$ from \Cref{theo:charHsplit}.

	From the above observations, we can assume for the rest of the proof that
	the sets $C, S$ and $I$ are all of them nonempty. We consider the following
	particular cases.

	\begin{itemize}
		\item If $s, k \le 1$, then $G$ is not $(s, k)$-polar, because $2K_2\le
		G$.

		\item If $k=0$, then $G$ is not a $(s, k)$-polar, because $2K_2\le G$.
			
		\item If $s=0$ and $k \ge 2$, then $G$ is not $(s, k)$-polar, because
		$K_1 \oplus 2K_2 \le G$.

		\item If $k=1$ and $s\ge 2$, then $G$ is not $(s, k)$-polar, because
		$2K_2+K_1 \le G$.

		\item If $s=1$ and $k \ge 2$, we have from \Cref{theo:2K2(1k)obsmin}
		that $G$ is an $(s, k)$-polar graph if and only if $c=1$ and $|\set{w
		\in I : \Deg{w}>0}| \le k-2$. This condition can be verified from
		the degree sequence of $G$ in linear time.
	\end{itemize}

	Notice that, if none of the cases listed before occurs, then $s, k \ge 2$,
	so we can use the characterizations provided by \Cref{thm:2K2Char(sk)pol}.
	The following cases are based on the that characterizations.

	\begin{enumerate}
		\item If $c\le s$ and $i\le k-2$, then $G$ is an $(s, k)$-polar graph.

		\item If $c\le s-2$ and $i\le k-1$, then $G$ is an $(s, k)$-polar graph.

		\item If $c\ge s+1$ and $i\ge k$, then $G$ is not an $(s, k)$-polar graph.

		\item If $c>s$ and $i<k$, then $G$ is an $(s, k)$-polar graph if and only if
		there exist at least $c-s+2$ vertices whose degree is exactly $c+3$. This
		condition can be verified from the degree sequence of $G$ in linear time.

		\item If $c<s$ and $i\ge k$. We can verify from the degree sequence of $G$
		if there exist at least $i-k+2$ vertices of degree exactly $c$; if such
		vertices exist $G$ is an $(s, k)$-polar graph. Otherwise, if
		$|I^\ast|<i-k+2$, $G$ is not an $(s, k)$-polar graph, and if $|I^\ast|\ge
		i-k+2$ we can check, for each vertex $v\in C$, whether the set $I^\ast_v$
		has at least $i-k+2$ vertices; in this point $G$ is an $(s, k)$-polar graph
		if and only if $|I^\ast_v|\ge i-k+2$ for some $v\in C$. These verifications
		can be done in polynomial time.

		\item If $c=s$ and $i\ge k$. If $|I^\ast|<i-k+2$, $G$ is not an $(s,
		k)$-polar graph. Otherwise, if $|I^\ast|\ge i-k+2$, we can check for each
		vertex $v\in C$ whether the set $I^\ast_v$ has at least $i-k+2$ vertices;
		$G$ is an $(s, k)$-polar graph if and only if $|I^\ast_v|\ge i-k+2$ for some
		$v\in C$.

		\item If $c=s$ and $i = k-1$. If there exist at least two vertices of degree
		exactly $c+3$, $G$ is an $(s, k)$-polar graph. Otherwise, if
		$I^\ast=\varnothing$, $G$ is not an $(s, k)$-polar graph, and if $I^\ast\ne
		\varnothing$, we can check for each vertex $v\in C$ whether the set
		$I^\ast_v$ is empty; $G$ is an $(s, k)$-polar graph if and only if
		$I^\ast_v\ne \varnothing$ for some $v\in C$.

		\item If $c=s-1$ and $i = k-1$. First, if there exists a vertex of degree
		$c+3$ or a vertex of degree $c$, $G$ is an $(s, k)$-polar graph. Otherwise,
		if $I^\ast=\varnothing$, $G$ is not an $(s, k)$-polar graph, and if
		$I^\ast\ne \varnothing$, we can check for each vertex $v\in C$ whether the
		set $I^\ast_v$ is empty; $G$ is an $(s, k)$-polar graph if and only if
		$I^\ast_v\ne \varnothing$ for some $v\in C$.
	\end{enumerate}

	The result follows since all verifications can be performed in
	polynomial-time.
\end{proof}

\section{Concluding remarks} \label{sec:concl}

It is worth noticing that, unlike the upper bound for the order of $2K_2$-split
minimal $(s, \infty)$-polar obstructions provided in
\Cref{theo:uBound2K2sSinfty}, which is linear on $s$, the bound given in
\Cref{theo:uBound2K2sInftyK} for the order of $2K_2$-split minimal $(\infty,
k)$-polar obstructions is exponential on $k$. Moreover, we know that the first
of these bounds is tight, but we cannot guarantee the same for the second one.
We pose the following question.

\begin{problem} \label{prob:improveBound}%
  Can \Cref{lem:2K2obsminBound6,lem:2K2obsminBound5} be improved by replacing
	the condition $|I| \ge 2k-1$ for a stronger one like $|I| \ge k+c_0$, for a
	constant $c_0$?
\end{problem}

Notice that, from the proof used for \Cref{theo:uBound2K2sInftyK}, an
affirmative answer to \Cref{prob:improveBound} would imply an improvement of the
bound provided in the mentioned theorem. Nevertheless, the next observation
makes us think the answer to \Cref{prob:improveBound} could be negative.

\begin{remark}
	For any integers $s$ and $k$, $s, k \ge 2$, the strict $2K_2$-split graph $G =
  (C, S, I)$ with $C = \set{w}$, $I = \set{i_1, \dots, i_{2k-2}}$, and such that
  $wi_j\in E$ if and only if $1 \le j \le k-1$, is a minimal $(s, k)$-polar
  obstruction, and hence it is a minimal $(\infty, k)$-polar obstruction.
\end{remark}

We think that the next question can be answered in an affirmative way by
imitating the proof of \Cref{theo:uBound2K2sSinfty}, which is very different
than the one we used in \Cref{theo:uBound2K2sInftyK}.

\begin{problem}
	Is the order of the $2K_2$-split minimal $(\infty, k)$-polar obstructions
	upper bounded by a function linear on $k$?
\end{problem}

Moreover, some initial explorations allow us to pose the following conjecture.

\begin{conjecture}
	Let $k$ be an integer, $k \ge 3$, and let $G = (C, S, I)$ be a $2K_2$-split
	minimal $(\infty,k)$-polar obstruction. Then $k \le i \le 2k-2$ and $c \le
	2k-i-1$, where $c$ and $i$ stands for the number of vertices in $C$ and $I$,
	respectively.
\end{conjecture}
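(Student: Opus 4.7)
The plan is to refine the counting in the proof of \Cref{theo:uBound2K2sInftyK}. Set $M = i-k+1$ and $L = M+1$, and for $x \in I$ define $n(x) = C \setminus N(x)$ and $N_T = \set{x \in I : n(x) = T}$ for $T \subseteq C$; abbreviate $a = |N_\emptyset|$ and $b_w = |N_{\set{w}}|$. From \Cref{lem:2K2char(inftyK)polar} applied to $G$, which is not $(\infty,k)$-polar, one gets $a \le M$ and $b_w \le M$ for every $w \in C$, while $i \ge k$ is immediate from the same lemma. For $i \le 2k-2$, minimality at each $x \in I$ yields (via the lemma applied to $G-x$) a subset $I^x \subseteq I \setminus \set{x}$ of size $\ge M$ contained in $N_\emptyset$ or some $N_{\set{w}}$; since individual sizes are $\le M$, this containing class must be ``maxed'' (size exactly $M$) and miss $x$. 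A single maxed class would fail this for $x$ inside it, so at least two classes $N_T$ achieve size $M$; summing disjointly gives $i \ge 2M$, i.e.\ $i \le 2k-2$. Let $\mathcal{W} = \set{w \in C : b_w = M}$; the previous argument yields $\mathcal{W} \ne \varnothing$, with $|\mathcal{W}| \ge 2$ when $\emptyset$ is not a maxed class.

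For the bound $c \le 2k-1-i$, I would apply minimality at each $u \in C$: the lemma on $G - u$ produces $I^u \subseteq I$ of size $\ge L$ in either $N_\emptyset \cup N_{\set{u}}$ (case A, giving $a + b_u \ge L$) or in $N_{\set{w}} \cup N_{\set{u,w}}$ for some $w \ne u$ (case B, giving $b_w + |N_{\set{u,w}}| \ge L$). The crucial observation is that $w \notin \mathcal{W}$ implies $b_w \le M-1$, which forces $|N_{\set{u,w}}| \ge L - (M-1) = 2$. Fix $w_0 \in \mathcal{W}$ and partition $C \setminus \set{w_0}$ into $\alpha$ vertices in case A, $\beta$ vertices in case B with $w(u) = w_0$, and $\gamma$ vertices in case B with $w(u) \ne w_0$. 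In the principal case $a = M$: case A gives $\sum_{u \ne w_0} b_u \ge \alpha$ (since each $b_u \ge 1$); case B with $w(u) = w_0$ contributes $\ge \beta$ to $\sigma_2 := \sum_{|T|=2}|N_T|$ via pairwise distinct $2$-sets $\set{u, w_0}$; case B with $w(u) \ne w_0$ contributes $\ge 2(\gamma - q)$ to $\sigma_2$, where $q \le \gamma/2$ counts mutually-paired choices (two case-B vertices sharing the same $2$-set, counted once). Combining with $i = a + \sum_w b_w + \sum_{|T| \ge 2}|N_T| \ge 2M + \alpha + \beta + 2(\gamma - q)$ and using $2\gamma - 2q \ge \gamma$ gives $\alpha + \beta + \gamma \le i - 2M = 2k-2-i$, whence $c - 1 \le 2k-2-i$, i.e.\ $c \le 2k-1-i$.

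Subcases require minor adjustments: when $a < M$, case-A vertices of $C \setminus \mathcal{W}$ satisfy the stronger $b_u \ge L-a \ge 2$, absorbing the missing ``$a = M$'' contribution; when further $a = 0$, the vertices of $\mathcal{W}$ themselves cannot satisfy case A and must use case B, supplying a $+1$ contribution to $\sigma_2$ that exactly closes the bound. When three or more classes are maxed, the inequality $i \ge 3M$ and the same counting give $c \le 3k - 2i - 1 \le 2k-1-i$ (using $i \ge k$). The principal obstacle is organizing these configurations uniformly and correctly tracking the paired-mutual-choice correction $q$; the decisive ingredient is the ``$\ge 2$ for case B outside $\mathcal{W}$'' bound, which is what replaces the exponential $2^{2k-1}$ of \Cref{theo:uBound2K2sInftyK} with the conjectured linear-in-$k$ bound.
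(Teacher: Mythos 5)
First, note that the statement you are proving is left as a \emph{conjecture} in the paper --- the authors give no proof, so your attempt cannot be measured against one. Your overall strategy (reading off, from \Cref{lem:2K2char(inftyK)polar} applied to $G$ and to each $G-v$, linear constraints on the class sizes $a=|N_\emptyset|$, $b_w=|N_{\set{w}}|$, $|N_T|$ and then counting $i$ against them) is the natural one, and your first half is essentially correct: $i\ge k$ is immediate, and for $i\ge k+1$ the deletion condition at each $x\in I$ forces at least two disjoint classes of size exactly $M=i-k+1$, giving $i\ge 2M$, i.e.\ $i\le 2k-2$ (the case $i=k$ being trivial). The translation of the deletion condition at $u\in C$ into your cases A and B is also correct, since the classes of $G-u$ are $N_\emptyset\cup N_{\set{u}}$ and $N_{\set{w}}\cup N_{\set{u,w}}$.

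The proof of $c\le 2k-i-1$, however, is a sketch with genuine gaps. (i) When $i=k$ the vertices of $I$ give no information ($G-x$ is automatically polar), so there need not be any maxed class at all and $\mathcal{W}$ can be empty (e.g.\ $C=\set{u_1,u_2}$, $N_{\set{u_1,u_2}}$ of size $2$, remaining $k-2$ vertices of $I$ nonadjacent to $C$); your argument, which opens by fixing $w_0\in\mathcal{W}$, does not apply to this case and it must be handled by a separate (if easier) count. (ii) Your ``$\ge 2(\gamma-q)$'' estimate silently assumes $w(u)\notin\mathcal{W}$ for every case-B vertex with $w(u)\ne w_0$; if $w(u)\in\mathcal{W}\setminus\set{w_0}$ you only get $|N_{\set{u,w(u)}}|\ge 1$, and the missing unit has to be charged to the surplus $M$ contributed by that extra maxed class --- a charge that must also cover the case-A vertices of $\mathcal{W}$ and the mutual pairs inside $\mathcal{W}$ without double-spending. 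Your remark that ``three or more maxed classes'' yields $c\le 3k-2i-1$ is asserted, not derived, and does not substitute for this bookkeeping. (iii) The subcases $a<M$ and $a=0$ are not minor adjustments: when $a<M$ the base quantity $2M$ can no longer be taken as $a+b_{w_0}$ and must instead come from two maxed singleton classes $b_{w_0}=b_{w_1}=M$, which changes which vertex of $C$ is excluded from the count and how $w_1$ itself is paid for (via $a\ge 1$ if it is in case A, via a $2$-set otherwise); and the claim that a ``$+1$ contribution exactly closes the bound'' when $a=0$ is not justified. I believe all of these can be repaired --- I checked that a careful disjoint-charging argument closes each of them, and the bound is tight (e.g.\ the ``star'' obstruction with $i=k$, $c=k-1$) --- but as written the argument is not a proof, and since the target is an open conjecture the missing cases cannot be waved away.
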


Notice that, if the previous conjecture is true, then the order of every
$2K_2$-split minimal $(\infty, k)$-polar obstruction does not exceed $2k+3$. In
addition, such a bound would be tight since the strict $2K_2$-split graph with
$C = \set{c_1, \dots, c_{k-1}}$, $I = \set{i_1, \dots, i_k}$ and such that, for
each $j \in \set{1, \dots, k-1}$, $N(c_j) \cap I = I \setminus \set{i_j}$, is a
minimal $(\infty, k)$-polar obstruction for every integer $k \ge 2$.

\vspace{2mm}

\end{document}